\newcommand*{\arxiv}[1]{\href{https://arxiv.org/abs/#1}{arXiv:#1}}
\newcommand*{\msc}[2][]{\href{https://mathscinet.ams.org/mathscinet/search/mscdoc.html?code=#2,(#1)}{Primary: #2\ifthenelse{\isempty{#1}}{}{; Secondary: #1}.}}
\protected\def\tikz@nonactivecolon{\ifmmode\mathrel{\mathop\ordinarycolon}\else:\fi}
\apptocmd{\sloppy}{\hbadness 10000\relax}{}{}
\theoremstyle{definition}
\newtheorem{defi}{Definition}[section]
\crefname{defi}{Definition}{Definitions}
\theoremstyle{plain}
\newtheorem{lemm}[defi]{Lemma}
\crefname{lemm}{Lemma}{Lemmata}
\newtheorem{theo}[defi]{Theorem}
\crefname{theo}{Theorem}{Theorems}
\newtheorem{coro}[defi]{Corollary}
\newtheorem{prop}[defi]{Proposition}
\newtheorem{maintheo}{Theorem}
\crefname{maintheo}{Theorem}{Theorems}
\crefname{lemmenum}{Lemma}{Lemmata}
\theoremstyle{remark}
\newtheorem{rema}[defi]{Remark}
\crefname{rema}{Remark}{Remarks}
\newtheorem{exam}[defi]{Example}
\newtheoremstyle{maintheorem}{}{}{\itshape}{}{\bfseries}{}{.5em}{#1 \!\thmnote{#3}.}
\theoremstyle{maintheorem}
\newcommand*{\ZZ}{\mathbb{Z}}
\newcommand*{\QQ}{\mathbb{Q}}
\newcommand*{\RR}{\mathbb{R}}
\newcommand*{\CC}{\mathbb{C}}
\newcommand*{\F}{\mathfrak{F}}
\newcommand*{\DC}{\mathcal{D}}
\newcommand*{\SC}{\mathcal{S}}
\newcommand*{\EC}{\mathcal{E}}
\newcommand*{\AC}{\mathcal{A}}
\newcommand*{\FC}{\mathcal{F}}
\let\le\leqslant
\let\ge\geqslant
\let\phi\varphi
\DeclareMathOperator{\rk}{rk}
\DeclareMathOperator{\Tor}{Tor}
\DeclareMathOperator{\Ext}{Ext}
\DeclareMathOperator{\Endd}{End}
\DeclareMathOperator{\supp}{supp}
\DeclareMathOperator{\Ore}{Ore}
\DeclareMathOperator{\prd}{pd}
\DeclareMathOperator{\fld}{fd}
\DeclareMathOperator{\Hom}{Hom}
\DeclareMathOperator{\Mat}{Mat}
\DeclareMathOperator{\HC}{\mathcal{H}}
\DeclareMathOperator{\VCyc}{VCyc}
\DeclareMathOperator{\Triv}{Tr}
\DeclareMathOperator{\KB}{\mathbf{K}}
\DeclareMathOperator{\pt}{pt}
\DeclareMathOperator{\Or}{Or}
\DeclareMathOperator{\KBB}{\mathbb{K}^{-\infty}}
\newcommand*{\UC}{\mathcal{U}}
\newcommand*{\NC}{\mathcal{N}}
\begin{document}

\title{Pseudo-Sylvester domains and skew Laurent polynomials over firs}

\author{Fabian Henneke}
\email{\href{mailto:henneke@uni-bonn.de}{henneke@uni-bonn.de}}
\address{Max-Planck-Institut für Mathematik, Vivatsgasse 7, 53111 Bonn, Germany}

\author{Diego L\'opez-\'Alvarez}
\email{\href{mailto:diego.lopez@icmat.es}{diego.lopez@icmat.es}}
\address{Instituto de Ciencias Matem\'aticas (ICMAT), Nicol\'as Cabrera 13-15, Campus de Cantoblanco UAM, 28049 Madrid, Spain}

\subjclass[2010]{\msc[16E30, 19A31, 16K40]{16E60}}

\begin{abstract}
    Building on recent work of Jaikin-Zapirain, we provide a homological criterion for a ring to be a pseudo-Sylvester domain, that is, to admit a division ring of fractions over which all stably full matrices become invertible.
    We use the criterion to study skew Laurent polynomial rings over free ideal rings (firs).

    As an application of our methods, we prove that crossed products of division rings with free-by-\{infinite cyclic\} and surface groups are pseudo-Sylvester domains unconditionally and Sylvester domains if and only if they admit stably free cancellation. This relies on the recent proof of the Farrell--Jones conjecture for normally poly-free groups and extends previous results of Linnell--Lück and Jaikin-Zapirain on universal localizations and universal fields of fractions of such crossed products.
\end{abstract}

\maketitle
\tableofcontents

\section*{Introduction}

Given a domain $R$, i.e., a not necessarily commutative ring without non-trivial zero divisors, it is natural to ask whether there exists a division ring $\DC$ in which $R$ can be embedded. In the commutative case, the existence of the field of fractions of $R$ settles the question, but in the non-commutative setting, a division ring with the desired property may not exist in general (\cite{Malcev1937}).

It was P.\,M.\,Cohn who realized that, in the same way that we can obtain a field from a commutative ring by localizing at a prime ideal (and then taking the residue field), we can obtain a division ring $\DC$ from any ring $R$ by means of universal localization at prime matrix ideals (cf.\ \cite{Cohn2006}). 
Similarly to the commutative case, the division ring obtained in this way is generated as a division ring by the image of $R$ under the corresponding map $R\to\DC$.
The pair given by $\DC$ and the map $R\to\DC$, or sometimes just $\DC$ if the map is clear from the context, is usually referred to as \emph{epic division $R$-ring}.

Adopting the previous terminology, recall that if $R$ is commutative, an epic field $R\to K$ is completely characterized by its kernel, which is a prime ideal of $R$, in the sense that $K$ can be recovered as mentioned above, i.e., by localizing at the kernel and taking the residue field. This is equivalent to saying that such a homomorphism is determined by the set of elements that become invertible in $K$, the ones outside the kernel. In the very same spirit, P.\,M.\,Cohn showed that a prescribed epic division $R$-ring is completely characterized by its singular kernel, which is a prime matrix ideal, or equivalently by the set $\Sigma$ of matrices becoming invertible under the homomorphism. From the latter point of view, the map will be injective if and only if $\Sigma$ contains every non-zero element of $R$.

Assume that we are given an embedding $R\hookrightarrow \DC$ of the domain $R$ into the division ring $\DC$. Then, a natural necessary condition for an $n\times n$ matrix $A$ over $R$ to become invertible over $\DC$ is that it cannot be expressed as a product $A = BC$ for some matrices $B$, $C$ of sizes $n\times m$ and $m\times n$, respectively, where $m<n$. Otherwise, the usual rank $\rk_{\DC}(A)$ of $A$ over $\DC$ would be less or equal than $m$, and hence $A$ would not be invertible. A matrix satisfying this necessary condition is called \emph{full}. Therefore, one may wonder whether, among the division rings in which $R$ can be embedded, there exists one in which we can invert every full matrix. The rings for which this is possible, originally studied by W. Dicks and E. Sontag (\cite{DS1978}) as those satisfying the law of nullity with respect to the inner rank function, comprise the family of \emph{Sylvester domains}. The first examples of Sylvester domains were the free ideal rings (firs) (cf.\ \cite{Cohn2006}*{Section~5.5}).

In addition, observe that if the matrix $A$ is to become invertible in a division ring, then the same holds true for $A\oplus I_m$, the block diagonal matrix with blocks $A$ and $I_m$, where $I_m$ denotes the $m\times m$ identity matrix. Thus, $A\oplus I_m$ must in fact be full for every non-negative integer $m$. A matrix with this property is called \emph{stably full} and, of course, in a Sylvester domain it is the case that every full matrix is stably full. Nevertheless, in general there may be full matrices that are not stably full, and hence, the question of whether there exists a division ring $\DC$ in which $R$ embeds and in which we can invert every stably full matrix over $R$ is interesting in its own right. The rings with this property are the \emph{pseudo-Sylvester domains}, which were introduced in \cite{CS1982} as the family of stably finite rings satisfying the law of nullity with respect to the stable rank function. 

Notice that in both of the scenarios described in the two previous paragraphs, if such a division ring $\DC$ exists, then it is necessarily \emph{universal} in the sense of P.\,M.\,Cohn (see \cref{subsect:localization}), meaning that if a matrix $A$ over $R$ becomes invertible over some division ring, then it is also invertible over $\DC$.

Recently, in \cite{Jaikin2019_sylvester}, A. Jaikin-Zapirain introduced a new homological criterion for a ring to be a Sylvester domain. In \cref{sect:Theorem_A}, we provide a similar recognition principle for pseudo-Sylvester domains and we use it to prove the following result:

\begin{maintheo}
    \label{theo:main_fir_crossed_ZZ}
    Let $\F$ be a fir with universal division $\F$-ring of fractions $\DC_{\F}$, and consider a crossed product ring $\SC=\F\ast \ZZ$. Then, the following holds:
      \begin{enumerate}
          \item[a)] $\SC$ is a pseudo-Sylvester domain if and only if every finitely generated projective $\SC$-module is stably free.
          \item[b)] $\SC$ is a Sylvester domain if and only if it is projective-free.
      \end{enumerate}
    In any of the previous situations, the crossed product $\F\ast \ZZ$ can be extended to a crossed product $\DC_{\F}\ast \ZZ$ and $\DC_{\SC} = \Ore(\DC_{\F}\ast \ZZ)$, the Ore division ring of fractions of $\DC_{\F}\ast \ZZ$, is the universal division $\SC$-ring of fractions. Furthermore, it is isomorphic to the universal localization of $\SC$ with respect to the set of all stably full (resp.\ full) matrices.
\end{maintheo}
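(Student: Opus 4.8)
The plan is to exhibit an explicit candidate for the universal division ring of fractions and to feed it into the homological criterion for (pseudo-)Sylvester domains established in \cref{sect:Theorem_A}. Write $\SC = \F \ast \ZZ$, so that a generator of $\ZZ$ acts on $\F$ through an automorphism $\alpha$ and the multiplication is twisted by a $2$-cocycle with values in $\F^\times$. Since $\DC_{\F}$ is the universal localization of $\F$ at its full matrices, $\alpha$ extends uniquely to an automorphism of $\DC_{\F}$ and the cocycle takes values in $\DC_{\F}^\times$; hence $\SC$ embeds into the skew Laurent polynomial ring $\DC_{\F} \ast \ZZ$ over the division ring $\DC_{\F}$. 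The latter is a principal ideal domain on both sides, in particular a Noetherian Ore domain, so $\DC := \Ore(\DC_{\F} \ast \ZZ)$ exists. The composite $\SC \hookrightarrow \DC_{\F} \ast \ZZ \hookrightarrow \DC$ is an embedding, and it is epic because $\DC_{\F}$ is generated as a division ring by $\F$ and Ore localization only adjoins fractions; in particular $\SC$, being a subring of a division ring, is stably finite.

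\emph{Homological bookkeeping.} A fir is projective-free and (left and right) hereditary, so the skew Laurent extension $\SC$ has global dimension at most $2$ (in particular finite weak dimension), while its reduced projective class group need not vanish, by a Bass--Heller--Swan type computation — which is precisely why the $K$-theoretic conditions in a) and b) are not automatic. The universal localization $\DC_{\F}$ is flat over $\F$ on both sides; keeping track of the cocycle one has $\DC_{\F} \ast \ZZ \cong \DC_{\F} \otimes_{\F} \SC$ as right $\SC$-modules (and symmetrically on the left), and since $\SC$ is free over $\F$, flatness of $\DC_{\F}$ over $\F$ passes to flatness of $\DC_{\F} \ast \ZZ$ over $\SC$. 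As the Ore localization $\DC$ is flat over $\DC_{\F} \ast \ZZ$, we conclude $\fld_{\SC} \DC = 0$; together with the finiteness of the homological dimension of $\SC$ this verifies the homological hypotheses imposed on an epic division ring of fractions by the criterion of \cref{sect:Theorem_A}.

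\emph{Applying the criteria.} For a), the pseudo-Sylvester criterion then reduces the question to its $K$-theoretic part: since a flat epic division $\SC$-ring of fractions exists unconditionally, $\SC$ is a pseudo-Sylvester domain if and only if every finitely generated projective $\SC$-module is stably free, the reverse implication being the standard fact that pseudo-Sylvester domains enjoy stably free cancellation (\cite{CS1982}). Part b) runs identically with the Sylvester criterion of \cite{Jaikin2019_sylvester} replacing its pseudo-analogue and "stably free" replaced by "free", using that Sylvester domains are projective-free (\cite{DS1978}, \cite{Cohn2006}). In either case the criterion additionally certifies that the division ring $\DC$ constructed above is \emph{the} universal division $\SC$-ring of fractions $\DC_{\SC}$; and since the matrices inverted over the universal division ring of fractions of a pseudo-Sylvester (resp.\ Sylvester) domain are exactly the stably full (resp.\ full) ones, Cohn's theory identifies $\DC_{\SC}$ with the universal localization of $\SC$ at that set of matrices.

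\emph{Main obstacle.} The delicate point is the homological bookkeeping of the second step: establishing that $\DC = \Ore(\DC_{\F} \ast \ZZ)$ genuinely satisfies the hypotheses of the criterion — concretely, the flatness of $\DC$ over $\SC$ — which rests on the flatness of $\DC_{\F}$ over the fir $\F$ and its survival first under forming the crossed product with $\ZZ$ and then under Ore localization. Handling a genuine crossed product rather than an untwisted skew polynomial ring requires carefully tracking the $2$-cocycle, but introduces no essential new difficulty.
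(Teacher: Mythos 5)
There is a genuine gap, and it sits exactly where you locate the ``main obstacle'': the flatness claims in your second step are false. You assert that $\DC_{\F}$ is flat over $\F$ on both sides and deduce $\fld_{\SC}\DC=0$. But a domain $R$ admitting a division $R$-ring of fractions that is flat as a (left or right) $R$-module is necessarily an Ore domain on the corresponding side, and a fir such as $\F=E\ast F$ for $F$ free of rank $\ge 2$ (or the free algebra $k\langle x,y\rangle$) is not Ore. Universal localizations, unlike Ore localizations, are not flat in general; what Schofield's theorem actually provides for the epic map $\F\hookrightarrow\DC_{\F}=\F_\Sigma$ is only the vanishing $\Tor_1^{\F}(\DC_{\F},\DC_{\F})=0$, which is much weaker than flatness. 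Consequently $\DC=\Ore(\DC_{\F}\ast\ZZ)$ is not flat over $\SC$ in general, and your verification of the criterion's hypotheses collapses. The paper instead establishes precisely the weaker statement $\Tor_1^{\SC}(\DC_{\SC},\DC_{\SC})=0$, by combining Schofield's theorem over $\F$, Shapiro's lemma (using that $\SC$ is free, hence flat, over $\F$, which is where flatness genuinely enters), and the fact that the single step $\DC_{\F}\ast\ZZ\to\DC_{\SC}$ \emph{is} a flat Ore localization.

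A second, related omission: you have also misstated what the criterion demands. Besides $\Tor_1^{\SC}(\DC,\DC)=0$, condition (2) of \cref{theo:pseudosylvester_criterion} and \cref{theo:sylvester_criterion} requires that for every finitely generated $\SC$-submodule $M$ of $\DC$ and every presentation $0\to J\to\SC^n\to M\to 0$, the syzygy $J$ be finitely generated stably free (resp.\ free). This is the heart of the proof and is not a formal consequence of finite global dimension: one must show $J$ is finitely generated projective (via the global dimension $\le 2$ bound for $\SC$, the projective dimension $\le 1$ of submodules of $\DC_{\F}\ast\ZZ$, Schanuel's lemma, and a finite generation argument using $\Tor_1$-vanishing), and only then does the hypothesis ``every finitely generated projective is stably free (resp.\ $\SC$ is projective-free)'' convert $J$ into a stably free (resp.\ free) module. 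Your proposal skips this entirely. Finally, a small but telling slip: the reverse implication in a) uses that over a pseudo-Sylvester domain every finitely generated projective is stably free (\cite{Cohn2006}*{Proposition 5.6.2}), not that pseudo-Sylvester domains have stably free cancellation --- they do not in general, and that failure is exactly what separates them from Sylvester domains.
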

As a particular application of \cref{theo:main_fir_crossed_ZZ}, we obtain the next result in \cref{sect:applications} through the recent advances on the Farrell--Jones conjecture by Bestvina--Fujiwara--Wigglesworth and Brück--Kielak--Wu:

\begin{maintheo}
    \label{theo:main_pseudosylvester}
    Let $E$ be a division ring and $G$ a group arising as an extension
    \[1\to F\to G\to \ZZ \to 1\]
    where $F$ is a free group.
    Then every crossed product $E\ast G$ is a pseudo-Sylvester domain.
    In particular, $\DC_{E\ast G}=\Ore(\DC_{E\ast F}\ast \ZZ)$ is the universal division $E\ast G$-ring of fractions and it is isomorphic to the universal localization of $E\ast G$ with respect to the set of all stably full matrices.
    Moreover, $E\ast G$ is a Sylvester domain if and only if it has stably free cancellation.
\end{maintheo}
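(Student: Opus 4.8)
The plan is to deduce everything from \cref{theo:main_fir_crossed_ZZ}. Since $\ZZ$ is free, the extension $1\to F\to G\to\ZZ\to 1$ splits, so $G\cong F\rtimes\ZZ$ and the crossed product decomposes as $E\ast G\cong(E\ast F)\ast\ZZ$, a crossed product of $\F\coloneqq E\ast F$ with $\ZZ$. A crossed product of a division ring with a free group is a fir (\cite{Cohn2006}), and firs are Sylvester domains, so $\F=E\ast F$ admits a universal division ring of fractions $\DC_{\F}=\DC_{E\ast F}$. Hence $\SC=\F\ast\ZZ=E\ast G$ is precisely of the form treated in \cref{theo:main_fir_crossed_ZZ}, and that theorem reduces \cref{theo:main_pseudosylvester} to two $K$-theoretic facts about $E\ast G$: that every finitely generated projective $E\ast G$-module is stably free, and that, granting this, $E\ast G$ is projective-free if and only if it has stably free cancellation. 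The second fact is immediate, since under ``all finitely generated projectives stably free'' projective-freeness and stably free cancellation are the same condition. So the only real content is the vanishing $\widetilde K_0(E\ast G)=0$.

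To prove $\widetilde K_0(E\ast G)=0$ I would invoke the Farrell--Jones conjecture. As a free-by-\{infinite cyclic\} group, $G$ is normally poly-free, hence by the recent theorems of Bestvina--Fujiwara--Wigglesworth and of Brück--Kielak--Wu it satisfies the $K$-theoretic Farrell--Jones conjecture with coefficients in additive categories; this is exactly the form that applies to the twisted setting of a crossed product $E\ast G$. Because $G$ is torsion-free and the coefficient division ring $E$ is regular (so $K_{-n}(E)=0$ for $n\ge 1$ and all relevant Nil-terms over $E$, including twisted ones, vanish), the relative assembly map from the trivial family to the family of virtually cyclic subgroups is an isomorphism, and one obtains
\[
K_n(E\ast G)\;\cong\;H_n\big(BG;\mathbb{K}(E)\big)\qquad\text{for all }n.
\]
In degree $0$ the Atiyah--Hirzebruch spectral sequence has no negative-$K$-theory contributions, so $H_0\big(BG;\mathbb{K}(E)\big)=H_0(BG;K_0(E))=\ZZ$, and the assembly isomorphism carries this copy of $\ZZ$ onto the subgroup of $K_0(E\ast G)$ generated by the free module of rank one. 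Thus $\widetilde K_0(E\ast G)=0$, and every finitely generated projective $E\ast G$-module is stably free.

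Putting the pieces together: part~a) of \cref{theo:main_fir_crossed_ZZ} now gives that $E\ast G$ is a pseudo-Sylvester domain unconditionally, its part~b) gives the Sylvester-versus-stably-free-cancellation dichotomy, and the concluding clause of \cref{theo:main_fir_crossed_ZZ} identifies $\DC_{E\ast G}=\Ore(\DC_{E\ast F}\ast\ZZ)$ as the universal division $E\ast G$-ring of fractions and as the universal localization of $E\ast G$ at the set of all stably full matrices. The main obstacle is the step $\widetilde K_0(E\ast G)=0$: it is where the recently-established Farrell--Jones conjecture for these groups enters, and one must take care to run the argument with twisted (crossed-product) coefficients rather than for ordinary group rings and to justify the reduction to the trivial family from regularity of $E$. (For free-by-\{infinite cyclic\} groups one could alternatively bypass Farrell--Jones and obtain $\widetilde K_0(E\ast G)=0$ directly from the Bass--Heller--Swan/Waldhausen decomposition for the twisted Laurent ring $(E\ast F)\ast\ZZ$, using that the fir $E\ast F$ is regular coherent and projective-free; but the Farrell--Jones route is the one that will also cover the surface-group application.)
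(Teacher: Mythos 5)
Your proposal is correct and follows essentially the same route as the paper: decompose $E\ast G$ as $(E\ast F)\ast\ZZ$ with $E\ast F$ a fir, invoke the Farrell--Jones conjecture for free-by-\{infinite cyclic\} groups (Bestvina--Fujiwara--Wigglesworth, Brück--Kielak--Wu) with additive-category coefficients, reduce from $\VCyc$ to the trivial family via torsion-freeness and regularity of $E$, and read off $K_0(E)\cong K_0(E\ast G)$ from the Atiyah--Hirzebruch spectral sequence to conclude that all finitely generated projectives are stably free, after which \cref{theo:main_fir_crossed_ZZ} gives everything. The only cosmetic difference is that the paper spells out why the local coefficient system $K_0(\AC_{E\ast G}\ast\nicefrac{G}{\{1\}})$ is constant, a point you elide but which is easily supplied.
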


Some examples of groups to which \cref{theo:main_pseudosylvester} can be applied and for which it is known whether they admit stably free cancellation are given in \cref{subsect:examples}.

Note that Jaikin-Zapirain already showed in \cite{Jaikin2019_hughes}*{Theorem~1.1 \& Theorem~3.7} that $E\ast G$ has a universal division ring of fractions.
Furthermore, in \cite{LL2018}*{Theorem~2.17}, it has already been shown that $KG$, where $K$ is a subfield of $\CC$, admits a universal localization that is a division ring.
\cref{theo:main_pseudosylvester} implies both of these results and additionally provides a description of the matrices that become invertible over $\DC_{E\ast G}$.

The universal division $E\ast G$-rings shown to exist above can actually be identified with previously constructed division rings using Hughes-freeness, which will be the topic of \cref{subsect:locally_indicable}.
First, observing that the group $G$ is locally indicable and thus admits a Conradian order $\leq$, results of Gräter \cite{Grater2019} imply that $\DC_{E\ast G}$ is the division closure of $E\ast G$ within the endomorphism ring of the $E$-vector space $E((G,\leq))$ of Malcev--Neumann series.
Second, for the particular case of a classical group ring $KG$ with a subfield $K$ of $\CC$, the division ring $\DC_{KG}$ coincides with the Linnell division ring $\DC(G;K)$ constructed by Linnell \cite{Linnell1993} in his proof of the Atiyah conjecture for a class of groups containing the groups considered in \cref{theo:main_pseudosylvester}.

\subsection*{Acknowledgements}
The authors want to thank Andrei Jaikin-Zapirain for suggesting to use his recent results on Sylvester domains to study the group rings of surface groups.
They are also thankful to Andrei Jaikin-Zapirain, Daniel Kasprowski, and Wolfgang Lück for helpful discussions.

The present work is part of the authors' PhD projects at the University of Bonn and the Autonomous University of Madrid (UAM), respectively.

The first author was supported by Wolfgang Lück's ERC Advanced Grant ``KL2MG-interactions'' (no.\ 662400) granted by the European Research Council, as well as by the Max Planck Institute for Mathematics.

The second author was partially supported by the grant MTM2017-82690-P of the Spanish MINECO, and by the ICMAT ``Severo Ochoa'' project SEV-2015-0554. The second author would also like to thank the Max Planck Institute for Mathematics for the hospitality and support, since the foundations for this paper were laid while he was a guest at this institute.

\section{Definitions and background} \label{sect:background}

All rings are assumed to be associative with unit, but not necessarily commutative. If not otherwise specified, modules are taken to be left modules.

\smallskip

\subsection{Crossed products}\label{subsect:crossed_products}
Let $R$ be a ring and $G$ a group.
A \emph{crossed product} $S$ is a ring that contains $R$ and a set of units $\tilde G=\{\tilde g\mid g\in G\}$, a copy of $G$ as a set, such that:
\begin{itemize}
    \item as a left $R$-module, $S$ is free with basis $\tilde G$, i.e., $S = \bigoplus_{g\in G} R\tilde g$, and $\tilde e = 1_S$;
    \item for every $\tilde g,\tilde h\in G$, we have $R\tilde g = \tilde g R$ and $R\tilde g\tilde h = R\widetilde{gh}$.
\end{itemize}
In other words, $S$ is a $G$-graded ring with base ring $R$ that has a unit in each component. Note that it also follows that $S$ is free as a right $R$-module. We denote such a ring $S$ simply by $R*G$, but caution the reader that the full ring structure is not reflected in the notation.

For every subgroup $N$ of $G$ there exists a subring $R*N$ of $R\ast G$, and if $N$ is normal in $G$ and $G/N$ is infinite cyclic, then we have $R\ast G = (R\ast N)\ast G/N \cong (R\ast N)[t^{\pm 1};\tau]$ for some automorphism $\tau$ of $R\ast N$, where the right hand side denotes the skew Laurent polynomial ring with coefficients in $R\ast N$ and multiplication rule $tx=\tau(x)t$ for all $x\in R\ast N$ (cf.\ \cite{MR2001}*{1.5.9~Lemma~(ii) \& 1.5.11~Proposition~(i)}). In particular, every crossed product $R\ast \ZZ$ is isomorphic to a skew Laurent polynomial ring $R[t^{\pm 1};\tau]$ and every such ring is an instance of crossed product with $\ZZ$.

\subsection{Ore and universal localizations} \label{subsect:localization}
Recall that whenever we have a commutative domain $R$, we can consider its field of fractions, a field in which every element can be expressed as a fraction of the form $rs^{-1}$ for some $r,s\in R$ where $s$ is non-zero. When $R$ is a non-commutative domain, a division ring with such a description may not exist in general, since we have no way in principle to ensure that sums and products of elements of the form $rs^{-1}$ admit a similar expression. The condition to ensure the feasibility of this procedure is the so-called Ore condition.

Assume that we are given a ring $R$, non necessarily a domain for the moment, and let $T$ be a multiplicative set of non-zero-divisors in $R$. We say that $T$ is \emph{right Ore} if, for every $r\in R, t\in T$, there exist $s\in R, u\in T$ such that $ru = ts$. Under this condition, one can construct the \emph{right Ore localization} $\Ore_{r,T}(R) = RT^{-1}$, a ring whose elements can be written as $rt^{-1}$ for $r\in R, t\in T$ (cf.\ \cite{GW2004}*{Theorem~6.2}). Moreover, the ring $\Ore_{r,T}(R)$ is flat as a left $R$-module (cf.\ \cite{GW2004}*{Corollary~10.13}), i.e., the tensor $\square \otimes_R \Ore_{r,T}(R)$ preserves short exact sequences of right $R$-modules. Similarly we can define left Ore sets, and if $T$ is both left and right Ore, then $\Ore_{l,T}(R) = \Ore_{r,T}(R)$ (cf.\ \cite{GW2004}*{Proposition~6.5}).

Thus, observe that if $R$ is a \emph{right Ore domain}, i.e., if it is a domain and the set $T = R\backslash \{0\}$ of all non-zero elements of $R$ is right Ore, the right Ore localization $\Ore_r(R):= \Ore_{r,T}(R)$ is a division ring whose elements are fractions of the previous form (cf.\ \cite{GW2004}*{Theorem~6.8}). If $R$ is a right and left Ore domain, we just say that it is an \emph{Ore domain}, and denote its Ore localization by $\Ore(R)$. For instance, this is the case of a skew Laurent polynomial ring of the form $R[t^{\pm 1};\tau]$ where $R$ is both a right and a left Noetherian domain and $\tau$ is an automorphism of $R$ (cf.\ \cite{GW2004}*{Corollary~1.15 \& Corollary~6.7})).\medskip

Going a step further, one can consider the general question of whether a given non-commutative domain $R$ can be embedded at all into a division ring. In this full generality, it can be treated by means of P.\,M.\,Cohn's theory of epic division $R$-rings (cf.\ \cite{Cohn2006}*{Chapter~7}), which relies on the existence of prime matrix ideals (for the definition of this notion, we refer the reader to \cite{Cohn2006}*{Chapter~7, Section~3}), and universal localizations.
\begin{defi}
 Given a set $\Sigma$ of (square) matrices over $R$, and a homomorphism of rings $\varphi\colon R \to S$, we say that the map $\varphi$ is \emph{$\Sigma$-inverting} if every element of $\Sigma$ becomes invertible over $S$. We say that $\varphi$ is \emph{universal $\Sigma$-inverting} if any other $\Sigma$-inverting homomorphism factors uniquely through $\varphi$. In this latter case, we denote $S = R_{\Sigma}$ and we call $R_{\Sigma}$ the universal localization of $R$ with respect to $\Sigma$.
\end{defi}
If we allow $R_{\Sigma}$ to be the zero ring, the existence of the universal localization can always be proved by taking a presentation of $R$ as a ring and formally adding the necessary generators and relations. Moreover, the universal $\Sigma$-inverting homomorphism will be injective if and only if there exists a $\Sigma$-inverting embedding to some ring (\cite{Cohn2006}*{Theorem~7.2.4}).

To briefly explain P.\,M.\,Cohn's main result on the topic, we need to introduce the notion of epic division $R$-ring.
\begin{defi}
    Given a ring $R$, an \emph{epic division $R$-ring} is a division ring $\DC$ together with a ring homomorphism $R\to \DC$ such that $\DC$ is generated, as a division ring, by the image of $R$. If additionally the homomorphism is injective, then we say that $R\hookrightarrow \DC$ (or simply $\DC$) is a \emph{division $R$-ring of fractions}.
\end{defi}
The ``epic'' terminology is justified through the fact that $\DC$ being generated by the image of $R$ is equivalent to the homomorphism being an epimorphism in the category of rings (\cite{Cohn2006}*{Corollary 7.2.2}). With this, P.\,M.\,Cohn proved that epic division $R$-rings are completely characterized (up to $R$-isomorphism) by the set $\Sigma$ of matrices over $R$ that become invertible in the division ring, and that they always arise as residue fields of a universal localization $R_{\Sigma}$ (\cite{Cohn2006}*{Theorem~7.2.5 \& Theorem~7.2.7}). In addition, such sets $\Sigma$ are precisely the complements in the set of square matrices over $R$ of prime matrix ideals $\mathcal P$ (\cite{Cohn2006}*{Theorem~7.4.3}). Thus, we would have an embedding of $R$ into a division ring if we can construct such a set $\Sigma$ including all non-zero elements in $R$.

Finally, if among all the possible division $R$-rings of fractions there exists one in which we can invert ``the most'' (relative to $R$) matrices possible, we call it the universal division $R$-ring of fractions. More precisely:

\begin{defi}
 The division $R$-ring of fractions $R\hookrightarrow \DC$  is called \emph{the universal division $R$-ring of fractions} if, for any other epic division $R$-ring $\DC'$, the set $\Sigma'$ of matrices that become invertible over $\DC'$ is contained in the set $\Sigma$ of matrices that become invertible over $\DC$.
\end{defi}

In \cref{subsect:Sylvester} we will introduce two families of rings, namely Sylvester and pseudo-Sylvester domains, for which there exists a universal division ring of fractions and for which the set $\Sigma$ of matrices becoming invertible under the embedding can be characterized in a natural way only depending on $R$. Our main result will be to build on a homological criterion for a ring to belong to one of these families, which is why we need to introduce parts of the dimension theory of (non-commutative) rings in the following.

\subsection{Weak and global dimensions} \label{subsect:dimensions}

Recall that a module $N$ over a ring $R$ has \emph{projective dimension} at most $n$ (abbreviated $\prd(N)\le n$) if $N$ admits a resolution
\[0\to P_n\to\ldots\to P_0\to N\to 0\]
of projective $R$-modules. In particular, observe that $N$ is projective if and only if $\prd(N) = 0$. The supremum among the projective dimensions of all left (resp.\ right) $R$-modules is called the \emph{left} (resp.\ \emph{right}) \emph{global dimension} of $R$, and it is not left-right symmetric in general. This concept is deeply related to $\Ext$ functors.

\begin{lemm}[{\cite{Rotman2009}*{Proposition 8.6}}]
    \label{lemm:ext_rotman}
    Let $N$ be a left $R$-module. Then $\prd(N)\le n$ if and only if $\Ext_R^{n+1}(N, N')=0$ for all left $R$-modules $N'$.
\end{lemm}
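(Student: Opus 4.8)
The plan is to prove both implications by \emph{dimension shifting}. First I would fix a projective resolution $\cdots \to P_1 \xrightarrow{d_1} P_0 \xrightarrow{d_0} N \to 0$ and write $K_i := \ker d_i$ for the $i$-th syzygy module (with the convention $K_{-1} := N$), so that there are short exact sequences $0 \to K_i \to P_i \to K_{i-1} \to 0$ for every $i \ge 0$. Feeding each of these into the long exact sequence for $\Ext_R(-,N')$ and using that $\Ext_R^k(P_i,N') = 0$ for $k \ge 1$, since $P_i$ is projective, I obtain isomorphisms $\Ext_R^{k+1}(K_{i-1},N') \cong \Ext_R^k(K_i,N')$ for all $k \ge 1$; splicing them together gives, for every $n \ge 1$ and every left $R$-module $N'$,
\[\Ext_R^{n+1}(N,N') \;\cong\; \Ext_R^1(K_{n-1},N').\]

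For the backward direction I would first record the standard fact that a module $M$ is projective if and only if $\Ext_R^1(M,-)$ vanishes identically: picking a short exact sequence $0 \to L \to Q \to M \to 0$ with $Q$ projective, vanishing of $\Ext_R^1(M,L)$ makes $\Hom_R(Q,L) \to \Hom_R(L,L)$ surjective, so $\id_L$ lifts and the sequence splits, exhibiting $M$ as a direct summand of $Q$. Granting this, if $n \ge 1$ and $\Ext_R^{n+1}(N,N') = 0$ for all $N'$, then the displayed isomorphism forces $\Ext_R^1(K_{n-1},-)$ to vanish, so $K_{n-1}$ is projective and $0 \to K_{n-1} \to P_{n-1} \to \cdots \to P_0 \to N \to 0$ is a projective resolution of length $n$, i.e.\ $\prd(N) \le n$. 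The case $n = 0$ is the quoted fact applied directly to $N$.

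For the forward direction, assuming $\prd(N) \le n$, the quickest argument invokes the independence of $\Ext$ from the chosen resolution: computing $\Ext_R^{n+1}(N,N')$ from a projective resolution of length $\le n$, whose degree-$(n+1)$ term is $0$, makes the group vanish. If one prefers to stay within the syzygy picture of the resolution fixed above, one instead appeals to the generalized Schanuel lemma to conclude that $K_{n-1}$ is already projective once $\prd(N) \le n$, and then the displayed isomorphism gives $\Ext_R^{n+1}(N,N') \cong \Ext_R^1(K_{n-1},N') = 0$.

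I expect the only non-formal point to be this last one — that a bound on $\prd(N)$ descends to the syzygies of an \emph{arbitrary} projective resolution — which is precisely what Schanuel-type arguments handle; the long exact sequence manipulations and the criterion that projectivity is equivalent to the vanishing of $\Ext_R^1$ are routine, so the bookkeeping of indices, together with the separate treatment of the case $n = 0$, is the only thing demanding real attention.
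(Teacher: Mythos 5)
Your dimension-shifting argument is correct: the syzygy isomorphisms $\Ext_R^{n+1}(N,N')\cong\Ext_R^1(K_{n-1},N')$, the characterization of projectivity via vanishing of $\Ext^1$, and the separate treatment of $n=0$ all check out. The paper does not prove this lemma but simply cites \cite{Rotman2009}*{Proposition 8.6}, and your proof is essentially the standard textbook argument given there, so there is nothing further to compare.
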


Analogously, we say that the \emph{flat dimension} of $N$ is at most $n$, and we write $\fld(N)\le n$, if it admits a resolution of flat $R$-modules
\[0\to Q_n\to\ldots\to Q_0\to N\to 0,\]
and define the \emph{left} (resp.\ \emph{right}) \emph{weak dimension} of $R$ as the supremum of the flat dimensions of all left (resp.\ right) $R$-modules. It turns out that this notion is always left-right symmetric (\cite{Rotman2009}*{Theorem 8.19}) and hence we can just talk about the \emph{weak dimension} of $R$. As it happens with $\prd(N)$ and $\Ext^\ast_R(N,\square)$, the flat dimension of $N$ (resp.\ of a right $R$-module $M$) can be characterized in terms of $\Tor_\ast^R(\square,N)$ (resp.\ $\Tor_\ast^R(M,\square)$). Observe though that, unlike the previous case, here we need to change the argument while considering left or right modules.

\begin{lemm}[{\cite{Rotman2009}*{Proposition 8.17}}]
    \label{lemm:tor_rotman}
    Let $N$ be a left $R$-module. Then $\fld(N)\le n$ if and only if $\Tor^R_{n+1}(M, N)=0$ for all right $R$-modules $M$.
\end{lemm}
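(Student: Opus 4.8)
The plan is to mirror the proof of the companion statement \cref{lemm:ext_rotman} for projective dimension and $\Ext$, with projective resolutions replaced by flat resolutions, $\Ext$ replaced by $\Tor$, and the role played by ``projective'' taken over by ``flat''. The two homological inputs I would rely on are: (i) $\Tor^R_\ast(M,N)$ can be computed from \emph{any} flat resolution of $N$, because flat modules are acyclic for the functor $M\otimes_R-$; and (ii) a left $R$-module $N$ is flat if and only if $\Tor^R_1(M,N)=0$ for every right $R$-module $M$, which is merely the reformulation of flatness through the long exact sequence of $\Tor$.

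For the forward implication, given $\fld(N)\le n$ I would fix a flat resolution $0\to Q_n\to\cdots\to Q_0\to N\to 0$. By (i), the homology of the complex $0\to M\otimes_R Q_n\to\cdots\to M\otimes_R Q_0\to 0$ computes $\Tor^R_\ast(M,N)$; since this complex is concentrated in degrees $0,\dots,n$, its $(n+1)$-st homology, and hence $\Tor^R_{n+1}(M,N)$, vanishes for every $M$. Alternatively, one can avoid input (i) entirely and argue by induction on $n$, splitting off a short exact sequence $0\to N'\to Q_0\to N\to 0$ with $Q_0$ flat and using the long exact sequence of $\Tor$ together with $\Tor^R_i(M,Q_0)=0$ for $i\ge 1$.

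For the converse, I would argue by induction on $n$, assuming $\Tor^R_{n+1}(M,N)=0$ for all right $R$-modules $M$. The base case $n=0$ is precisely input (ii): $\Tor^R_1(M,N)=0$ for all $M$ says exactly that $N$ is flat, i.e.\ $\fld(N)\le 0$. For $n\ge 1$, choose a short exact sequence $0\to N'\to Q_0\to N\to 0$ with $Q_0$ free (hence flat); the long exact sequence of $\Tor$, together with $\Tor^R_n(M,Q_0)=\Tor^R_{n+1}(M,Q_0)=0$, yields an isomorphism $\Tor^R_n(M,N')\cong\Tor^R_{n+1}(M,N)=0$ for every $M$. By the inductive hypothesis $\fld(N')\le n-1$, so splicing a flat resolution of $N'$ of length $n-1$ with $0\to N'\to Q_0\to N\to 0$ produces a flat resolution of $N$ of length $n$, whence $\fld(N)\le n$.

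The step I expect to require the most care is input (i) — that $\Tor$ is balanced and may be computed from flat resolutions, which itself rests on flat modules being $M\otimes_R-$-acyclic together with the comparison theorem for resolutions. This can be quoted from standard references, or sidestepped by running the inductive argument in the forward direction as well, as indicated above; once it is granted, everything else is routine bookkeeping with the long exact sequence of $\Tor$ and the base-case flatness criterion.
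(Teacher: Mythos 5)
Your proof is correct, and it is the standard dimension-shifting argument; the paper itself gives no proof of this lemma, citing it directly to \cite{Rotman2009}*{Proposition 8.17}, whose argument is essentially the one you describe (flat resolutions compute $\Tor$, the base case is the $\Tor_1$-criterion for flatness, and the inductive step splices resolutions via the long exact sequence). No gaps.
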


We finish this section with the following result regarding $\Tor$, sometimes referred to as Shapiro's Lemma. A generalization proved using spectral sequences can be found in \cite{Rotman2009}*{Corollary~10.61}, but we give an elementary proof for the precise statement we need.

\begin{lemm}
 \label{lemm:shapiro}
 Let $R$ be a subring of $S$ such that $S$ is flat as a left $R$-module. Then, for any right $R$-module $M$, for any left $S$-module $N$ and for any $n\ge 0$, we have
 \[
  \Tor_n^R(M,{_R}N) \cong \Tor_n^S(M\otimes_R S, N)
 \]
 where ${_R}N$ denotes $N$ considered as a left $R$-module.
\end{lemm}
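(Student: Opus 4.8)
The plan is to build the isomorphism by taking a single projective resolution of $M$ over $R$ and comparing the two derived functors computed from it. First I would choose a projective resolution $P_\bullet \to M$ of $M$ by right $R$-modules. Since $S$ is flat as a left $R$-module, the functor $\square \otimes_R S$ is exact, so $P_\bullet \otimes_R S \to M \otimes_R S$ is a projective resolution of the right $S$-module $M \otimes_R S$ (each $P_i \otimes_R S$ is projective over $S$ because $P_i$ is a direct summand of a free right $R$-module and tensoring commutes with direct sums). Therefore $\Tor_n^S(M \otimes_R S, N)$ is the $n$-th homology of the complex $(P_\bullet \otimes_R S) \otimes_S N$.

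Next I would invoke the associativity of the tensor product: for each $i$ there is a natural isomorphism of abelian groups
\[
 (P_i \otimes_R S) \otimes_S N \;\cong\; P_i \otimes_R ({}_R N),
\]
where on the right $N$ is regarded as a left $R$-module via the inclusion $R \hookrightarrow S$. These isomorphisms are compatible with the differentials induced by $P_\bullet$, so they assemble into an isomorphism of complexes $(P_\bullet \otimes_R S) \otimes_S N \cong P_\bullet \otimes_R ({}_R N)$. Taking homology in degree $n$ gives
\[
 \Tor_n^S(M \otimes_R S, N) \;\cong\; H_n\big(P_\bullet \otimes_R ({}_R N)\big) \;=\; \Tor_n^R(M, {}_R N),
\]
since $P_\bullet$ is a projective resolution of $M$ over $R$. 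This is precisely the claimed isomorphism.

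The only genuinely delicate point is verifying that $P_\bullet \otimes_R S$ is again a \emph{projective} resolution of $M \otimes_R S$ over $S$: exactness is where the flatness hypothesis is used (without it one would only get a complex, not a resolution), and projectivity of the terms requires remembering that tensor commutes with arbitrary direct sums and preserves the retraction exhibiting each $P_i$ as a summand of a free module. Everything else is the standard naturality of the associativity isomorphism $(\square \otimes_R S)\otimes_S N \cong \square \otimes_R N$ and the fact that derived functors can be computed from any projective resolution; no spectral sequence is needed for this special case, which is why the elementary argument suffices.
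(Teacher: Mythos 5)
Your proposal is correct and follows essentially the same route as the paper's proof: tensor a projective resolution of $M$ with the flat module $S$ to obtain a projective resolution of $M\otimes_R S$ over $S$, then identify the two Tor-computing complexes via the associativity isomorphism $(P_i\otimes_R S)\otimes_S N\cong P_i\otimes_R N$ coming from $S\otimes_S N\cong N$. The only difference is that you spell out the verification that the terms $P_i\otimes_R S$ remain projective, which the paper leaves implicit.
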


\begin{proof}
    Assume that we have a projective resolution for $M$
    \[
     \ldots\to P_k\to\ldots\to P_0\to M\to 0.
    \]
    Since $S$ is a flat left $R$-module, the following sequence is also exact of projective right $S$-modules, i.e., a projective resolution for $M\otimes_R S$
    \[
     \ldots\to P_k\otimes_R S\to\ldots\to P_0\otimes_R S\to M\otimes_R S\to 0.
    \]
    Now, just observe that computing $\Tor_\ast^R(M, {_R}N)$ amounts to computing the homology of the chain
    \[
     \ldots\to P_k\otimes_R N\to\ldots\to P_0\otimes_R N\to 0
    \]
    and that computing $\Tor_\ast^{S}(M\otimes_R S, N)$ amounts to computing the homology of
    \[
     \ldots\to P_k\otimes_R S \otimes_S N \to\ldots\to P_0\otimes_R S \otimes_S N\to 0
    \]
    Since $S\otimes_S N \cong N$, the result follows.
\end{proof}

\subsection{Stably freeness and stably finite rings}\label{subsect:stably_freeness}

The criteria we are going to introduce in \cref{sect:Theorem_A} rely on proving that certain submodules are finitely generated free or stably free, respectively. Therefore, we need to deal with the latter concept and its relation with the notion of stably finite ring.

\begin{defi}
    A module $M$ over a ring $R$ is called \emph{stably free} if there exists $n \ge 0$ such that $M\oplus R^n$ is a free $R$-module.
\end{defi}

By a result of Gabel, a proof of which is given in \cite{Lam1978}*{Proposition~4.2}, any stably free module that is not finitely generated is already free.
For this reason,  we will restrict our attention to finitely generated stably free modules in the following.

If $M$ is a finitely generated stably free $R$-module and $M\oplus R^n$ is free, then this free module is necessarily finitely generated and hence isomorphic to some $R^m$.
In general, the difference $m-n$ needs neither be positive nor uniquely determined by $M$, reason why we introduce the stably finite property. \medskip

Recall that a ring $R$ is said to be \emph{stably finite} (or \emph{weakly finite}) if whenever $A$ and $B$ are two $n\times n$-matrices over $R$ such that $AB=I_n$, then also $BA=I_n$. This can be reformulated in terms of modules by saying that if $R^n\oplus K\cong R^n$, then $K = 0$.
For example, every division ring is stably finite. Also, if $K$ is a field of characteristic 0 and $G$ is any group, or if $K$ has positive characteristic and $G$ is sofic, the group ring $KG$ is stably finite (cf.\ \cite{Jaikin2019_survey}*{Corollary 13.7}).
Furthermore, any subring of a stably finite ring is again stably finite.

If $M$ is a non-trivial module over a stably finite ring $R$ and $M\oplus R^n\cong R^m$, then the difference $m-n$ is positive and constant among all such representations. We call this positive number the \emph{stably free rank} of $M$ and denote it by $\rk_{sf}(M)$.\medskip

To finish this subsection, let $P$ be a finitely generated projective module over $R$. We will recall in the next subsection that if $R$ is a Sylvester domain then $P$ is necessarily free, while if $R$ is just a pseudo-Sylvester domain, we can only deduce that $P$ is stably free. Thus, a first (and in fact, the only) obstruction for a pseudo-Sylvester domain to be a Sylvester domain is the following property.

\begin{defi}
    A stably finite ring $R$ is said to have \emph{stably free cancellation (SFC)} if every finitely generated stably free $R$-module $M$ is free of rank $\rk_{sf}(M)$.
\end{defi}

In \cite{Cohn2006}, rings with invariant basis number (IBN) and stably free cancellation are called \emph{Hermite rings}. Here, we keep the terminology ``stably free cancellation'' because Hermite rings have also been given other meanings in the literature.

Examples of group rings with and without stably free cancellation will be given in \cref{subsect:examples}.

\begin{rema}
    \label{rema:left_right}
    Let $R$ be a ring. If $M$ is a left (right) $R$-module, then $M^*\coloneqq \Hom_R(M, R)$, called the \emph{dual of $M$}, is naturally a right (left) $R$-module.
    The functor $\Hom_R(\square, R)$, sending $P$ to $P^*$, defines an equivalence between the category of finitely generated projective left $R$-modules and the opposite of the category of finitely generated projective right $R$-modules, with the inverse functor given in the same way.
    To see that $P\cong P^{**}$ for a finitely generated projective $R$-module $P$, note that taking the dual commutes with finite direct sums and the claim thus needs to be checked only for $R$ itself viewed as an $R$-module, where it is clear.
    The equivalence defined in this way restricts to equivalences of the respective subcategories of finitely generated stably free and finitely generated free modules.

    As a consequence, every property of rings that can be expressed in terms of these categories in a way that is invariant under passing to an equivalent or opposite category will hold for left modules if and only if it holds for right modules.
    In particular, whether or not any of the classes of finitely generated projective, stably free or free modules coincide for a ring does not depend on whether left or right modules are considered.
\end{rema}

\subsection{(Pseudo-)Sylvester domains} \label{subsect:Sylvester}
In this section we introduce the main families of rings we are going to deal with throughout the paper, namely, Sylvester domains and pseudo-Sylvester domains. We are going to define them in terms of inner and stable rank of matrices, what a priori may seem to be unrelated to the topics discussed in the previous subsections, but we will see how they interact.

Let $R$ be a ring, and $A$ an $m\times n$ matrix over $R$. Recall that the \emph{inner rank} $\rho(A)$ is defined as the least $k$ such that $A$ admits a decomposition $A = B_{m\times k}C_{k\times n}$. We say that a square matrix $A$ of size  $n\times n$  is \emph{full} if $\rho(A) = n$. Recall also that the \emph{stable rank} $\rho^\ast(A)$ is given by
\[\rho^\ast (A) = \lim_{s\to \infty} \left[\rho(A\oplus I_s)-s\right], \]
whenever the limit exists, where $A\oplus I_s$ denotes the block diagonal matrix with blocks $A$ and $I_s$. We analogously say that a square matrix is \emph{stably full} if it has maximum stable rank. When $R$ is stably finite, $\rho^\ast(A)$ is well-defined and non-negative, and it is positive if $A$ is a non-zero matrix (\cite{Cohn2006}*{Proposition 0.1.3}). For this reason, in the following we restrict our attention to stably finite rings.

Observe that from the definition of the inner rank it follows that the sequence in the limit is always non-increasing and bounded above by $\rho(A)$. In particular, for an $n\times n$ matrix $A$ we obtain that $\rho^\ast(A)\le\rho(A)\le n$ and that $\rho^\ast(A)=n$ if and only if the sequence is constantly $n$. Thus, $A$ is stably full if and only if $\rho(A\oplus I_s) = n+s$ for every $s\ge 0$.

We summarize useful properties of the stable rank over stably finite rings.
\begin{lemm} \label{lemm:stable_rank}
Let $R$ be a stably finite ring. Then the following holds for every matrix $A$ over $R$:
\begin{enumerate}[label={(\arabic*)},ref={\thetheo~(\arabic*)}]
    \item \label[lemmenum]{lemm:stable_rank:additive}For every $k\ge 0$, $\rho^\ast(A\oplus I_k) = \rho^\ast(A)+k$.
    \item \label[lemmenum]{lemm:stable_rank:inner_rank} There exists $N\ge 0$ such that for every $l\ge N$, $\rho^\ast(A\oplus I_l) = \rho(A\oplus I_l)$.
    \item \label[lemmenum]{lemm:stable_rank:inequality}$0\le \rho^\ast(A)\le \rho(A)$.
\end{enumerate}
\end{lemm}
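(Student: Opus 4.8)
The plan is to derive all three statements from the definition of the stable rank via the non-increasing sequence $s\mapsto\rho(A\oplus I_s)-s$, using stable finiteness only to guarantee that the limit exists and is non-negative (as recalled just before the statement from \cite{Cohn2006}*{Proposition 0.1.3}). The main observation to carry along is the one already made in the paragraph preceding the lemma: for an $n\times n$ matrix the sequence is non-increasing, bounded above by $\rho(A)\le n$, and integer-valued, hence eventually constant; call its limit $\rho^\ast(A)$.

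\begin{proof}
    Throughout, write $a_s(A) \coloneqq \rho(A\oplus I_s) - s$ for $s\ge 0$; as noted above, this sequence is non-increasing in $s$ and, since $R$ is stably finite, it stabilizes at the value $\rho^\ast(A)\ge 0$.

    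\emph{(1)} For $k\ge 0$ and any $s\ge 0$ we have $a_s(A\oplus I_k) = \rho\big((A\oplus I_k)\oplus I_s\big) - s = \rho\big(A\oplus I_{k+s}\big) - s = a_{k+s}(A) + k$. Letting $s\to\infty$, the left-hand side converges to $\rho^\ast(A\oplus I_k)$ while $a_{k+s}(A)\to\rho^\ast(A)$, so $\rho^\ast(A\oplus I_k) = \rho^\ast(A) + k$.

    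\emph{(3)} The inequality $\rho^\ast(A)\ge 0$ is part of \cite{Cohn2006}*{Proposition 0.1.3}. For $\rho^\ast(A)\le\rho(A)$, apply the definition at $s=0$: the sequence $a_s(A)$ is non-increasing with $a_0(A) = \rho(A)$, so its limit $\rho^\ast(A)$ is at most $\rho(A)$.

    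\emph{(2)} Since the integer sequence $a_s(A)$ is non-increasing and bounded below (by $0$), it is eventually constant: there is $N\ge 0$ with $a_l(A) = \rho^\ast(A)$ for all $l\ge N$, i.e.\ $\rho(A\oplus I_l) = \rho^\ast(A) + l$ for all $l\ge N$. On the other hand, by part (1) we have $\rho^\ast(A\oplus I_l) = \rho^\ast(A) + l$ for every $l\ge 0$. Combining the two identities gives $\rho^\ast(A\oplus I_l) = \rho(A\oplus I_l)$ for all $l\ge N$.
\end{proof}

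The only genuine input is the cited \cite{Cohn2006}*{Proposition 0.1.3} (existence and non-negativity of the limit over a stably finite ring); everything else is bookkeeping with a monotone integer sequence, so I do not anticipate a real obstacle. The one point to state carefully is that the sequence is integer-valued and bounded below, which is what forces it to stabilize in part (2)—without stable finiteness one would not even know the limit exists, and the whole statement would be vacuous. If one wanted to avoid quoting Cohn for non-negativity, one could instead note directly that $\rho(A\oplus I_s)\ge s$ for all $s$ (a matrix with an $I_s$ block cannot have inner rank below $s$), which gives $a_s(A)\ge 0$ termwise and hence $\rho^\ast(A)\ge 0$ in the limit; I would include this as a parenthetical remark rather than reproving Cohn's proposition.
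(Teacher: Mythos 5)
Your proof is correct and follows essentially the same route as the paper's: both arguments use stable finiteness only to know that the non-increasing integer sequence $s\mapsto\rho(A\oplus I_s)-s$ stabilizes at a non-negative value $r$, deduce (1) by reindexing the limit, obtain (2) by combining the eventual constancy with (1), and note that (3) was already observed before the statement. The only difference is cosmetic notation ($a_s(A)$) and your optional parenthetical on avoiding the citation for non-negativity, which the paper does not include.
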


\begin{proof}
  Since $R$ is stably finite, we know that $\rho^\ast(A) = r \ge 0$. This means that there exists $N\ge 0$ such that for any $l\ge N$ we have $\rho(A\oplus I_l)-l = r$. Thus, for $k\ge 0$,
     \[ \rho^\ast(A\oplus I_k) = \lim_{s\to \infty} \left[ \rho(A\oplus I_k \oplus I_s)-(s+k)+k\right] = r + k = \rho^\ast(A)+k. \]
  From here, we also deduce that for $l\ge N$ one has
  \[ \rho(A\oplus I_l) = l + r = l+\rho^\ast(A) = \rho^\ast(A\oplus I_l).\]
  The last statement has already been observed above.
\end{proof}

We can now introduce the main notions of the subsection. Let us define first the notion of Sylvester domain, together with the main examples and properties.

\begin{defi}
  A non-zero ring $R$ is a \emph{Sylvester domain} if $R$ is stably finite and satisfies the law of nullity with respect to the inner rank, i.e., if $A\in \Mat_{m\times n}(R)$ and $B\in \Mat_{n\times k}(R)$ are such that $AB = 0$, then
  \[\rho(A)+\rho(B) \le n. \]
\end{defi}

In fact, it can be shown that the condition that $R$ is stably finite is redundant here, but we keep it as a requirement to show the symmetry with the upcoming definition of pseudo-Sylvester domain.

The following rings serve as the most prominent examples of Sylvester domains (\cite{Cohn2006}*{Proposition~5.5.1}):

\begin{defi}
A \emph{free ideal ring (fir)} is a ring in which every left and every right ideal is free of unique rank (as a module).
\end{defi}

As a consequence, in a fir every submodule of a free module is again free (cf.\ [Coh06, Corollary 2.1.2] and note that every submodule of a free $R$-module of rank $\kappa$ is $\max(|R|, \kappa)$-generated). For instance, a division ring $\DC$ is a fir, and the inner rank over $\DC$ is just its usual rank, which will be denote by $\rk_{\DC}$.
An important example is the group ring $KF$, where $K$ is a field and $F$ is a free group. This result was originally proved by P.\,M.\,Cohn, and we refer the reader to \cite{Lewin1969}*{Theorem~1} for a concise treatment.
More generally, for any division ring $E$ and free group $F$, the crossed product $E\ast F$ is a fir.
This is a consequence of Bergman's coproduct theorem (cf.\ \cite{Sanchez2008}*{Theorem~4.22 (i)}).

The following property of a ring, which by \cref{rema:left_right} is left-right symmetric, is intimately related to Sylvester domains.

\begin{defi}
    A ring $R$ is called \emph{projective-free} if every finitely generated projective $R$-module is free of unique rank.
\end{defi}

Note, for instance, that if $K$ is a field, then the polynomial ring $K[t_1,\dots,t_n]$ in $n$ indeterminates is projective-free, a result known as the Quillen-Suslin theorem.

Every Sylvester domain is projective-free and has weak dimension at most 2 (cf.\ \cite{DS1978}*{Theorem~6} and the subsequent discussion). In \cref{theo:main_fir_crossed_ZZ}, we will provide a class of rings of weak dimension at most 2 which are Sylvester domains if and only if they are projective-free.\medskip

In the same way that Sylvester domains are defined in terms of inner rank, pseudo-Sylvester domains are defined in terms of stable rank.

\begin{defi}
    A non-zero ring $R$ is a \emph{pseudo-Sylvester domain} if $R$ is stably finite and satisfies the law of nullity with respect to the stable rank, i.e., if $A\in \Mat_{m\times n}(R)$ and $B\in \Mat_{n\times k}(R)$ are such that $AB = 0$, then
  \[ \rho^\ast(A)+\rho^\ast(B)\le n.\]
\end{defi}

\begin{exam}
    The following rings are pseudo-Sylvester domains, but not Sylvester domains:
    \begin{itemize}
        \item The polynomial ring $D[x, y]$ in two variables over a division ring $D$ is a pseudo-Sylvester domain by \cite{CS1982}*{Proposition~6.5} and \cite{Bass1968}*{Theorem~XII.3.1}. It is not projective-free by \cite{OS1971}*{Proposition~1} if $D$ is non-commutative.
        \item The Weyl algebra $A_1(K)$ for a field $K$, which is the quotient of the free algebra on two generators $x$ and $y$ by the ideal generated by $xy-yx-1$, is a pseudo-Sylvester domain by \cite{CS1982}*{Proposition~6.5} and \cite{Stafford1977b}*{Theorem~2.2}.
        An example of a projective non-free ideal is provided in \cite{Stafford1977}*{Section~6}.
    \end{itemize}
\end{exam}

In analogy to the case of Sylvester domains, any finitely generated projective module over a pseudo-Sylvester domain is stably free \cite{Cohn2006}*{Proposition 5.6.2}. Moreover, a pseudo-Sylvester domain is a Sylvester domain if and only if the ring has stably free cancellation by \cite{CS1982}*{Proposition~6.1}. \medskip

Several characterizations of Sylvester and pseudo-Sylvester domains can be found in \cite{Cohn2006}*{Theorem~7.5.13} and \cite{Cohn2006}*{Theorem 7.5.18}, respectively. In particular, they can be defined in terms of universal localizations and universal division rings of fractions. In this flavour, observe that for an $n\times n$ matrix $A$ to become invertible over a division ring $\DC$, we need $A$ to be stably full, since otherwise there would exist $s\ge 0$ such that $\rho(A\oplus I_s)<n+s$ and hence $A\oplus I_s$ would not be invertible over $\DC$. Thus, one can wonder whether there exists a division ring in which $R$ embeds and in which every stably full matrix can be inverted. The family of rings for which this is possible is precisely the family of pseudo-Sylvester domains.

For a Sylvester domain, the inner rank is additive, in the sense that $\rho(A\oplus B) = \rho(A)+\rho(B)$ holds for any matrices $A$ and $B$ (cf.\ \cite{Cohn2006}*{Lemma 5.5.3}), and thus the inner and stable rank coincide. Indeed, if $\rho^{\ast}(A) = r$, then by \cref{lemm:stable_rank:inner_rank} there exists $s\ge 0$ such that $\rho(A\oplus I_s) = \rho^{\ast}(A\oplus I_s)$, from where \cref{lemm:stable_rank:additive} and additivity tell us that $\rho^{\ast}(A) = \rho(A)$. As a consequence, every full matrix is actually stably full, and hence Sylvester domains will form the family of rings embeddable into a division ring in which we can invert all full matrices.
We record this in the following proposition, whose statement is implicit in \cite{Cohn2006}*{Theorem~7.5.13 \& Theorem~7.5.18}.

\begin{prop} \label{prop:equivalence}
For a non-zero ring $R$, the following are equivalent:
  \begin{enumerate}
  \item $R$ is a Sylvester (resp.\ pseudo-Sylvester) domain.
  \item There exists a division $R$-ring of fractions $R\hookrightarrow\DC$ such that every full (resp.\ stably full) matrix over $R$ becomes invertible over $\DC$.
  \end{enumerate}
 Moreover, if $R$ satisfies one, and hence each of the previous properties, $\DC$ is the universal division $R$-ring of fractions, and it is isomorphic to the universal localization of $R$ with respect to the set of all full (resp.\ stably full) matrices over $R$.
\end{prop}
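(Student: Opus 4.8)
The plan is to reduce the statement to P.\,M.\,Cohn's correspondence between prime matrix ideals over $R$ and epic division $R$-rings (\cite{Cohn2006}*{Theorem~7.2.5, Theorem~7.2.7 \& Theorem~7.4.3}); indeed the proposition is in essence a repackaging of \cite{Cohn2006}*{Theorem~7.5.13 \& Theorem~7.5.18}, and the work lies in extracting the exact formulation. Throughout, write $\Sigma$ for the set of full (resp.\ stably full) square matrices over $R$ and $\mathcal{N}$ for its complement in the set of all square matrices over $R$.

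The key preliminary step is to show that \emph{$R$ satisfies the law of nullity with respect to $\rho$ (resp.\ $\rho^\ast$) if and only if $\mathcal{N}$ is a prime matrix ideal}, and I expect this to be the main obstacle. All of the matrix-ideal axioms except one are formal; the substantial one, closure under determinantal sums, has to be derived from the law of nullity, which amounts to reproducing Cohn's argument in \cite{Cohn2006}*{Section~5.5} for the inner rank together with its stable-rank analogue in \cite{Cohn2006}*{Section~7.5}, while the converse implication recovers the law of nullity from the axioms. In the pseudo-Sylvester case, \cref{lemm:stable_rank} and the stable finiteness of $R$ are what keep the stable-rank bookkeeping under control — additivity under $\oplus I_k$ and positivity of $\rho^\ast$ on non-zero matrices, via \cite{Cohn2006}*{Proposition~0.1.3} — and they also supply primeness of $\mathcal{N}$: the implication $A\oplus B\in\mathcal{N}\Rightarrow A\in\mathcal{N}$ or $B\in\mathcal{N}$ is equivalent to additivity of $\rho^\ast$ (resp.\ $\rho$) on stably full (resp.\ full) matrices, and the latter follows by applying the law of nullity to the two off-diagonal products in a minimal factorization of $A\oplus B$ (with a suitable stabilization in the pseudo-Sylvester setting).

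Granting this, for (1)$\Rightarrow$(2): since $\mathcal{N}$ is a prime matrix ideal, Cohn's correspondence (\cite{Cohn2006}*{Theorem~7.2.7 \& Theorem~7.4.3}) produces an epic division $R$-ring $R\to\DC$, obtained as a residue ring of $R_\Sigma$, whose singular kernel is precisely $\mathcal{N}$, so that the matrices over $R$ that become invertible in $\DC$ are exactly those in $\Sigma$; and $R\to\DC$ is injective because every non-zero $a\in R$, viewed as a $1\times1$ matrix, has $\rho(a)=1$ (resp.\ $\rho^\ast(a)=1$ by \cite{Cohn2006}*{Proposition~0.1.3}, using stable finiteness), hence belongs to $\Sigma$. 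For (2)$\Rightarrow$(1): given a division $R$-ring of fractions $R\hookrightarrow\DC$ inverting every full (resp.\ stably full) matrix, first note that $R$ is stably finite, being a subring of $\DC$; then let $\mathcal{P}$ be the singular kernel of $\DC$, which is a prime matrix ideal by \cite{Cohn2006}*{Theorem~7.4.3}. A non-full matrix $A=B_{n\times m}C_{m\times n}$ with $m<n$ has $\rk_\DC(A)\le m<n$ and so is not invertible over $\DC$, and a non-stably-full $n\times n$ matrix $A$ admits some $s\ge0$ for which $A\oplus I_s$ is non-full, so $A\oplus I_s$ — hence $A$ — is not invertible over $\DC$; thus $\mathcal{N}\subseteq\mathcal{P}$, while the hypothesis gives the opposite inclusion $\mathcal{P}\subseteq\mathcal{N}$. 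Therefore $\mathcal{N}=\mathcal{P}$ is a prime matrix ideal, and by the preliminary step $R$ is a Sylvester (resp.\ pseudo-Sylvester) domain.

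It remains to identify $\DC$. In either case $\mathcal{N}$ is a prime matrix ideal and, by \cite{Cohn2006}*{Theorem~7.5.13 \& Theorem~7.5.18}, the universal localization $R_\Sigma$ is already a division ring, necessarily the $\DC$ produced above. Finally, $\DC$ is the universal division $R$-ring of fractions: if $\DC'$ is any epic division $R$-ring with associated set $\Sigma'$ of matrices becoming invertible over it, then each $A\in\Sigma'$, being invertible over the division ring $\DC'$, is stably full — a fortiori full when $R$ is a Sylvester domain, since there full and stably full coincide by the additivity of the inner rank recalled before the proposition — so $A\in\Sigma$; hence $\Sigma'\subseteq\Sigma$.
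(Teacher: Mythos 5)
Your argument is correct and takes essentially the same route as the paper: the authors give no separate proof, noting only that the statement is implicit in Cohn's Theorems~7.5.13 and~7.5.18 combined with the two elementary observations you also make (invertibility over a division ring forces stable fullness, and full equals stably full over a Sylvester domain by additivity of the inner rank). Your write-up merely unpacks in more detail how those theorems and the prime-matrix-ideal correspondence of Cohn's Chapter~7 deliver the equivalence, the universality of $\DC$, and its identification with the universal localization $R_\Sigma$, which is exactly the citation-based treatment the paper intends.
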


\section{Proof of \texorpdfstring{\cref{theo:main_fir_crossed_ZZ}}{Theorem A}} \label{sect:Theorem_A}

This section is devoted to prove \cref{theo:main_fir_crossed_ZZ} by verifying the conditions of \cref{theo:sylvester_criterion,theo:pseudosylvester_criterion}, both of which will be stated in \cref{subsect:criteria}.
The former is a particular case of a homological criterion introduced by Jaikin-Zapirain in \cite{Jaikin2019_sylvester} to determine when a ring with a prescribed embedding into a division ring is a Sylvester domain.
The latter is an adaptation of this recognition principle to pseudo-Sylvester domains.

Throughout this section, $\F$ will always denote a fir with universal division $\F$-ring of fractions $\DC_{\F}$, and we will consider any crossed product ring $\SC = \F \ast \ZZ$.

The following lemma tells us in particular that the crossed product $\SC = \F\ast \ZZ$ can be embedded in a crossed product $\DC_{\F}\ast \ZZ$, and that this ring is an Ore domain.

\begin{lemm}
\label{lemm:tensor_vs_crossed_product2}
    Let $R$ be a (pseudo-)Sylvester domain with universal division $R$-ring of fractions $\DC_R$ and let $R\ast \ZZ$ be a crossed product. Then we can form a crossed product $\DC_R \ast \ZZ$ together with an embedding $R\ast \ZZ \hookrightarrow \DC_R\ast \ZZ$ such that
    \begin{itemize}
        \item[(i)] The composition $R\ast \ZZ \hookrightarrow \DC_R\ast \ZZ \hookrightarrow \Ore(\DC_R \ast \ZZ)$ is a division $R\ast \ZZ$-ring of fractions.
        
        \item[(ii)] The left $R\ast \ZZ$-modules $\DC_R \ast \ZZ$ and $(R\ast \ZZ) \otimes_{R} \DC_R$ (resp. the right  $R\ast \ZZ$-modules $\DC_R \ast \ZZ$ and $\DC_R \otimes_R (R\ast \ZZ)$) are isomorphic.
    \end{itemize}
\end{lemm}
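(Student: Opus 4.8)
The plan is to realize $R\ast\ZZ$ as a skew Laurent polynomial ring, lift the twisting automorphism to $\DC_R$, pass to the Ore localization, and then verify (i) and (ii) by essentially formal arguments. Concretely: by \cref{subsect:crossed_products} I may assume $R\ast\ZZ=R[t^{\pm1};\tau]$ for an automorphism $\tau$ of $R$, and I write $\iota\colon R\hookrightarrow\DC_R$ for the canonical embedding. By \cref{prop:equivalence}, $\iota$ is (identified with) the universal $\Sigma$-inverting homomorphism, where $\Sigma$ is the set of all full (resp.\ stably full) matrices over $R$ — this is the one place where the hypothesis that $R$ is a (pseudo-)Sylvester domain enters. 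A ring automorphism of $R$ preserves $\Sigma$, since a factorization $A=BC$ is carried to $\tau(A)=\tau(B)\tau(C)$ and $\tau(A\oplus I_s)=\tau(A)\oplus I_s$, so the inner rank $\rho$ and the stable rank $\rho^\ast$ are unchanged. Hence $\iota\circ\tau$ is $\Sigma$-inverting and factors uniquely as $\bar\tau\circ\iota$ for a ring homomorphism $\bar\tau\colon\DC_R\to\DC_R$; running the same argument for $\tau^{-1}$ and invoking uniqueness of the factorizations yields $\bar\tau\in\Aut(\DC_R)$ with $\bar\tau\circ\iota=\iota\circ\tau$. I then set $\DC_R\ast\ZZ\coloneqq\DC_R[t^{\pm1};\bar\tau]$, which is a crossed product in the sense of \cref{subsect:crossed_products}, and observe that $r\mapsto\iota(r)$, $t\mapsto t$ defines a ring homomorphism $R\ast\ZZ\hookrightarrow\DC_R\ast\ZZ$ precisely because $\bar\tau$ extends $\tau$; it is injective because $\iota$ is and $\{t^n\}$ is a basis on both sides.

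For part (i): a division ring is left and right Noetherian, so $\DC_R[t^{\pm1};\bar\tau]$ is an Ore domain and $\Ore(\DC_R\ast\ZZ)$ is a division ring, as recalled in \cref{subsect:localization}. The composite $R\ast\ZZ\hookrightarrow\DC_R\ast\ZZ\hookrightarrow\Ore(\DC_R\ast\ZZ)$ is injective, and to see that it is a division $R\ast\ZZ$-ring of fractions I would argue that the division subring it generates contains the image of $R$, hence contains $\DC_R$ (because $\DC_R$ is an epic division $R$-ring), hence contains $t^{\pm1}$ and therefore the whole subring $\DC_R[t^{\pm1};\bar\tau]=\DC_R\ast\ZZ$; a division subring of $\Ore(\DC_R\ast\ZZ)$ containing $\DC_R\ast\ZZ$ must be all of $\Ore(\DC_R\ast\ZZ)$.

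For part (ii): multiplication in $\DC_R\ast\ZZ$ makes it an $(R\ast\ZZ,\DC_R)$-bimodule, so the $R$-balanced map $(s,d)\mapsto sd$ induces a homomorphism of left $R\ast\ZZ$-modules $\phi\colon(R\ast\ZZ)\otimes_R\DC_R\to\DC_R\ast\ZZ$. Since $\{t^n\}_{n\in\ZZ}$ is a basis of $R\ast\ZZ$ as a free right $R$-module, every element of the source is uniquely $\sum_n t^n\otimes d_n$ with $d_n\in\DC_R$ almost all zero; since $\{t^n\}_{n\in\ZZ}$ is also a basis of $\DC_R\ast\ZZ$ as a free right $\DC_R$-module (note $\DC_R t^n=t^n\DC_R$ because $\bar\tau$ is bijective), every element of the target is uniquely $\sum_n t^n d_n$. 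Hence $\phi$ is a bijection and thus an isomorphism. The right-module statement is proved symmetrically, using the map $d\otimes s\mapsto ds$ and the basis $\{t^n\}$ of $R\ast\ZZ$ as a free left $R$-module.

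The only step I expect to require genuine care is the construction of $\bar\tau$: one must be sure that $\DC_R$ really is the universal localization $R_\Sigma$ (which is exactly what \cref{prop:equivalence} supplies) and that the factorization through $R_\Sigma$ is unique, since uniqueness is what forces $\bar\tau$ to be invertible with inverse $\overline{\tau^{-1}}$. Everything after that is bookkeeping with the $\ZZ$-grading.
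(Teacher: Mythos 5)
Your proposal is correct and follows essentially the same route as the paper: lift $\tau$ to $\DC_R$ via the universal property of the localization at the (stably) full matrices, form $\DC_R[t^{\pm1};\bar\tau]$, use the Ore localization for (i), and match bases $t^n\otimes 1\mapsto t^n$ for (ii). The only cosmetic differences are that you obtain invertibility of $\bar\tau$ from $\overline{\tau^{-1}}$ and uniqueness of factorizations (the paper deduces surjectivity from $\DC_R$ being generated by $R$), and you phrase the epicness of the composite in terms of generated division subrings rather than ring epimorphisms — these are equivalent formulations.
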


\begin{proof}
    First, we are going to see that every automorphism $\varphi$ of $R$ extends uniquely to an automorphism of $\DC_R$. Indeed, let $\Sigma$ denote the set of (stably) full matrices over $R$ and notice that $\varphi$ preserves $\Sigma$ (i.e., $\varphi(\Sigma) = \Sigma$). Thus, the composition $R\xrightarrow{\varphi} R \hookrightarrow \DC_R$ is a $\Sigma$-inverting embedding, and hence the universal property of universal localization gives us a unique injective map $\varphi\colon R_{\Sigma} = \DC_R \to \DC_R$ such that the diagram
    \[
   \begin{tikzcd}
    R \arrow[d,"\varphi", "\cong"'] \arrow[r,hook] & \DC_R \arrow[d,dotted,"\tilde \varphi"] \\
    R \arrow[r] \arrow[r,hook] & \DC_R.
   \end{tikzcd}
\]
    commutes. Since $\DC_R$ is generated by $R$ as a division ring, $\tilde \varphi$ is also surjective, and hence an automorphism of $\DC_R$.

    As mentioned in \cref{subsect:crossed_products}, we have a ring isomorphism $R\ast \ZZ \cong R[t^{\pm 1};\tau]$ for some automorphism $\tau$ of $R$, and taking the automorphism $\tilde \tau$ of $\DC_R$ that extends $\tau$, we can form the ring $\DC_{R}[t^{\pm 1};\tilde \tau]$, so that we have a commutative diagram
    \[
   \begin{tikzcd}
    R \arrow[d,hook] \arrow[r,hook] & \DC_R  \arrow[d,hook] \\
    R[t^{\pm 1};\tau] \arrow[r] \arrow[r,hook] & \DC_R[t^{\pm 1};\tilde \tau].
   \end{tikzcd}
\]
   To see that the bottom map is epic, let $S$ be any ring and $f,g\colon \DC_R[t^{\pm 1};\tilde \tau]\rightarrow S$ ring homomorphisms that agree on $R[t^{\pm 1};\tau]$. They induce ring homomorphisms $\DC_R\to S$ that coincide on $R$, and hence, since the embedding $R\hookrightarrow \DC_R$ is epic, $f$ and $g$ agree on $\DC_R$. Since they also agree in the indeterminate, we deduce that $f = g$.
    
   Thus, we have an epic embedding $R\ast \ZZ\cong R[t^{\pm 1};\tau] \hookrightarrow \DC_R[t^{\pm 1};\tilde \tau] = \DC_R*\ZZ$, the latter ring is an Ore domain (see \cref{subsect:localization}) and since the map $\DC_R\ast \ZZ \hookrightarrow \Ore(\DC_R\ast \ZZ)$ is also epic and $\Ore(\DC_R\ast \ZZ)$ is a division ring, the composition $R\ast \ZZ \hookrightarrow \Ore(\DC_R\ast \ZZ)$ is a division $R\ast \ZZ$-ring of fractions. This finishes the proof of $(i)$.\medskip
   
   Finally, note that since $R\ast \ZZ$ is isomorphic to $R[t^{\pm 1};\tau]$ as a ring, $(ii)$ follows from the fact that the left $R[t^{\pm 1};\tau]$-linear map
    \begin{align*}
        R[t^{\pm 1};\tau]\otimes_R \DC_R&\to \DC_R[t^{\pm 1};\tilde \tau] \\
        t^n\otimes \lambda &\mapsto \tilde \tau^n(\lambda)t^n
    \end{align*}
    is an isomorphism since it is also right $\DC_R$-linear and maps the basis $\{t^n \otimes 1\mid n\in\ZZ\}$ to the basis $\{t^n\mid n\in\ZZ\}$. The statement for right modules is proved analogously.
\end{proof}

We are interested in the homological properties of $\DC_{\SC} = \Ore(\DC_{\F}\ast \ZZ)$, to which we will dedicate \cref{subsect:homology_DC_SC}. In the previous lemma we explored the $\SC$-module structure of $\DC_{\F}\ast \ZZ$, while the next one, applied to the case $R\coloneqq\DC_{\F}\ast\ZZ$, $\mathcal O\coloneqq\DC_{\SC}$ and $S \coloneqq \SC$, will allow us later to restrict our attention to $\SC$-submodules of $\DC_{\F}\ast\ZZ$.
\begin{lemm}
    \label{lemm:submodules_ore}
    Let $R$ be a right Ore domain with right Ore localization $\mathcal O$ and $S$ a subring of $R$.
    Then every finitely generated $S$-submodule $M$ of the left $S$-module $\mathcal O$ is isomorphic to a finitely generated $S$-submodule of $R$.
\end{lemm}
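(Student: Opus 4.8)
The plan is to reduce to the classical fact that, over a right Ore domain $R$ with right Ore localization $\mathcal{O}$, any finite subset of $\mathcal{O}$ can be brought over a common denominator. Concretely, let $M = \sum_{i=1}^{k} S m_i$ be a finitely generated $S$-submodule of the left $S$-module $\mathcal{O}$, where $m_1, \ldots, m_k \in \mathcal{O}$. Writing each $m_i$ as a right fraction $m_i = a_i t_i^{-1}$ with $a_i \in R$ and $t_i$ a nonzero element of $R$, I would first show that there is a single nonzero $t \in R$ and elements $b_i \in R$ with $m_i = b_i t^{-1}$ for all $i$ simultaneously. This is the standard ``common right denominator'' argument: the right Ore condition, applied inductively, produces for any two elements $a t^{-1}$ and $a' t'^{-1}$ a common denominator, since one can find $u, u'$ with $t u = t' u' \neq 0$ (using that $R$ is a domain, so the product is nonzero), and then $a t^{-1} = (a u)(t u)^{-1}$, $a' t'^{-1} = (a' u')(t' u')^{-1}$. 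Iterating over $i = 1, \ldots, k$ yields the common $t$.

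Having fixed such a $t$, I would then consider right multiplication by $t$ as a map $\rho_t \colon \mathcal{O} \to \mathcal{O}$, $x \mapsto x t$. Since $\mathcal{O}$ is a division ring (or at least a domain in which $t$ is invertible) and $t \neq 0$, this map is injective; moreover it is visibly left $S$-linear, because $S$ acts on the left and $t$ is multiplied on the right. Its restriction to $M$ is therefore an injective homomorphism of left $S$-modules. It remains to identify the image: $\rho_t(M) = M t = \sum_i S m_i t = \sum_i S b_i t^{-1} t = \sum_i S b_i \subseteq R$. Hence $M \cong M t$ as left $S$-modules, and $Mt$ is a finitely generated $S$-submodule of $R$ (generated by $b_1, \ldots, b_k$), which is exactly what we want.

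I do not expect any serious obstacle here; the only point requiring a little care is the common-denominator step, and in particular making sure the common denominator stays \emph{nonzero} at each stage — this is where the domain hypothesis on $R$ is used (the Ore condition alone only guarantees $tu = t'u'$, and one needs $u$ chosen from the set of nonzero elements, which is automatic since $R \setminus \{0\}$ is the Ore set). One should also note that no finiteness or Noetherian hypothesis on $S$ is needed: the generators $b_i$ of $Mt$ are obtained directly from the given generators $m_i$ of $M$, so the number of generators is preserved. If one wanted to be fully explicit, the induction on $k$ can be phrased as: assuming $m_1, \ldots, m_{k-1}$ already share a denominator $s$, write $m_k = a_k t_k^{-1}$, find $u, v$ with $s u = t_k v \neq 0$, and replace $s$ by $s u$; but this level of detail is routine and I would leave it to the reader.
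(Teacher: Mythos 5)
Your proposal is correct and follows essentially the same route as the paper's own proof: bring the generators over a common right denominator $t$ via the right Ore condition, then observe that right multiplication by $t$ is an injective left $S$-linear map carrying $M$ isomorphically onto a finitely generated $S$-submodule of $R$. The details you flag (keeping the common denominator nonzero, injectivity of $\rho_t$ because $\mathcal{O}$ is a division ring) are exactly the points the paper also checks.
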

\begin{proof}
    Let $M$ be generated as a left $S$-module by $x_1,\ldots,x_m\in \mathcal O$.
    We find $p_{i}, q_{i}\in R$ such that $x_{i}=p_{i} q_{i}^{-1}$ for $i=1,\ldots,m$. If $m\ge 2$ we can use the Ore condition to find non-zero $a, b\in R$ such that $q_{1}a=q_{2}b$, and hence $x_1=(p_{1}a)(q_{1}a)^{-1}$ and $x_2=(p_{2}b)(q_{2}b)^{-1}$ can be expressed as fractions with common denominators.
    By repeatedly applying this procedure we produce $p_{i}', q\in R, q\ne 0$ such that $x_{i}=p'_{i}q^{-1}$ for all $i$.

    We now consider the left $S$-submodule $M'$ of $R$ generated by $x_1q, \ldots,x_mq$.
    The map $f\colon M\to M'$ given by $y\mapsto yq$ is $S$-linear since $\mathcal O$ is associative and surjective since its image contains the generators.
    Finally, it is injective, since $\mathcal O$ is a division ring and hence $zq\ne0$ for every $z \ne 0$.
    We conclude that $f$ is an $S$-linear isomorphism.
\end{proof}

\subsection{Homological recognition principles for (pseudo-)Sylvester domains} \label{subsect:criteria}

As mentioned above, we are going to use the next two theorems to prove \cref{theo:main_fir_crossed_ZZ}. The first one is a direct consequence of the new characterization of Sylvester domains \cite{Jaikin2019_sylvester}*{Proposition~2.2 \& Theorem 2.4} provided by Jaikin-Zapirain.

\begin{theo}
   \label{theo:sylvester_criterion}
    Let $R\hookrightarrow \DC$ be a division $R$-ring of fractions.
    Assume that
    \begin{enumerate}
        \item $\Tor_1^R(\DC, \DC) = 0$ and
        \item for any finitely generated left or right $R$-submodule $M$ of $\DC$ and any exact sequence $0\to J\to R^n\to M\to 0$, the $R$-module $J$ is free of finite rank.
    \end{enumerate}
    Then $R$ is a Sylvester domain and $\DC$ is the universal division $R$-ring of fractions.
\end{theo}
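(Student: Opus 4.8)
The plan is to reduce the statement to the homological characterization of Sylvester domains due to Jaikin-Zapirain, \cite{Jaikin2019_sylvester}*{Proposition~2.2 \& Theorem~2.4}, by checking that hypotheses (1) and (2) supply exactly the input needed there. It is convenient to keep in mind the dictionary between epic division $R$-rings and rank functions: since $\DC$ is a division ring (and hence stably finite), $R$ is stably finite as a subring of $\DC$, and the embedding $R\hookrightarrow\DC$ pulls back the rank of $\DC$ to a Sylvester matrix rank function on $R$, which automatically satisfies Sylvester's law of nullity because it is a pullback along a ring homomorphism. Writing $\rk_{\DC}(A)$ for the rank over $\DC$ of (the image of) a matrix $A$ over $R$, one always has $\rk_{\DC}(A)\le\rho(A)$, with $\DC$ the universal division $R$-ring of fractions precisely when this rank function is maximal among Sylvester matrix rank functions on $R$, and with $R$ a Sylvester domain precisely when $\rho=\rk_{\DC}$, so that the inner rank inherits the law of nullity.

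In this language, hypothesis (1), $\Tor_1^R(\DC,\DC)=0$, is the flatness-type condition that feeds the universality half of the criterion: it is what \cite{Jaikin2019_sylvester}*{Proposition~2.2} turns into the conclusion that $\DC$ is the universal division $R$-ring of fractions (equivalently, that $\rk_{\DC}$ is the maximal Sylvester matrix rank function, or that $R\to\DC$ presents $\DC$ as a universal localization of $R$). Hypothesis (2) supplies the finiteness needed in \cite{Jaikin2019_sylvester}*{Theorem~2.4}: requiring that for every finitely generated submodule $M$ of $\DC$, on either side, and every presentation $0\to J\to R^n\to M\to 0$ the module $J$ be free of finite rank is what forces $\rho$ to agree with $\rk_{\DC}$, and hence promotes the previous step to the statement that $R$ is a (projective-free, weak dimension $\le 2$) Sylvester domain. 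The left--right symmetry built into (2) is exactly what the law of nullity demands, since that law involves a matrix read both as a map of left modules and as a map of right modules; it is also consistent with the left--right symmetry of weak dimension and with \cref{rema:left_right}.

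I expect the only point that needs real care to be matching the shape of (2) with the hypothesis literally used in \cite{Jaikin2019_sylvester}*{Theorem~2.4}. If that hypothesis is phrased for finitely generated submodules of free modules $\DC^{n}$ rather than of $\DC$, one has to argue that the rank-one case already suffices. The route I would take is: first observe that (2) forces $R$ to be two-sided coherent, since every finitely generated one-sided ideal of $R$ sits inside $R\subseteq\DC$ and is therefore finitely presented; then, given a finitely generated $M\subseteq\DC^{n}$, filter it by the submodules $M\cap\DC^{i}$, whose successive subquotients embed into $\DC$, and run an induction on $n$, using that finite presentability descends along these subquotients (by coherence) and that extensions by finite free modules split, to reduce to the case $n=1$ covered by (2). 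The subtle part of this reduction is that it only yields a priori a kernel $J$ that is finitely generated \emph{stably} free, so one must check either that the formulation in \cite{Jaikin2019_sylvester} is insensitive to this or that it is already stated for submodules of $\DC$; in the latter case no reduction is needed and the theorem is an immediate consequence of \cite{Jaikin2019_sylvester}*{Proposition~2.2 \& Theorem~2.4}.
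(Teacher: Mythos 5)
Your proposal matches the paper's treatment: the paper gives no independent proof of this statement, deriving it directly from \cite{Jaikin2019_sylvester}*{Proposition~2.2 \& Theorem~2.4} exactly as you propose, so the core of your argument is the same. The reduction machinery you sketch in your final paragraph (coherence, filtration by $M\cap\DC^{i}$, the stably-free caveat) is not needed and is not invoked by the paper, which treats the theorem as an immediate consequence of the cited results and instead writes out a full self-contained argument only for the pseudo-Sylvester analogue, \cref{theo:pseudosylvester_criterion}.
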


The second theorem is an analogue for pseudo-Sylvester domains, involving stably free modules instead of free modules.
The proof proceeds similarly, but we include it here for the sake of completeness.
Given an embedding $R\hookrightarrow \DC$ of $R$ into a division ring and a matrix $A$ over $R$, we will denote by $\rk_{\DC}(A)$ the usual $\DC$-rank of $A$ considered as a matrix over $\DC$.
Similarly, if $M$ is a left $R$-module, we take $\dim_{\DC}(M)$ to denote the $\DC$-dimension $\dim_{\DC}(\DC\otimes_R M)$ of the left $\DC$-module $\DC \otimes_R M$.

\begin{theo}
   \label{theo:pseudosylvester_criterion}
    Let $R\hookrightarrow \DC$ be a division $R$-ring of fractions.
    Assume that
    \begin{enumerate}
        \item $\Tor_1^R(\DC, \DC) = 0$ and
        \item for any finitely generated left or right $R$-submodule $M$ of $\DC$ and any exact sequence $0\to J\to R^n\to M\to 0$, the $R$-module $J$ is finitely generated stably free.
    \end{enumerate}
    Then $R$ is a pseudo-Sylvester domain and $\DC$ is the universal division $R$-ring of fractions.
\end{theo}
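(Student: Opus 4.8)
The plan is to follow the proof of \cref{theo:sylvester_criterion} (i.e.\ of \cite{Jaikin2019_sylvester}*{Proposition~2.2 \& Theorem~2.4}) with ``free'' replaced by ``stably free'' and ``full'' by ``stably full'' throughout. Since $R$ is a subring of the division ring $\DC$ it is stably finite, so by \cref{prop:equivalence} it suffices to prove that every stably full matrix over $R$ becomes invertible over $\DC$; the ``moreover'' part of that proposition then automatically identifies $\DC$ as the universal division $R$-ring of fractions. I would prove the contrapositive: if an $n\times n$ matrix $A$ over $R$ has $\rk_\DC(A)=r<n$, then $\rho^\ast(A)<n$.

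The first step is the homological input. Writing $\DC$ as the directed union of its finitely generated $R$-submodules, each of which has projective dimension at most $1$ by assumption~(2), and using that flat dimension does not increase along directed colimits (via \cref{lemm:tor_rotman} and the fact that $\Tor$ commutes with directed colimits), one obtains $\fld_R(\DC)\le 1$ as a left and as a right $R$-module. Feeding this together with assumption~(1) into the long exact $\Tor$-sequence of $0\to M\to\DC\to\DC/M\to 0$ gives $\Tor_1^R(\DC,M)=0$ for every finitely generated left or right $R$-submodule $M\subseteq\DC$. Hence, for any exact sequence $0\to J\to R^n\to M\to 0$ with such an $M$, applying $\DC\otimes_R-$ keeps it exact, so that $\dim_\DC(M)=n-\rk_{sf}(J)$ (with the convention $\rk_{sf}(0)\coloneqq 0$); in particular $\rk_{sf}(J)\le n-1$ whenever $M\ne 0$, since a nonzero $R$-submodule of $\DC$ generates $\DC$ as a $\DC$-module and therefore has $\dim_\DC\ge 1$.

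Now take $A$ as above, regard it as a map $\varphi_A\colon R^n\to R^n$, and set $M\coloneqq\coker(\varphi_A)$. Right-exactness of $\DC\otimes_R-$ identifies $\DC\otimes_R M$ with $\coker(A\colon\DC^n\to\DC^n)$, of $\DC$-dimension $n-r>0$, so the tensor-hom adjunction yields $\Hom_R(M,\DC)\cong\Hom_\DC(\DC\otimes_R M,\DC)\ne 0$. Choosing a nonzero $f\colon M\to\DC$ and setting $M_1\coloneqq f(M)$, a nonzero finitely generated $R$-submodule of $\DC$, the composite $R^n\twoheadrightarrow M\xrightarrow{f}M_1$ is a presentation $0\to J_1\to R^n\to M_1\to 0$ with $\im(\varphi_A)\subseteq J_1$. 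By the previous step $J_1$ is finitely generated stably free with $\rk_{sf}(J_1)\le n-1$, say $J_1\oplus R^a\cong R^b$ where $b=\rk_{sf}(J_1)+a\le n-1+a$. Since $\im(\varphi_A)\subseteq J_1$, the map $\varphi_A$ factors as $R^n\to J_1\hookrightarrow R^n$, hence $\varphi_{A\oplus I_a}=\varphi_A\oplus\id_{R^a}$ factors through $J_1\oplus R^a\cong R^b$ and so $\rho(A\oplus I_a)\le b\le n-1+a$. Finally \cref{lemm:stable_rank:additive,lemm:stable_rank:inequality} give $\rho^\ast(A)=\rho^\ast(A\oplus I_a)-a\le\rho(A\oplus I_a)-a\le n-1<n$, which is what we wanted.

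The crux is the last paragraph: $\varphi_A$ itself need not factor through a free module of rank $<n$ — that would force $R$ to be a genuine Sylvester domain — and one must instead absorb the stably-free defect of the syzygy $J_1$ into the extra free summand $R^a$ before factoring, which is precisely the phenomenon distinguishing pseudo-Sylvester from Sylvester domains. Everything else is the homological bookkeeping that makes $\dim_\DC$ computable in terms of $\rk_{sf}$, and that is where hypotheses~(1) and~(2) are used.
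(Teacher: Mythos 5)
Your proposal is correct and follows essentially the same route as the paper's proof: reduce via \cref{prop:equivalence} to inverting stably full matrices, map the cokernel of $A$ onto a nonzero finitely generated $R$-submodule $M_1$ of $\DC$, factor $A\oplus I_a$ through the free module $J_1\oplus R^a$, and use hypotheses (1) and (2) to show $\rk_{sf}(J_1)=\dim_{\DC}(J_1)<n$. The only differences are cosmetic: you argue by contrapositive rather than by contradiction, and you obtain the nonzero map $M\to\DC$ via the tensor--hom adjunction instead of composing $N\to\DC\otimes_R N\cong\DC^k$ with a projection.
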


\begin{proof}
 Notice that by \cref{prop:equivalence} it suffices to show that every stably full matrix over $R$ becomes invertible over $\DC$. Thus, let $A$ be an $n\times n$ matrix over $R$ with $\rho^{\ast}(A) = n$, and assume that $A$ is not invertible over $\DC$, i.e., $\rk_{\DC}(A)<n$.
 Since $R$ is a subring of a division ring, it is necessarily stably finite.

 Let $N$ be the left $R$-module $N = R^{n}/R^nA$. Then $A$ is also the presentation matrix of $\DC\otimes_R N$, and therefore $\dim_{\DC}(N) = n-\rk_{\DC}(A)$, which is finite and positive. This implies that $\DC \otimes_R N \cong \DC^k$ as $\DC$-modules for some $k\ge 1$ and, thus, composing the $R$-homomorphism $N\to \DC\otimes_R N$ given by $x\to 1\otimes x$ with an appropriate projection, we obtain a non-trivial $R$-homomorphism $N\to \DC$. Therefore, if $M$ is the image of this map, the surjection $N\to M$ gives us a commutative diagram with exact rows:
\[
   \begin{tikzcd}
    0 \arrow[r] & R^nA \arrow[r] \arrow[d, dotted] & R^n \arrow[r] \arrow[d, equal] & N \arrow[r] \arrow[d] & 0   \\
    0 \arrow[r] & J \arrow[r] & R^n \arrow[r] & M \arrow[r] & 0.
   \end{tikzcd}
\]
Here, $J$ is the kernel of the map $R^n\to M$ and the dotted arrow is such that the left square commutes (cf.\ \cite{Rotman2009}*{Proposition 2.71}) and therefore injective.
Moreover, notice that $\DC\otimes_R M$ is non-trivial since the multiplication map to $\DC$ is non-trivial.
We conclude that $\dim_{\DC}(M)>0$.

Now we have by $(2)$ that $J$ is stably free, i.e., there exists $s\geq 0$ such that $J\oplus R^s$ is free.
Moreover, since $J$ is finitely generated and $R$, as a subring of a division ring, is stably finite, we conclude that $J\oplus R^s\cong R^{\rk_{sf}(J)+s}$.
In fact, we obtain that $\rk_{sf}(J) = \dim_{\DC}(J)$ by applying $\DC \otimes_R \square$.
Notice also that the previous diagram remains exact and commutative if we add $0\to R^s \to R^s\to 0 \to 0$ to both rows. Thus, setting $t\coloneqq \dim_{\DC}(J)$, the situation can be summarized in the following commutative diagram:
\[
\begin{tikzcd}[column sep=large, row sep=large, ar symbol/.style = {draw=none,"\textstyle#1" description,sloped}, isomorphic/.style = {ar symbol={\cong}}]
   &[-37pt] R^{n+s} \arrow[d, "r_{A\oplus I_s}"', two heads] \arrow[dr, "r_{A\oplus I_s}"]\\
   & R^n A\arrow[d, hook]\oplus R^s \arrow[r, hook] & R^{n+s} \arrow[d, equal] \\
   R^{t+s}\arrow[r, isomorphic] & J\oplus R^s\arrow[r, hook] & R^{n+s}.
\end{tikzcd}
\]

Here, $r_{A\oplus I_s}$ denotes the homomorphism given by right multiplication by $A\oplus I_s$, so that all maps except the isomorphism behave identically on the $R^s$ summand.
In terms of matrices, this factorization of $r_{A\oplus I_s}$ allows us to express $A\oplus I_s$ as a product of two matrices of dimensions $(n+s)\times (t+s)$ and $(t+s)\times (n+s)$, respectively.
Thus, $\rho(A\oplus I_s)\leq t+s$ right by definition.
Since $A$ is stably full, we have $\rho(A\oplus I_s) = n+s$ for every $s$, so we conclude that $n\le t$.

We are going to show on the other hand that $t<n$, a contradiction. Observe first that the condition (2) tells us in particular that the flat (in fact, projective) dimension of any finitely generated right $R$-submodule of $\DC$ is at most 1.
Hence, using \cref{lemm:tor_rotman} and the fact that $\Tor$ commutes with directed colimits (cf.\ \cite{Rotman2009}*{Proposition~7.8}), we obtain that for any left $R$-module $Q$,
\[\Tor_{2}^R(\DC, Q) = \Tor_{2}^R\left( \lim_{\longrightarrow} L_i,Q\right) \cong \lim_{\longrightarrow} \Tor_2^R (L_i,Q) = 0,\]
where $L_i$ runs through all finitely generated $R$-submodules of the right $R$-module $\DC$. Again by \cref{lemm:tor_rotman}, this means that $\DC$ itself has flat dimension at most 1 as a right $R$-module.

Now, since $M$ is an $R$-submodule of $\DC$, we have an exact sequence of left $R$-modules $0\to M \to \DC \to Q\to 0$ for some left $R$-module $Q$, and hence, applying $\DC\otimes_R \square$ we can construct a long exact sequence containing the following exact part:
\[ \cdots \to \Tor_2^R(\DC, Q)\to \Tor_1^R(\DC, M) \to \Tor_1^R(\DC,\DC) \to \cdots.\]
The first term is trivial by the previous argument, while the third term is trivial because of (1).
Thus, we deduce that $\Tor_1^R(\DC, M) = 0$. From here, it follows that applying $\DC \otimes_R \square$ to the exact sequence $0\to J \to R^n\to M \to 0$ returns an exact sequence of left $\DC$-modules
\[0\to \DC\otimes_R J \to \DC^n\to \DC\otimes_R M \to 0,\]
from which we obtain
\[
\begin{matrix*}[l]
  t = \dim_{\DC}(J) = n - \dim_{\DC}(M) < n.
\end{matrix*}
 \]
This is the desired contradiction, which shows that necessarily $\rk_{\DC}(A) = n$.
\end{proof}

In the case of $\F\ast \ZZ$, the role of $\DC$ will be played by the Ore division ring of fractions $\DC_{\SC} = \Ore(\DC_{\F}\ast \ZZ)$.

\subsection{The homological properties of \texorpdfstring{$\DC_{\SC}$}{D\_S} and the proof of Theorem A}
\label{subsect:homology_DC_SC}

We will now study the homological properties of the $\SC$-module $\DC_{\SC}$ and its submodules.
In particular, we will derive vanishing results for $\Tor$ and $\Ext$, which will allow us to verify condition (1) and a weak version of condition (2) of \cref{theo:pseudosylvester_criterion,theo:sylvester_criterion}. From this, we will finally derive \cref{theo:main_fir_crossed_ZZ}.

The following theorem, which combines Theorem~4.7 and 4.8 of \cite{Schofield1985}, will be very useful in verifying condition (1):
\begin{theo}
    \label{theo:schofield}
    Let $R\to S$ be an epic ring homomorphism. Then the following are equivalent:
    \begin{enumerate}
        \item $\Tor_1^R(S, S)=0$.
        \item $\Tor_1^R(M, N)=\Tor_1^{S}(M, N)$ for every right $S$-module $M$ and every left $S$-module $N$.
        \item $\Ext_R^1(M, M')=\Ext_{S}^1(M, M')$ for all right $S$-modules $M$ and $M'$.
        \item $\Ext_R^1(N, N')=\Ext_{S}^1(N, N')$ for all left $S$-modules $N$ and $N'$.
    \end{enumerate}
    If $S=R_\Sigma$ is a universal localization of $R$, then all of these properties are satisfied.
\end{theo}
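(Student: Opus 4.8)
The whole statement is formal homological algebra resting on one classical fact about ring epimorphisms, so I would organize the argument around that fact. Recall that $R\to S$ is epic precisely when the multiplication map $\mu\colon S\otimes_R S\to S$ is an isomorphism of $S$-bimodules (see, e.g., \cite{Schofield1985}*{Chapter~4}). Consequently, for every right $S$-module $M$ the map $M\otimes_R S\to M$, $m\otimes s\mapsto ms$, is an isomorphism, since it factors as $M\otimes_R S\cong M\otimes_S(S\otimes_R S)\xrightarrow{1\otimes\mu}M\otimes_S S=M$; dually $S\otimes_R N\cong N$ for every left $S$-module $N$, and there is the tensor--hom adjunction isomorphism $\Hom_S(X\otimes_R S,Y)\cong\Hom_R(X,Y)$ for an $R$-module $X$ and an $S$-module $Y$. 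These are the degree-$0$ shadows of (2), (3), (4), and the plan is to promote them to degree $1$.

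The pivot is the following consequence of (1): if $\Tor_1^R(S,S)=0$ then $\Tor_1^R(M,S)=0$ for every right $S$-module $M$ (and symmetrically $\Tor_1^R(S,N)=0$ for every left $S$-module $N$). Indeed, choose a short exact sequence $0\to K\to S^{(I)}\to M\to 0$ with $S^{(I)}$ a free right $S$-module; since $\Tor$ commutes with direct sums, $\Tor_1^R(S^{(I)},S)=0$, so the long exact sequence identifies $\Tor_1^R(M,S)$ with the kernel of $K\otimes_R S\to S^{(I)}\otimes_R S$, which by naturality of the degree-$0$ isomorphism is the injection $K\hookrightarrow S^{(I)}$. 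Granting this, $(1)\Rightarrow(2),(3),(4)$ runs as follows: given a right $S$-module $M$, pick a projective presentation $0\to K\to P\to M\to 0$ over $R$ and apply $\square\otimes_R S$; the vanishing $\Tor_1^R(M,S)=0$ together with $M\otimes_R S\cong M$ produces a short exact sequence $0\to K\otimes_R S\to P\otimes_R S\to M\to 0$ in which $P\otimes_R S$ is a projective right $S$-module. Feeding this into the long exact sequences for $\Tor^S(\square,N)$ and $\Ext_S^1(\square,Y)$ and using $(X\otimes_R S)\otimes_S N\cong X\otimes_R N$ and the adjunction isomorphism, one computes $\Tor_1^S(M,N)=\ker(K\otimes_R N\to P\otimes_R N)=\Tor_1^R(M,N)$ and $\Ext_S^1(M,Y)=\coker(\Hom_R(P,Y)\to\Hom_R(K,Y))=\Ext_R^1(M,Y)$; the left-module assertions are identical with the sides interchanged.

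For the converses, $(2)\Rightarrow(1)$ is immediate on setting $M=N=S$, since $\Tor_1^S(S,S)=0$. For $(3)\Rightarrow(1)$ (and symmetrically $(4)\Rightarrow(1)$) I would rerun the previous construction with $M=S$: from $0\to K\to P\to S\to 0$ over $R$ one obtains $0\to K''\to P\otimes_R S\to S\to 0$ over $S$ together with a surjection $q\colon K\otimes_R S\twoheadrightarrow K''$ whose kernel is exactly $\Tor_1^R(S,S)$. The hypothesis $\Ext^1_R(S,Y)=\Ext^1_S(S,Y)=0$ for all right $S$-modules $Y$ forces the restriction map $\Hom_R(P,Y)\to\Hom_R(K,Y)$ to be surjective, while the factorization through $\Hom_S(P\otimes_R S,Y)$ shows it factors through $q^\ast\colon\Hom_S(K'',Y)\hookrightarrow\Hom_R(K,Y)$; hence $q^\ast$ is onto for every $Y$. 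Testing against $Y=K\otimes_R S$ with the identity map then forces $\ker q=\Tor_1^R(S,S)=0$.

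It remains to treat $S=R_\Sigma$. That $R\to R_\Sigma$ is epic is immediate from the universal property: two ring maps out of $R_\Sigma$ that agree on $R$ also agree on the adjoined entries of the formal inverses of the matrices in $\Sigma$, since those entries are determined by the images of the matrix entries. The vanishing $\Tor_1^R(R_\Sigma,R_\Sigma)=0$ is \cite{Schofield1985}*{Theorem~4.8}: from the presentation of $R_\Sigma$ obtained by adjoining, for each $A\in\Sigma$ of size $n$, an $n\times n$ matrix $A'$ of new variables subject to $AA'=A'A=I_n$, one reads off that the module of relations embeds into the free module it maps to, which is precisely this $\Tor_1$-vanishing. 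I expect this last point — the $\Tor_1$-vanishing for universal localizations — to be the only genuinely non-formal ingredient; within the equivalence of (1)--(4), the one step demanding care is checking that the base-changed two-term complex $0\to K\otimes_R S\to P\otimes_R S\to M\to 0$ really computes $\Tor_1^S$ and $\Ext_S^1$, which is exactly where the lemma on $\Tor_1^R(M,S)$ enters.
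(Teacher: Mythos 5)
The paper does not actually prove this statement: it is imported wholesale as a combination of Theorems~4.7 and~4.8 of \cite{Schofield1985}, so there is no internal argument to compare yours against. Your reconstruction of the equivalence of (1)--(4) is correct and is essentially the standard one: the degree-zero isomorphisms $M\otimes_R S\cong M$, $S\otimes_R N\cong N$ and the adjunction $\Hom_S(X\otimes_R S,Y)\cong\Hom_R(X,Y)$ coming from $S\otimes_R S\cong S$, the bootstrap from $\Tor_1^R(S,S)=0$ to $\Tor_1^R(M,S)=0$ for all right $S$-modules $M$, the resulting transport of an $R$-projective presentation of $M$ to an $S$-projective presentation, and the converse directions via $M=N=S$ and via testing $q^\ast$ against $Y=K\otimes_R S$ are all sound. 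What your argument does \emph{not} supply is the one non-formal ingredient, namely $\Tor_1^R(R_\Sigma,R_\Sigma)=0$ for a universal localization: the sentence ``one reads off that the module of relations embeds into the free module it maps to'' is not a proof (the presentation of $R_\Sigma$ by generators and relations does not obviously yield flatness-type information in degree one), and this is precisely the content of Schofield's Theorem~4.7, whose proof takes real work. Since you flag this and defer it to the citation -- which is no more than the paper itself does for the entire theorem -- the proposal is acceptable as written, but you should be aware that the deferred step is where the actual mathematical substance of the final assertion lies.
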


The importance of this theorem in our paper is given by the fact that, since firs are Sylvester domains, the universal division $\F$-ring of fractions $\DC_{\F}$ is precisely the universal localization of $\F$ with respect to the set of all full matrices. Therefore, each of the statements in \cref{theo:schofield} holds for the epic embedding $\F\hookrightarrow \DC_{\F}$, and this will serve as the starting point for the proof of the main result. The other crucial property in our setting is the following.

\begin{lemm} \label{lemm:global_dimension2}
    Let $R$ be a ring of right (resp.\ left) global dimension at most 1.
    Then any crossed product $R\ast\ZZ$ has right (resp.\ left) global dimension at most 2.
    In particular, if $\F$ is a fir, then $\F\ast\ZZ$ has right and left global dimension at most 2.
\end{lemm}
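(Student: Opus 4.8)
The statement to prove is: if $R$ has right (resp.\ left) global dimension at most $1$, then any crossed product $R\ast\ZZ$ has right (resp.\ left) global dimension at most $2$; in particular $\F\ast\ZZ$ has global dimension at most $2$ on both sides when $\F$ is a fir.

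The plan is to use the identification $R\ast\ZZ \cong R[t^{\pm 1};\tau]$ from \cref{subsect:crossed_products} and exploit the standard short exact sequence of $(R\ast\ZZ, R\ast\ZZ)$-bimodules
\[
0 \to (R\ast\ZZ)\otimes_R (R\ast\ZZ) \xrightarrow{\ \alpha\ } (R\ast\ZZ)\otimes_R (R\ast\ZZ) \to R\ast\ZZ \to 0,
\]
where the right-hand map is multiplication and $\alpha$ is the bimodule map $x\otimes y \mapsto xt\otimes y - x\otimes ty$ (this is the change-of-rings/Rees-type resolution that realises $R\ast\ZZ$ as a module of projective dimension $\le 1$ over $(R\ast\ZZ)\otimes_R(R\ast\ZZ)$, reflecting the fact that $\ZZ$ has cohomological dimension $1$). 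First I would verify that this sequence is exact; the key point is that $R\ast\ZZ$ is free as a left (and right) $R$-module on $\{t^n\}_{n\in\ZZ}$, so the tensor products over $R$ are concretely $\bigoplus_{n} (R\ast\ZZ)t^n$ and $\alpha$ becomes an explicit "telescoping" map whose injectivity and whose cokernel one can compute directly on this basis.

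Next, from this bimodule resolution I would derive, for any right $R\ast\ZZ$-module $M$ and any left $R\ast\ZZ$-module $N$, a long exact sequence (or a short one, given the length-$1$ resolution) relating $\Ext^{*}_{R\ast\ZZ}(M,N)$ to $\Ext^{*}_{R}(M,N)$ — essentially the Shapiro-type / change-of-rings spectral sequence that here degenerates because the resolution has length $1$. Concretely, applying $\Hom_{R\ast\ZZ}(-,N)$ after tensoring a projective resolution of $M$ suitably, one gets that $\Ext^{k}_{R\ast\ZZ}(M,N)$ fits into an exact sequence with terms $\Ext^{k}_{R}(M,N)$ and $\Ext^{k-1}_{R}(M,N)$. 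Since $R$ has right global dimension $\le 1$, the groups $\Ext^{k}_{R}(M,N)$ vanish for $k\ge 2$, and feeding this into the long exact sequence forces $\Ext^{k}_{R\ast\ZZ}(M,N)=0$ for $k\ge 3$, i.e.\ $\Ext^{3}_{R\ast\ZZ}(M,N)=0$ for all $M,N$, which by the standard characterisation of global dimension (via \cref{lemm:ext_rotman}, taking suprema over all modules) gives right global dimension $\le 2$. The left-handed statement is entirely symmetric, using the analogous bimodule sequence with the roles of left/right interchanged. For the final sentence, one just recalls that a fir, being in particular a ring in which every left and right ideal is free, has left and right global dimension at most $1$ (every submodule of a free module is free, so every module has a length-$1$ free resolution), and applies the first part.

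The main obstacle I expect is bookkeeping: getting the bimodule resolution stated and checked correctly (in particular the exactness of $\alpha$ and the precise form of the connecting maps when one is careful about the crossed-product twist $\tau$ rather than an honest polynomial ring), and then tracking left vs.\ right module conventions so that "right global dimension of $R$" really is what controls "right global dimension of $R\ast\ZZ$". An alternative, possibly cleaner, route avoiding explicit bimodule resolutions is to argue directly: given a right $R\ast\ZZ$-module $M$, restrict to $R$, take a length-$\le 1$ projective $R$-resolution $0\to P_1\to P_0\to M\to 0$, induce up along $R\hookrightarrow R\ast\ZZ$ (which is flat, even free, on both sides) to get projective $R\ast\ZZ$-modules, compare $M$ with $M\otimes_R(R\ast\ZZ)$ via the counit, and use that the "difference" is again controlled by the cohomological dimension $1$ of $\ZZ$ — I would present whichever of these the subsequent text makes most convenient, but the Ext–long-exact-sequence computation from the Rees bimodule resolution is the most self-contained.
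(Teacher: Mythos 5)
Your proposal is correct in substance but takes a genuinely different route from the paper: the paper's proof is a one-line citation of \cite{MR2001}*{7.5.6~Corollary~(ii)} for the inequality between the global dimensions of $R$ and $R\ast\ZZ$, plus the observation that a fir has left and right global dimension at most $1$ since every submodule of a free module is free, whereas you reprove the change-of-rings bound from scratch. Your strategy --- the length-one resolution of $S=R\ast\ZZ$ by two copies of $S\otimes_R S$, which splits as a sequence of left $S$-modules (all three terms being free on that side) and hence stays exact after applying $M\otimes_S\square$; then the $\Ext$ long exact sequence together with the identification $\Ext^k_S(M\otimes_R S,N)\cong\Ext^k_R(M,N)$ coming from the fact that $\square\otimes_R S$ is exact and preserves projectives --- is essentially the standard proof of the cited result, and the deduction $\Ext^3_S(M,N)=0$ from right global dimension of $R$ being at most $1$, as well as the left/right bookkeeping, is sound. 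One concrete correction is needed: the map $x\otimes y\mapsto xt\otimes y-x\otimes ty$ is \emph{not} $R$-balanced when the twist $\tau$ is nontrivial (compare the images of $xr\otimes y$ and of $x\otimes ry$, using $rt=t\tau^{-1}(r)$); you must instead take $\alpha(x\otimes y)=xt\otimes t^{-1}y-x\otimes y$, which is balanced since $xrt\otimes t^{-1}y=xt\tau^{-1}(r)\otimes t^{-1}y=xt\otimes t^{-1}ry$, and for which the telescoping computation of injectivity and of $\im\alpha=\ker\mu$ on the basis $\{t^n\otimes 1\}$ goes through verbatim. You anticipated exactly this pitfall, and with that fix your argument is complete; what it buys over the paper's version is self-containedness, at the cost of redoing an argument available in the literature.
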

\begin{proof}
    This can be found in \cite{MR2001}*{7.5.6~Corollary~(ii)} noting the symmetry in the definition of a crossed product. The last statement follows because firs have right and left global dimension at most 1.
\end{proof}

We are now ready to study the homological properties of $\DC_{\SC}$ and its submodules.

\begin{lemm}
    \label{lemm:ext_vanishing2} \leavevmode
    \begin{enumerate}[label={(\arabic*)},ref={\thetheo~(\arabic*)}]
        \item $\Ext_{\SC}^3(M, M')=0$ for all left (resp.\ right) $\SC$-modules $M$ and $M'$.
        \item \label[lemmenum]{lemm:ext_vanishing:dc_f_z_2} $\DC_{\F}\ast\ZZ$ has projective dimension at most 1 as a left and right $\SC$-module.

        \item \label[lemmenum]{lemm:ext_vanishing:sub_dc_f_z_2} Every left or right $\SC$-submodule of $\DC_{\F}\ast \ZZ$ has projective dimension at most 1.

        \item \label[lemmenum]{lemm:ext_vanishing:projective_dimension_1} Every finitely generated left or right $\SC$-submodule of $\DC_{\SC}$ has projective dimension at most 1.
    \end{enumerate}
\end{lemm}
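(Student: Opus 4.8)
The plan is to prove the four statements in the stated order, each building on the previous ones together with \cref{lemm:global_dimension2,lemm:tensor_vs_crossed_product2,lemm:submodules_ore}. Statement (1) is immediate: by \cref{lemm:global_dimension2} the crossed product $\SC=\F\ast\ZZ$ has left and right global dimension at most $2$, so every left (resp.\ right) $\SC$-module has projective dimension at most $2$, which by \cref{lemm:ext_rotman} is precisely the vanishing of $\Ext^3_{\SC}(M,M')$ for all $M'$.

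For statement (2) I would exploit that a fir has left and right global dimension at most $1$, so $\DC_{\F}$ admits a projective resolution $0\to P_1\to P_0\to\DC_{\F}\to 0$ of length $1$ as a left $\F$-module. Since the crossed product $\SC$ is free, hence flat, as a right $\F$-module, applying $\SC\otimes_{\F}\square$ gives an exact sequence $0\to\SC\otimes_{\F}P_1\to\SC\otimes_{\F}P_0\to\SC\otimes_{\F}\DC_{\F}\to 0$ in which each $\SC\otimes_{\F}P_i$ is a projective left $\SC$-module, being a direct summand of a free one. By \cref{lemm:tensor_vs_crossed_product2}(ii) the left $\SC$-module $\SC\otimes_{\F}\DC_{\F}$ is isomorphic to $\DC_{\F}\ast\ZZ$, whence $\prd_{\SC}(\DC_{\F}\ast\ZZ)\le 1$. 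The right-module assertion follows in the same way, using instead the right $\SC$-module isomorphism $\DC_{\F}\ast\ZZ\cong\DC_{\F}\otimes_{\F}\SC$ from \cref{lemm:tensor_vs_crossed_product2}(ii), the flatness of $\SC$ as a left $\F$-module, and the right global dimension of $\F$.

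For statement (3), given a left (resp.\ right) $\SC$-submodule $M\subseteq\DC_{\F}\ast\ZZ$ with quotient $Q$, I would apply $\Hom_{\SC}(\square,M')$ to the short exact sequence $0\to M\to\DC_{\F}\ast\ZZ\to Q\to 0$ and read off from the long exact $\Ext$-sequence that $\Ext^2_{\SC}(M,M')$ sits between $\Ext^2_{\SC}(\DC_{\F}\ast\ZZ,M')=0$, which holds by (2), and $\Ext^3_{\SC}(Q,M')=0$, which holds by (1); hence it vanishes for every $M'$, and \cref{lemm:ext_rotman} gives $\prd_{\SC}(M)\le 1$. Finally, statement (4) reduces to (3): the ring $\DC_{\F}\ast\ZZ$ is a left and right Ore domain with Ore localization $\DC_{\SC}$, so by \cref{lemm:submodules_ore} (and its analogue for right modules, obtained by passing to opposite rings) every finitely generated $\SC$-submodule of $\DC_{\SC}$ is isomorphic to a finitely generated $\SC$-submodule of $\DC_{\F}\ast\ZZ$, which has projective dimension at most $1$ by (3).

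I do not expect a genuine obstacle here; the only points requiring a little attention are in statement (2) — keeping track of which side of the crossed product is free over $\F$ so that the correct one-sided flatness is invoked, and noting that inducing a projective $\F$-module up to $\SC$ stays projective — together with the routine bookkeeping of left versus right modules throughout. The conceptual content is carried entirely by the global dimension bound for $\SC$, the identification of $\DC_{\F}\ast\ZZ$ as an induced module, and the reduction of finitely generated $\SC$-submodules of $\DC_{\SC}$ to submodules of $\DC_{\F}\ast\ZZ$.
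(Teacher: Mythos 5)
Your proposal is correct and follows essentially the same route as the paper's own proof: (1) from the global dimension bound of \cref{lemm:global_dimension2} together with \cref{lemm:ext_rotman}, (2) by inducing a length-one projective resolution of $\DC_{\F}$ along the free (hence flat) extension $\F\subseteq\SC$ and invoking \cref{lemm:tensor_vs_crossed_product2}, (3) via the $\Ext$ long exact sequence sandwiching $\Ext^2_{\SC}(M,M')$ between terms vanishing by (2) and (1), and (4) by reducing to (3) through \cref{lemm:submodules_ore}. No gaps.
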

\begin{proof}
    \noindent (1) Since $\SC$ has global dimension at most 2 by \cref{lemm:global_dimension2}, this is a consequence of \cref{lemm:ext_rotman}.
    \smallskip

    \noindent (2) Since $\F$ has global dimension at most 1, the left $\F$-module $\DC_{\F}$ admits a resolution $0\to P_1\to P_0\to \DC_{\F}\to 0$ with $P_1$ and $P_0$ projective left $\F$-modules.
    We now apply the functor $\SC\otimes_{\F}\square$ to this short exact sequence, where we view $\SC$ as an $\SC$-$\F$-bimodule.
    Since $\SC$ is a free right $\F$-module, the resulting sequence is a projective resolution of the left $\SC$-module $\SC\otimes_{\F} \DC_{\F}$, and thus the projective dimension of this module is at most 1.
    This finishes the proof, since the left $\SC$-modules $\SC\otimes_{\F}\DC_{\F}$ and $\DC_{\F}\ast\ZZ$ are isomorphic by \cref{lemm:tensor_vs_crossed_product2}. The corresponding statement for the right $\SC$-module $\DC_{\F}\ast\ZZ$ follows analogously.\smallskip

    \noindent (3) For every left (resp.\ right) $\SC$-module $M'$, the Ext long exact sequence obtained by applying the functor $\Hom_{\SC}(\square, M')$ to the short exact sequence $0\to M\to \DC_{\F}\ast \ZZ\to Q\to 0$ for an appropriate $\SC$-module $Q$ contains the following exact part:
    \[\ldots\to \Ext_{\SC}^2(\DC_{\F}\ast\ZZ, M')\to \Ext_{\SC}^2(M, M')\to \Ext_{\SC}^3(Q, M')\to\ldots\]
    Here, the first term vanishes by (2) and \cref{lemm:ext_rotman}, and the third term vanishes by property (1).
    By exactness, we conclude that the term in the middle also vanishes. Thus, the claim follows from \cref{lemm:ext_rotman}.
    \smallskip

    \noindent (4) This follows directly from (3) and \cref{lemm:submodules_ore}.
\end{proof}

\begin{lemm}
    \label{lemm:tor_vanishing2}
    \leavevmode
    \begin{enumerate}[label={(\arabic*)},ref={\thetheo~(\arabic*)}]
        \item $\Tor_1^{\F}(\DC_{\F}, \DC_{\F})=0$.
        \item $\Tor_2^{\SC}(\DC_{\F} \ast \ZZ, N)=0$ for every left $\SC$-module $N$.
        \item $\Tor_1^{\SC}(\DC_{\F} \ast \ZZ, N)=0$ for every left $\DC_{\F}\ast\ZZ$-module $N$.
        \item $\Tor_1^{\SC}(\DC_{\F} \ast \ZZ, N)=0$ for every left $\SC$-submodule $N\le \DC_{\SC}$.
        \item \label[lemmenum]{lemm:tor_vanishing:dc_sc_sub_1} $\Tor_1^{\SC}(\DC_{\SC}, N)=0$ for every left $\SC$-submodule $N\le \DC_{\SC}$.
        \item $\Tor_1^{\SC}(N, \DC_{\SC})=0$ for every right $\SC$-submodule $N\le \DC_{\SC}$.
        \item \label[lemmenum]{lemm:tor_vanishing:dc_sc_1} $\Tor_1^{\SC}(\DC_{\SC}, \DC_{\SC})=0$.
    \end{enumerate}
\end{lemm}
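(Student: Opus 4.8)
The plan is to prove the seven vanishing statements in the listed order, bootstrapping from the homological facts already recorded for the fir $\F$ and for the crossed product $\SC$. Statement (1) I would read straight off \cref{theo:schofield}: since the fir $\F$ is a Sylvester domain, \cref{prop:equivalence} identifies $\DC_{\F}$ with the universal localization of $\F$ at the set of all full matrices, so the last sentence of \cref{theo:schofield} gives $\Tor_1^{\F}(\DC_{\F},\DC_{\F})=0$. For statement (2) I would observe that by \cref{lemm:ext_vanishing:dc_f_z_2} the module $\DC_{\F}\ast\ZZ$ has projective---hence flat---dimension at most $1$ as a right $\SC$-module, so \cref{lemm:tor_rotman} forces $\Tor_2^{\SC}(\DC_{\F}\ast\ZZ,N)=0$ for every left $\SC$-module $N$.

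Statement (3) is the one place a genuine (if short) argument is needed, and I would route it through Shapiro's lemma. Since $\F\subseteq\SC$ with $\SC$ free, hence flat, as a left $\F$-module, \cref{lemm:shapiro} identifies $\Tor_1^{\SC}(\DC_{\F}\otimes_{\F}\SC,N)$ with $\Tor_1^{\F}(\DC_{\F},{}_{\F}N)$ for any left $\SC$-module $N$, and by \cref{lemm:tensor_vs_crossed_product2}(ii) the right $\SC$-module $\DC_{\F}\otimes_{\F}\SC$ is isomorphic to $\DC_{\F}\ast\ZZ$. If $N$ is moreover a left $\DC_{\F}\ast\ZZ$-module, then it restricts to a left $\DC_{\F}$-module, and $\DC_{\F}$ itself is the restriction to $\F$ of a right $\DC_{\F}$-module, so \cref{theo:schofield}(2) applied to the epic embedding $\F\hookrightarrow\DC_{\F}$ yields $\Tor_1^{\F}(\DC_{\F},{}_{\F}N)\cong\Tor_1^{\DC_{\F}}(\DC_{\F},N)=0$, the last equality because $\DC_{\F}$ is flat over itself; hence $\Tor_1^{\SC}(\DC_{\F}\ast\ZZ,N)=0$.

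The remaining statements should follow formally. For (4), given $N\le\DC_{\SC}$ I would form $0\to N\to\DC_{\SC}\to Q\to 0$ of left $\SC$-modules and apply $(\DC_{\F}\ast\ZZ)\otimes_{\SC}\square$; in the resulting long exact sequence $\Tor_2^{\SC}(\DC_{\F}\ast\ZZ,Q)$ vanishes by (2) and $\Tor_1^{\SC}(\DC_{\F}\ast\ZZ,\DC_{\SC})$ vanishes by (3) (note $\DC_{\SC}=\Ore(\DC_{\F}\ast\ZZ)$ is a left $\DC_{\F}\ast\ZZ$-module), forcing $\Tor_1^{\SC}(\DC_{\F}\ast\ZZ,N)=0$. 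For (5) I would use that $\DC_{\SC}$, being a two-sided Ore localization of the Noetherian domain $\DC_{\F}\ast\ZZ$, is flat on both sides over $\DC_{\F}\ast\ZZ$: choosing a projective resolution $F_\bullet\to N$ of left $\SC$-modules and using associativity of $\otimes$ together with $\DC_{\SC}\otimes_{\DC_{\F}\ast\ZZ}(\DC_{\F}\ast\ZZ)\cong\DC_{\SC}$, one gets $\DC_{\SC}\otimes_{\SC}F_\bullet\cong\DC_{\SC}\otimes_{\DC_{\F}\ast\ZZ}\bigl((\DC_{\F}\ast\ZZ)\otimes_{\SC}F_\bullet\bigr)$, and exactness of $\DC_{\SC}\otimes_{\DC_{\F}\ast\ZZ}\square$ then gives $\Tor_i^{\SC}(\DC_{\SC},N)\cong\DC_{\SC}\otimes_{\DC_{\F}\ast\ZZ}\Tor_i^{\SC}(\DC_{\F}\ast\ZZ,N)$, which vanishes for $i=1$ by (4). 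Statement (6) is the left--right mirror of (5)---by \cref{rema:left_right} and the symmetry built into the definition of a crossed product, the right-handed versions of (2)--(4) hold verbatim and $\DC_{\SC}$ is also flat as a left $\DC_{\F}\ast\ZZ$-module---and (7) is simply the instance $N=\DC_{\SC}$ of (5) (equivalently of (6)), since $\DC_{\SC}$ is tautologically a left $\SC$-submodule of itself.

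I do not anticipate a serious obstacle, since every step is bookkeeping with \cref{lemm:shapiro}, \cref{theo:schofield}, \cref{lemm:tor_rotman}, \cref{lemm:ext_vanishing2} and \cref{lemm:tensor_vs_crossed_product2}. The one point demanding real care is the flat base-change isomorphism $\Tor_i^{\SC}(\DC_{\SC},N)\cong\DC_{\SC}\otimes_{\DC_{\F}\ast\ZZ}\Tor_i^{\SC}(\DC_{\F}\ast\ZZ,N)$ used in (5)--(6): one must keep the two module structures on $\DC_{\SC}$ (over $\SC$ versus over $\DC_{\F}\ast\ZZ$) and the $(\DC_{\F}\ast\ZZ,\SC)$-bimodule structure on $\DC_{\F}\ast\ZZ$ straight, and invoke flatness of the Ore localization on the side matching the tensor factor. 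A minor secondary point is tracking which modules are genuinely $\DC_{\F}\ast\ZZ$- or $\DC_{\F}$-modules, rather than mere $\SC$-modules, when applying (3) and \cref{theo:schofield}.
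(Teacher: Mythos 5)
Your proposal is correct and follows essentially the same route as the paper's proof: (1) via Schofield's theorem for the universal localization $\F\hookrightarrow\DC_{\F}$, (2) from the projective-dimension bound on $\DC_{\F}\ast\ZZ$, (3) via Shapiro's lemma combined with \cref{theo:schofield}~(2), (4) from the long exact sequence for $0\to N\to\DC_{\SC}\to Q\to 0$, (5) by the flat base-change along the exact Ore localization functor $\DC_{\SC}\otimes_{\DC_{\F}\ast\ZZ}\square$, with (6) the right-module mirror and (7) a special case of (5). No discrepancies worth noting.
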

\begin{proof}
    (1) Since $\F$ is a fir, we know that $\DC_{\F}$ is the universal localization of $\F$ with respect to the set of all full matrices, so this follows from \cref{theo:schofield}.
    \smallskip

    \noindent (2) The flat dimension of a module is at most its projective dimension, so this follows from \cref{lemm:ext_vanishing:dc_f_z_2} and \cref{lemm:tor_rotman}.\smallskip

    \noindent (3) Observe that $\DC_{\F}\ast \ZZ$ is isomorphic to $\DC_{\F} \otimes_{\F} \SC$ as a right $\SC$-module by \cref{lemm:tensor_vs_crossed_product2} and that $\SC$ is a free left $\F$-module (in particular flat). Thus, \cref{lemm:shapiro}, together with $(1)$ and \cref{theo:schofield}~(2), tells us that
    \[
    \Tor_1^{\SC}(\DC_{\F} \ast \ZZ, N) \cong \Tor_1^{\F}(\DC_{\F}, N) \cong \Tor_1^{\DC_{\F}}(\DC_{\F}, N) = 0.
    \]

    \noindent (4) We have a short exact sequence $0\to N\to \DC_{\SC}\to Q \to 0$ for some left $\SC$-module $Q$. Applying $\DC_{\F}\ast \ZZ \otimes_{\SC} \square$ to this sequence, we obtain a long exact sequence that contains the following subsequence:
    \[
       \ldots\to\Tor_2^{\SC}(\DC_{\F} \ast \ZZ, Q)\to \Tor_1^{\SC}(\DC_{\F} \ast \ZZ, N)\to\Tor_1^{\SC}(\DC_{\F} \ast \ZZ, \DC_{\SC})\to\ldots
    \]
    Since the first and third term vanish by (2) and (3), respectively, we obtain the result.\smallskip

    \noindent (5)
    Let
    \[
      \ldots\to P_k\to\ldots\to P_0\to N\to 0
    \]
    be a projective resolution of $N$.
    We can compute $\Tor_1^{\SC}(\DC_{\SC}, N)$ as the first homology group of the $\SC$-chain complex
    \[
     \ldots\to \DC_{\SC} \otimes_{\SC} P_k \to\ldots\to \DC_{\SC} \otimes_{\SC} P_0\to 0.
    \]
    Since $\DC_{\SC}\otimes_{\SC} \square \cong \DC_{\SC}\otimes_{\DC_{\F}\ast \ZZ}\DC_{\F}\ast\ZZ\otimes_{\SC} \square$, this complex is $\SC$-isomorphic to:
    \[
        C_*\colon\quad\ldots\to \DC_{\SC} \otimes_{\DC_{\F}\ast\ZZ} \DC_{\F}\ast\ZZ\otimes_{\SC} P_k \to\ldots\to \DC_{\SC} \otimes_{\DC_{\F}\ast\ZZ} \DC_{\F}\ast\ZZ \otimes_{\SC} P_0\to 0.
    \]
    Using that $\DC_{\SC}$ is the Ore localization of $\DC_{\F}\ast\ZZ$, which implies that the functor $\DC_{\SC}\otimes_{\DC_{\F}\ast \ZZ} \square$ is exact, we obtain that $H_*(C_*)\cong\DC_{\SC}\otimes_{\DC_{\F}\ast \ZZ} H_*(D_*)$, where
    \[
        D_*\colon\quad\ldots\to \DC_{\F}\ast\ZZ\otimes_{\SC} P_k \to\ldots\to \DC_{\F}\ast\ZZ \otimes_{\SC} P_0\to 0.
    \]
    But the homology of this complex computes $\Tor_k^{\SC}(\DC_{\F}\ast \ZZ, N)$, and thus
    \[
        \Tor_1^{\SC}(\DC_{\SC}, N)\cong H_1(C_*)\cong \DC_{\SC}\otimes_{\DC_{\F}\ast \ZZ} H_1(D_*) \cong \DC_{\SC}\otimes_{\DC_{\F}\ast \ZZ} \Tor_1^{\SC}(\DC_{\F}\ast \ZZ, N) \stackrel{(4)}{=} 0.
    \]

    \noindent(6)
    Every step in the proof of (5) can be adapted for right modules since $\SC$ is also a free right $\F$-module, and we can apply \cref{lemm:ext_vanishing2}, \cref{lemm:tensor_vs_crossed_product2} and the corresponding version of \cref{lemm:shapiro} for right modules. \smallskip

    \noindent (7)
    This is a special case of (5).
\end{proof}

We obtain from the previous results a weaker version of conditions (2) of \cref{theo:sylvester_criterion} and \cref{theo:pseudosylvester_criterion}:
\begin{prop}
    \label{prop:j_finitely_generated_projective}
    For every finitely generated left or right $\SC$-submodule $M$ of $\DC_{\SC}$ and every exact sequence $0\to J\to \SC^n\to M\to 0$, the $\SC$-module $J$ is finitely generated projective.
\end{prop}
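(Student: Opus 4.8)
The plan is to deduce this in the spirit of the proof of \cref{theo:pseudosylvester_criterion}, using the homological input assembled above. Fix an exact sequence $0\to J\to \SC^{n}\to M\to 0$ with $M$ a finitely generated $\SC$-submodule of $\DC_{\SC}$. Since replacing $M$ by an isomorphic module does not change the isomorphism type of $J$, I would first invoke \cref{lemm:submodules_ore} (with $R=\DC_{\F}\ast\ZZ$, $\mathcal O=\DC_{\SC}$, $S=\SC$) to reduce to the case where $M$ is a finitely generated $\SC$-submodule of $\DC_{\F}\ast\ZZ$ itself; the case $M=0$ is trivial (then $J=\SC^{n}$), so assume $M\neq 0$. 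For projectivity: by \cref{lemm:ext_vanishing:projective_dimension_1} we have $\prd_{\SC}(M)\le 1$, so $M$ has a projective resolution $0\to P_{1}\to P_{0}\to M\to 0$; Schanuel's lemma applied to this and to the given sequence yields $J\oplus P_{0}\cong P_{1}\oplus \SC^{n}$, and the right-hand side is projective, so $J$ is projective. (Equivalently, run the long exact $\Ext$-sequence of $0\to J\to\SC^{n}\to M\to 0$ and use $\Ext^{2}_{\SC}(M,-)=0$.)

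For finite generation I would tensor $0\to J\to \SC^{n}\to M\to 0$ with $\DC_{\F}\ast\ZZ$ over $\SC$. As $M$ is an $\SC$-submodule of $\DC_{\F}\ast\ZZ\subseteq\DC_{\SC}$, \cref{lemm:tor_vanishing2}~(4) gives $\Tor_{1}^{\SC}(\DC_{\F}\ast\ZZ,M)=0$, so
\[
0\to (\DC_{\F}\ast\ZZ)\otimes_{\SC}J\to (\DC_{\F}\ast\ZZ)^{n}\to (\DC_{\F}\ast\ZZ)\otimes_{\SC}M\to 0
\]
is exact. Now $\DC_{\F}\ast\ZZ$ is a principal ideal domain, so $(\DC_{\F}\ast\ZZ)\otimes_{\SC}J$, being a submodule of the finitely generated free module $(\DC_{\F}\ast\ZZ)^{n}$, is free of finite rank $r\le n$; moreover, since $J$ is projective and hence flat, the unit map $J\to(\DC_{\F}\ast\ZZ)\otimes_{\SC}J$ is injective and identifies the latter with the $(\DC_{\F}\ast\ZZ)$-submodule $(\DC_{\F}\ast\ZZ)J$ of $(\DC_{\F}\ast\ZZ)^{n}$, whence one checks $J=\SC^{n}\cap(\DC_{\F}\ast\ZZ)J$. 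It remains to conclude that $J$ is finitely generated over $\SC$: choosing finitely many $j_{1},\dots,j_{N}\in J$ whose images generate $(\DC_{\F}\ast\ZZ)\otimes_{\SC}J$ and setting $J_{0}=\SC j_{1}+\dots+\SC j_{N}\subseteq J$, one gets $(\DC_{\F}\ast\ZZ)\otimes_{\SC}(J/J_{0})=0$, and I would combine this with the identification $\DC_{\F}\ast\ZZ\cong\SC\otimes_{\F}\DC_{\F}$ of right $\SC$-modules from \cref{lemm:tensor_vs_crossed_product2}, the fact that $\SC$ is free over $\F$ on both sides, and the coherence of $\SC$ (which holds since $\F$ is a semifir, hence coherent, and coherence passes to the skew Laurent ring $\SC=\F[t^{\pm1};\tau]$) to force $J_{0}=J$. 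The argument for a finitely generated \emph{right} $\SC$-submodule $M$ is the mirror image, using the right-hand versions of \cref{lemm:tor_vanishing2,lemm:tensor_vs_crossed_product2}.

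The step I expect to be the main obstacle is precisely this last descent. Because $\DC_{\F}\ast\ZZ$ is obtained from $\SC$ by a \emph{universal} rather than an Ore localization, one cannot simply clear denominators to transport finite generation from $\DC_{\F}\ast\ZZ$ back to $\SC$; one must genuinely exploit that $J$ is cut out inside $\SC^{n}$ by a finitely generated free $(\DC_{\F}\ast\ZZ)$-module, together with the noetherian behaviour of $\DC_{\F}\ast\ZZ$ and the coherence of $\SC$ (equivalently, one proves that $M$ is finitely \emph{presented} over $\SC$, which by Schanuel's lemma is the same as finite generation of $J$). Everything else is a routine assembly of \cref{lemm:submodules_ore,lemm:ext_vanishing2,lemm:tor_vanishing2} with the module theory of the principal ideal domain $\DC_{\F}\ast\ZZ$. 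Once \cref{prop:j_finitely_generated_projective} is available, \cref{theo:main_fir_crossed_ZZ} follows by feeding it into \cref{theo:pseudosylvester_criterion} (respectively \cref{theo:sylvester_criterion}) after upgrading ``projective'' to ``stably free'' (respectively ``free'') under the hypothesis that every finitely generated projective $\SC$-module is stably free (respectively that $\SC$ is projective-free).
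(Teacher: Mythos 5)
Your projectivity argument is exactly the paper's: $\prd_{\SC}(M)\le 1$ by \cref{lemm:ext_vanishing:projective_dimension_1} together with Schanuel's lemma. The gap is in the finite-generation half, and it sits exactly where you predicted. From $(\DC_{\F}\ast\ZZ)\otimes_{\SC}(J/J_{0})=0$ you cannot conclude $J_{0}=J$: the right $\SC$-module $\DC_{\F}\ast\ZZ\cong\DC_{\F}\otimes_{\F}\SC$ is flat but not \emph{faithfully} flat over $\SC$ --- for instance $(\DC_{\F}\ast\ZZ)\otimes_{\SC}(\SC/\SC a)=0$ for any non-unit $0\ne a\in\F$, since $a$ becomes invertible in $\DC_{\F}$ --- so vanishing of the tensor product says nothing about $J/J_{0}$ by itself. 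Coherence of $\SC$ does not close this either: coherence controls finitely generated submodules of finitely presented modules, and the finite generation of $J$ is precisely the unknown quantity. As written, the descent step is therefore unproved.

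The missing ingredient is that one must exploit the projectivity of $J$ a second time, not merely its flatness. The paper does this by tensoring the defining sequence with the division ring $\DC_{\SC}$ (exactness is preserved by \cref{lemm:tor_vanishing:dc_sc_sub_1}), observing that $\DC_{\SC}\otimes_{\SC}J$ embeds into $(\DC_{\SC})^{n}$ and is hence finitely generated over the division ring $\DC_{\SC}$, and then invoking \cite{LLS2003}*{Lemma~4}: if $R\subseteq S$ are rings and $P$ is a projective $R$-module with $S\otimes_{R}P$ finitely generated over $S$, then $P$ is finitely generated over $R$ (a dual basis argument). Your setup, with $S=\DC_{\F}\ast\ZZ$ and $(\DC_{\F}\ast\ZZ)\otimes_{\SC}J$ free of finite rank, feeds into the very same lemma, so your route is repaired by replacing the coherence argument with this citation (or its short dual-basis proof); the detour through the principal ideal domain structure of $\DC_{\F}\ast\ZZ$, the intersection $J=\SC^{n}\cap(\DC_{\F}\ast\ZZ)J$, and the reduction via \cref{lemm:submodules_ore} then all become unnecessary.
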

\begin{proof}
    Since $M$ has projective dimension at most 1 by \cref{lemm:ext_vanishing:projective_dimension_1} and $\SC^n$ is projective, it follows from Schanuel's lemma that $J$ is projective.

    If $M$ is a left $\SC$-module and we apply the functor $\DC_{\SC}\otimes_{\SC} \square$ to the short exact sequence defining $J$, the sequence remains exact by \cref{lemm:tor_vanishing:dc_sc_sub_1}.
    In particular, $\DC_{\SC}\otimes_{\SC} J$ is isomorphic to a $\DC_{\SC}$-submodule of the finitely generated $\DC_{\SC}$-module $(\DC_{\SC})^n$.
    But $\DC_{\SC}$ is a division ring, thus $\DC_{\SC}\otimes_{\SC} J$ is itself finitely generated.
    Since $J$ is projective, \cite{LLS2003}*{Lemma~4} applies and we obtain that $J$ is finitely generated.
\end{proof}

We finally have all the necessary ingredients for the proof of \cref{theo:main_fir_crossed_ZZ}.

\begin{proof}[Proof of \cref{theo:main_fir_crossed_ZZ}]
    By \cref{lemm:tor_vanishing:dc_sc_1}, the conditions (1) of \cref{theo:pseudosylvester_criterion} and \cref{theo:sylvester_criterion} are satisfied for $\SC \hookrightarrow \DC_{\SC}$, while we obtain from \cref{prop:j_finitely_generated_projective} that the module $J$ appearing in the conditions (2) is finitely generated and projective. Therefore, if every finitely generated projective $\SC$-module is stably free (resp.\ free), we deduce that $\SC$ is a pseudo-Sylvester domain (resp.\ Sylvester domain). Conversely, over a pseudo-Sylvester domain every finitely generated projective module is stably free (cf.\ \cite{Cohn2006}*{Proposition 5.6.2}), while Sylvester domains are always projective-free (cf.\ \cite{Cohn2006}*{Proposition 5.5.7}).

    In any of the previous cases, we conclude from the criteria that $\DC_{\SC} = \Ore(\DC_{\F}\ast \ZZ)$ is the universal division $\F\ast \ZZ$-ring of fractions, and hence isomorphic to the universal localization of $\F\ast \ZZ$ with respect to the set of all stably full (resp.\ full) matrices.
\end{proof}

As we mentioned in \cref{subsect:Sylvester}, one could also use the results of Cohn and Schofield in \cite{CS1982} to deduce \cref{theo:main_fir_crossed_ZZ} b) from a).

\section{Proof of \texorpdfstring{\cref{theo:main_pseudosylvester}}{Theorem B} and examples}\label{sect:applications}

The aim of this section is to prove \cref{theo:main_pseudosylvester}. Thus, throughout this section the main object of study will be a crossed product $E \ast G$, where $E$ is a division ring and $G$ denotes a group that fits into a short exact sequence
\[1\to F\to G\to \ZZ \to 1,\] with $F$ a non-necessarily finitely generated free group. Since $\ZZ$ is a free group, any such extension splits and $G$ arises as a semi-direct product $F\rtimes \ZZ$.

As in \cref{subsect:crossed_products}, in this case $E\ast G$ can be expressed as an iterated crossed product $(E\ast F)\ast \ZZ$, and since $E\ast F$ is a fir, we are in the situation of \cref{theo:main_fir_crossed_ZZ} with $\F=E\ast F$ and $\SC=E\ast G$. In this section, we use $\DC_{E\ast F}$ to denote the universal division $E\ast F$-ring of fractions and set $\DC_{E\ast G} = \Ore(\DC_{E\ast F}\ast \ZZ)$.

In \cref{subsect:Farrell-Jones} we use the Farrell--Jones conjecture in algebraic $K$-theory to show that $E\ast G$ is always a pseudo-Sylvester domain. Whether this ring is even a Sylvester domain is a much more delicate question and not much can be said in general. In \cref{subsect:examples} we give examples of group rings for which this question has a known answer.

\subsection{The Farrell--Jones conjecture and stably freeness}\label{subsect:Farrell-Jones}

In this subsection we use recent results on the Farrell--Jones conjecture to prove that the finitely generated projective $E\ast G$-module $J$ that appears in condition (2) of \cref{theo:pseudosylvester_criterion} is actually stably free, which will conclude the first part of the proof of \cref{theo:main_pseudosylvester}.
The following piece of the algebraic $K$-theory of a ring is needed to phrase the results:
\begin{defi}
    Let $R$ be a ring.
    Then we denote by $K_0(R)$ the abelian group generated by the isomorphism classes $[P]$ of finitely generated projective $R$-modules together with the relations
    \[ [P \oplus Q]-[P]-[Q]=0\]
    for all finitely generated projective $R$-modules $P$ and $Q$.
\end{defi}

Every element of $K_0(R)$ is of the form $[P]-[P']$ for finitely generated projective $R$-modules $P$ and $P'$.
The identity $[P]=[P']\in K_0(R)$ holds for two finitely generated projective $R$-modules $P$ and $P'$ if and only if there is a finitely generated projective $R$-module $Q$ such that $P\oplus Q\cong P'\oplus Q$, where $Q$ can even be taken to be free.

If $f\colon R\to S$ is a ring homomorphism and $P$ is a finitely generated projective $R$-module, then $S\otimes_R P$ is a finitely generated projective $S$-module.
In this way, $K_0(\square)$ becomes a functor from rings to abelian groups.

The conditions of \cref{rema:left_right} are satisfied for $K_0(\square)$ and thus it does not depend on whether we use left or right modules in its definition.

The \emph{Farrell--Jones conjecture} makes far-reaching claims about the $K$-theory and $L$-theory of group rings or, more generally, additive categories with group actions, in particular for torsion-free groups.
It is known for many classes of groups and satisfies a number of useful inheritance properties.
For a full statement of the Farrell--Jones conjecture and an overview of the groups for which it is known, we refer the reader to the surveys \cite{BLR2008} and \cite{RV2018}, and also to the book project \cite{Luck2019}.

We will need the following consequence of the Farrell--Jones conjecture which is certainly well-known, but has not been made explicit in the literature.
\begin{prop}
    \label{theo:fjca_k0}
    Let $E$ be a division ring, $\Gamma$ a torsion-free group and $E\ast \Gamma$ a crossed product.
    If the $K$-theoretic Farrell--Jones conjecture with coefficients in an additive category holds for $\Gamma$, then the embedding $E\hookrightarrow E\ast \Gamma$ induces an isomorphism
    \[K_0(E)\xrightarrow{\cong} K_0(E\ast \Gamma).\]
    In particular, since $K_0(E)=\{n[E]\mid n\in\ZZ\}$, every finitely generated projective $E\ast \Gamma$-module is stably free.
\end{prop}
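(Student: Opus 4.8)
The plan is to deduce the statement from the Farrell--Jones conjecture via the assembly map in $K$-theory, following the standard recipe for computing $K_0$ of torsion-free group rings. Since $\Gamma$ is torsion-free, the family of virtually cyclic subgroups appearing in the Farrell--Jones conjecture can be reduced: the relevant relative assembly maps from the trivial family to the family of finite subgroups, and from there to the family of virtually cyclic subgroups, are isomorphisms on $K_0$ (the first because $\Gamma$ is torsion-free, so the only finite subgroup is trivial; the second by the Bass--Heller--Swan-type computation, where the Nil-terms and the twisted Nil-terms contribute only in degrees $\ge 1$ and do not affect $K_0$ once one restricts attention to how $K_0$ assembles). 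Concretely, the $K$-theoretic Farrell--Jones conjecture with coefficients in an additive category, applied to the additive category underlying the crossed product $E\ast\Gamma$, identifies $K_0(E\ast\Gamma)$ with the Bredon-type homology group $H_0^{\Gamma}(E\Gamma;\mathbf{K}_E)$, which for a torsion-free group collapses to $H_0^{\Gamma}(\pt;\mathbf{K}_E)=K_0(E)$.

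First I would set up the assembly map and record that, because $\Gamma$ is torsion-free, one may take the classifying space for the trivial family, i.e.\ $E\Gamma=\widetilde{B\Gamma}$, so the source of the assembly map is $H_*^{\Gamma}(E\Gamma;\mathbf{K}_{E\ast\Gamma}^{\text{add}})$. Next I would invoke the Atiyah--Hirzebruch spectral sequence for this equivariant homology theory, whose $E^2$-page in the relevant corner reads $E^2_{0,0}=H_0(B\Gamma;K_0(E))=K_0(E)$ and $E^2_{p,q}=H_p(B\Gamma;K_q(E))$; since the differentials into and out of the $(0,0)$-spot land in groups that vanish (there is no $E^2_{-p,*}$ and $d_2\colon E^2_{2,-1}\to E^2_{0,0}$ starts from $H_2(B\Gamma;K_{-1}(E))$, which need not vanish in general — so I would instead argue more carefully, or more cleanly just use that the edge homomorphism $K_0(E)=H_0^{\Gamma}(\pt;\mathbf{K}_E)\to H_0^{\Gamma}(E\Gamma;\mathbf{K}_E)$ induced by $\pt\to E\Gamma$ is split injective with the splitting induced by $E\Gamma\to\pt$, and surjective because the fiber transport can only add classes detected in positive homological degree). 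The composite $K_0(E)\to K_0(E\ast\Gamma)\xleftarrow{\ \cong\ }H_0^{\Gamma}(E\Gamma;\mathbf{K}_E)\leftarrow K_0(E)$ is the identity on the nose, since the structure map $E\hookrightarrow E\ast\Gamma$ is exactly the one inducing the edge map; combined with the split surjectivity this gives that $K_0(E)\to K_0(E\ast\Gamma)$ is an isomorphism.

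For the final sentence: $E$ is a division ring, so every finitely generated projective $E$-module is free of a well-defined rank, giving $K_0(E)\cong\ZZ$ generated by $[E]$, i.e.\ $K_0(E)=\{n[E]\mid n\in\ZZ\}$. Under the isomorphism $K_0(E)\xrightarrow{\cong}K_0(E\ast\Gamma)$ the class $[E]$ goes to $[E\ast\Gamma]$ (the free module of rank $1$), so $K_0(E\ast\Gamma)=\{n[E\ast\Gamma]\mid n\in\ZZ\}$. Now let $P$ be any finitely generated projective $E\ast\Gamma$-module; then $[P]=n[E\ast\Gamma]$ in $K_0(E\ast\Gamma)$ for some $n$, which by the description of equality in $K_0$ recalled above means there is a finitely generated free module $Q=(E\ast\Gamma)^m$ with $P\oplus Q\cong (E\ast\Gamma)^{n}\oplus Q=(E\ast\Gamma)^{n+m}$ (absorbing the sign: if $n<0$ one has instead $P\oplus(E\ast\Gamma)^{m-n}\cong(E\ast\Gamma)^m$, still witnessing stable freeness). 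Hence $P$ is stably free.

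The main obstacle is the bookkeeping in the second step: one must make sure that passing from the virtually cyclic family in the Farrell--Jones conjecture down to the trivial family genuinely leaves $K_0$ unchanged. The cleanest route is not to touch the spectral sequence's potentially nonzero $H_2(B\Gamma;K_{-1}(E))$ at all, but to exploit the retraction $\pt\to E\Gamma\to\pt$ to split off $K_0(E)$ as a direct summand of $K_0(E\ast\Gamma)$, and then to see that the complementary summand is built entirely from $H_p(B\Gamma;K_q(E))$ with $p+q=0$, $p\ge 1$, together with the relative terms from the finite-to-virtually-cyclic comparison; all of these vanish because $K_{q}(E)$ for $q<0$ plays no role in the $0$-th edge and the twisted Nil-groups $\mathrm{NK}_0$ of the division-ring coefficient system appear only through $K_1$ and higher. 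Making this precise — ideally by citing the relevant reduction results on families and the Bass--Heller--Swan decomposition in the Farrell--Jones framework — is where the real work lies; everything else is formal.
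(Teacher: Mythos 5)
Your overall route --- reduce from the family $\VCyc$ to the trivial family using torsion-freeness together with a Bass--Heller--Swan argument for the infinite cyclic subgroups, then read off $K_0$ from the Atiyah--Hirzebruch spectral sequence for $\HC^\Gamma_\ast(\EC_{\Triv}(\Gamma);\KB_{\AC_{E\ast\Gamma}})$ --- is exactly the paper's. The gap lies in how you dispose of the interfering terms. You correctly identify the obstacles, namely the terms $H_p(B\Gamma;K_{q}(E))$ with $p+q=0$, $p\ge 1$ (in particular $H_2(B\Gamma;K_{-1}(E))$, which you flag as ``need not vanish in general'') and the twisted Nil-terms in the reduction step, but the reasons you offer for their vanishing are either circular (``$K_q(E)$ for $q<0$ plays no role in the $0$-th edge'') or incorrect ($NK_0$ of the coefficient ring contributes to $K_0$ of the Laurent extension, not to $K_1$ and higher). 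The one fact that makes everything collapse, and which you never invoke, is that $E$ is a division ring and hence regular: its negative $K$-groups and its Nil-groups vanish, so the non-connective spectrum $\KBB(E)$ is connective and the Fundamental Theorem for skew Laurent extensions gives $K_0(E[t^{\pm 1};\tau])\cong K_0(E)$ with no correction terms. This is precisely what the paper uses, citing connectivity of $\KBB(E)$ for regular $E$ and the vanishing of Nil-terms for additive categories with regular coefficients.

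Your proposed workaround --- splitting $K_0(E)$ off via a ``retraction $\pt\to\EC_{\Triv}(\Gamma)\to\pt$'' --- does not work: $\EC_{\Triv}(\Gamma)$ is a free $\Gamma$-space, so for nontrivial $\Gamma$ there is no equivariant map $\pt\to\EC_{\Triv}(\Gamma)$, and the assembly map only goes one way; the surjectivity claim via ``fiber transport'' is not an argument. Once regularity of $E$ is inserted, none of this is needed: connectivity of $\KBB(E)$ kills every $E^2$-term $H_p(B\Gamma;K_q(E))$ with $q<0$, so the total degree $0$ part of the spectral sequence is exactly $H_0(B\Gamma;K_0(E))\cong K_0(E)$, the local coefficient system being constant because the $\Gamma$-action preserves isomorphism types. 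Your final deduction of stable freeness from $K_0(E\ast\Gamma)=\{n[E\ast\Gamma]\mid n\in\ZZ\}$ is correct and matches the paper.
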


\begin{proof}
    For a given crossed product $E\ast \Gamma$, we will denote the additive category defined in \cite{BR2007}*{Corollary~6.17} by $\AC_{E\ast \Gamma}$.
    We will freely use the terminology and notation of that paper.
    Furthermore, we will denote the family of virtually cyclic subgroups of a given group by $\VCyc$ and the family consisting just of the trivial subgroup by $\Triv$.
    The $K$-theoretic Farrell--Jones conjecture for the group $\Gamma$ with coefficients in the additive category $\AC_{E\ast \Gamma}$ arises as an instance of the more general meta-isomorphism conjecture \cite{Luck2019}*{Conjecture~13.2} for the $\Gamma$-homology theory $\HC^\Gamma_*(\square; \KB_{\AC_{E\ast \Gamma}})$ introduced in \cite{BR2007} and the family $\FC=\VCyc$.
    It states that the assembly map
    \[\HC^\Gamma_*(\EC_{\VCyc}(\Gamma); \KB_{\AC_{E\ast \Gamma}})\to \HC^\Gamma_*(\pt; \KB_{\AC_{E\ast \Gamma}})\]
    is an isomorphism, where the right-hand side is isomorphic to $K_*(E\ast \Gamma)$ by \cite{BR2007}*{Corollary~6.17}.

    In order to arrive at the desired conclusion, we need to reduce the family from $\VCyc$ to $\Triv$.
    Since $\Gamma$ is assumed to be torsion-free and hence all its virtually cyclic subgroups are infinite cyclic, we can arrange for this via the transitivity principle of~\cite{Luck2019}*{Theorem~13.13 (i)} if the meta-isomorphism conjecture holds for the $\ZZ$-homology theory $\HC^\ZZ_*(\square; \KB_{\AC_{E\ast \ZZ}})$ and the family $\FC=\Triv$.
    A model for the classifying space $\EC_{\Triv}(\ZZ)$ is given by $\RR$ and we may again assume that the crossed product $E\ast\ZZ$ is a skew Laurent polynomial ring $E[t^{\pm 1};\tau]$.
    In this situation, since $E$ is regular, i.e. it is Noetherian and every finitely generated $E$-module possesses a finite resolution by finitely generated projective modules, the assembly map coincides with the map provided by the analogue of the Fundamental Theorem of algebraic $K$-theory for skew Laurent polynomial rings, which is an isomorphism (cf.~\cite{BL2020}*{Theorems~6.8~\&~9.1} or~\cite{Grayson1985} for a more classical treatment).

    Since the $K$-theoretic Farrell--Jones conjecture with coefficients in an additive category is assumed to hold for $\Gamma$, we now obtain from the transitivity principle that the assembly map
    \[\HC^\Gamma_*(\EC_{\Triv}(\Gamma); \KB_{\AC_{E\ast \Gamma}})\to \HC^\Gamma_*(\pt; \KB_{\AC_{E\ast \Gamma}}) \cong K_*(E\ast \Gamma)\]
    is an isomorphism.
    The space $\EC_{\Triv}$ is a free $\Gamma$-space and the value at the coset $\nicefrac{\Gamma}{\{1\}}$ of the $\Or(\Gamma)$-spectrum $\KB_{\AC_{E\ast \Gamma}}$ is $\KBB(\AC_{E\ast \Gamma}\ast \nicefrac{\Gamma}{\{1\}})$.
    We can thus simplify the left-hand side of the assembly map as follows:
    \[\HC^\Gamma_*(\EC_{\Triv}(\Gamma); \KB_{\AC_{E\ast \Gamma}})\cong H_*(B\Gamma; \KBB(\AC_{E\ast \Gamma}\ast \nicefrac{\Gamma}{\{1\}})).\]
    Here, $B\Gamma$ denotes the standard classifying space of the group $\Gamma$ and homology is taken with local coefficients.
    Using \cite{BR2007}*{Corollary~6.17} once more, we observe that $\KBB(\AC_{E\ast \Gamma}\ast \nicefrac{\Gamma}{\{1\}})$ is weakly equivalent to $\KBB(E)$, which is connective by \cite{Luck2019}*{Theorem~3.6} since $E$ is a regular ring.
    In particular, the Atiyah--Hirzebruch spectral sequence provides the following natural isomorphism:
    \[H_0(B\Gamma; \KBB(\AC_{E\ast \Gamma}\ast \nicefrac{\Gamma}{\{1\}}))\cong H_0(B\Gamma; \pi_0(\KBB(\AC_{E\ast \Gamma}\ast \nicefrac{\Gamma}{\{1\}}))),\]
    where homology is again taken with local coefficients.
    Since $\pi_0(\KBB(\AC_{E\ast \Gamma}\ast \nicefrac{\Gamma}{\{1\}}))\cong K_0(\AC_{E\ast \Gamma}\ast \nicefrac{\Gamma}{\{1\}})$ and the $\Gamma$-action on $\AC_{E\ast \Gamma}\ast \nicefrac{\Gamma}{\{1\}}$, which is induced from that on the $\Gamma$-space $\nicefrac{\Gamma}{\{1\}}$, preserves isomorphism types, the local coefficients are in fact constant.
    We conclude that
    \[H_0(B\Gamma; \KBB(\AC_{E\ast \Gamma}\ast \nicefrac{\Gamma}{\{1\}}))\cong H_0(B\Gamma; K_0(E)),\]
    and thus the assembly map in degree 0 simplifies to
    \[K_0(E)\cong H_0(B\Gamma; K_0(E))\xrightarrow{\cong} K_0(E\ast \Gamma).\]
    This proves the first statement.

    The second statement is a direct consequence since every finitely generated projective $E\ast \Gamma$-module $P$ represents an element $n[E\ast \Gamma]$ in $K_0(E\ast \Gamma)$ for $n\geq 0$ and thus there exists a finitely generated free $E\ast \Gamma$-module $Q$ such that $P\oplus Q\cong (E\ast \Gamma)^n\oplus Q$, which is free.
\end{proof}

The following is the $K$-theoretic part of~\cite{BFW2019}*{Theorem~1.1} in the case of a finitely generated free group $F$ and~\cite{BKW2019}*{Theorem~A} in the general case:
\begin{theo}
    \label{theo:fjc_free_by_cyclic}
    The $K$-theoretic Farrell--Jones conjecture with coefficients in an additive category holds for every group that arises as an extension
    \[1\to F\to G\to \ZZ\to 1\]
    with $F$ a (not necessarily finitely generated) free group.
\end{theo}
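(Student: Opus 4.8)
Since \cref{theo:fjc_free_by_cyclic} is attributed verbatim to two recent papers, the ``proof'' I would give is essentially a reduction of the hypothesis to the group classes covered by those theorems, together with a bookkeeping of which part of their conclusions is actually needed. \emph{Finitely generated case.} If $F=F_n$ is finitely generated, then (the extension splitting because $\ZZ$ is free) $G\cong F\rtimes\ZZ$ is by definition a \emph{free-by-cyclic} group, a special case of the hyperbolic-by-cyclic groups treated in \cite{BFW2019}; its \cite{BFW2019}*{Theorem~1.1} asserts precisely the $K$-theoretic Farrell--Jones conjecture with coefficients in an additive category — in fact the strengthening to (finite) wreath products, which is what makes the conjecture inherit to subgroups, finite-index overgroups, extensions with controlled quotients and directed colimits. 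I would simply cite this. Internally, that proof analyses the automorphism of $F$ via relative train track representatives, treats the atoroidal/fully irreducible situation through Bartels--Lück's theorem for hyperbolic groups, handles the general situation by exhibiting $G$ as hyperbolic relative to the mapping tori of its maximal polynomially-growing pieces and invoking FJC for relatively hyperbolic groups, disposes of those polynomially-growing peripheral subgroups by a separate geometric argument, and glues everything via the inheritance properties of FJC.

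\emph{General case.} For an arbitrary, possibly infinitely generated, free group $F$, the key point is purely group-theoretic: the series $1\trianglelefteq F\trianglelefteq G$ has $F$ free and normal in $G$, and quotient $G/F\cong\ZZ$ free, so $G$ is \emph{normally poly-free} (of length two), with no finiteness assumption on $F$. Br\"uck--Kielak--Wu \cite{BKW2019}*{Theorem~A} establish the $K$-theoretic Farrell--Jones conjecture with additive coefficients for every normally poly-free group, which yields the claim in full generality. Their argument builds on the finitely generated free-by-cyclic case above and propagates it along the normal series using the FJC inheritance machinery, the infinitely generated kernel being absorbed by a limiting argument; again I would just cite it.

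\emph{Where the difficulty lies.} There is no new obstacle to overcome within the present paper: the entire difficulty is packaged inside \cite{BFW2019} and \cite{BKW2019}, where establishing FJC rests on substantial input from controlled topology (Dress--Farrell--Hsiang-type methods, transfers, flow spaces) and from the geometry of outer automorphisms of free groups. What remains on our side is only (i) to observe that the groups in the hypothesis are exactly the normally poly-free groups of length two with infinite cyclic top quotient, hence covered by these theorems, and (ii) to record that for the application in \cref{theo:fjca_k0} one needs merely the $K$-theoretic half of those results, in the additive-coefficient (wreath-product) formulation that supports the inheritance properties exploited there. Accordingly, I would present the statement with its two citations and the one-line normally-poly-free observation, and no further argument.
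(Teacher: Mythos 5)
Your proposal is correct and matches the paper exactly: the paper offers no proof of its own but simply attributes the finitely generated case to \cite{BFW2019}*{Theorem~1.1} and the general case to \cite{BKW2019}*{Theorem~A} (via the observation that such $G$ is normally poly-free), which is precisely your reduction. The additional commentary you give on the internal structure of those proofs is accurate but not needed for the citation-level argument the paper uses.
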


The previous result provides the final step in the proof of \cref{theo:main_pseudosylvester}.

\begin{proof}[Proof of \cref{theo:main_pseudosylvester}]
    Since $G$ satisfies the $K$-theoretic Farrell--Jones conjecture with coefficients in additive categories by \cref{theo:fjc_free_by_cyclic}, we obtain from \cref{theo:fjca_k0} that every finitely generated projective $E\ast G$-module is stably free.
    Therefore, the statement follows from \cref{theo:main_fir_crossed_ZZ}.
\end{proof}

\subsection{Examples and non-examples of Sylvester domains} \label{subsect:examples}

The main examples of groups of the form $1\to F\to G\to \ZZ\to 1$ are the free-by-\{infinite cyclic\} groups (terminology usually reserved in the literature for the case where $F$ is finitely generated) and fundamental groups of closed surfaces with genus $g\ge 1$ other than the projective plane, which has to be excluded since its fundamental group has torsion. In the latter family, we have to distinguish the fundamental groups $S_g$ of orientable closed surfaces of genus $g\ge 1$, which admit the presentations
\[S_g = \langle a_1, b_1,\ldots, a_g,b_g\mid [a_1,b_1]\cdot\ldots\cdot [a_g,b_g]\rangle,\]
and the fundamental groups of non-orientable closed surfaces of genus $g\ge 2$,
which admit the presentations
\[\mathfrak S_g = \langle a_1,\ldots, a_g \mid a_1^2\cdot\ldots\cdot a_g^2\rangle.\]
That these groups contain a normal free subgroup $F$ such that $G/F$ is infinite cyclic is a consequence of the fact that their infinite index subgroups are free (cf.\ \cite{HKS1972}) and that their abelianizations contain an infinite cyclic summand.

Within these families, there are some cases of group rings for which it is known whether they admit stably free cancellation. In the following examples, $K$ is any field of characteristic 0.

\begin{itemize}
    \item Examples of group rings with stably free cancellation are $K[\ZZ^2] = K[S_1]$ (cf.\ \cite{Swan1978}) and $K[F_2\times\ZZ]$ (cf.\ \cite{Bass1968}*{IV.6.4}, using that $K[\ZZ]$ is a PID and thus a projective-free Dedekind domain).
    \item Examples of group rings which do admit non-free stably free modules are given by $K[\ZZ\rtimes\ZZ]=K[\mathfrak S_2]$ (cf.\ \cite{Stafford1985}*{Theorem~2.12}) and $\QQ[\langle x,y\mid x^3=y^2\rangle]=\QQ[F_2\rtimes\ZZ]$ (cf.\ \cite{Lewin1982} and note that the non-free projective ideal in the main theorem is actually stably free). Here, the latter example is the rational group ring of the fundamental group of the complement of the trefoil knot, which fibers over the circle and hence admits a free-by-\{infinite cyclic\} fundamental group (cf.\ \cite{BZH2013}*{Corollary~4.12}).
    Both group rings serve as examples of pseudo-Sylvester domains that are not Sylvester domains.
\end{itemize}

To the best of the authors' knowledge, it is an open question whether $\CC[S_g]$ for $g\ge 2$ and $\CC[\mathfrak S_g]$ for $g\ge 3$ have stably free cancellation.

\section{Identifying \texorpdfstring{$\DC_{E\ast G}$}{DE*G} via Hughes-freeness} \label{subsect:locally_indicable}

We will now identify the universal division $E\ast G$-ring of fractions $\DC_{E\ast G}$ of \cref{theo:main_pseudosylvester} with division rings constructed by Gräter and Linnell. For this purpose, we introduce the notion of Hughes-free division $E\ast \Gamma$-ring of fractions for a locally indicable group $\Gamma$ and make use of the fact that, whenever it exists, it is unique up to $E\ast \Gamma$-isomorphism (\cite{Hughes1970}).

Recall that the \emph{division closure} of a ring $R$ in an overring $T$ is defined to be the smallest subring $S$ of $T$ containing $R$ that is \emph{division closed} in $T$, i.e., such that $x\in S$ is invertible in $S$ if it is invertible in $T$.
Furthermore, a group $\Gamma$ is \emph{locally indicable} if it is either trivial or every non-trivial finitely generated subgroup $H$ of $\Gamma$ admits a surjection onto $\ZZ$.
The groups considered in \cref{theo:main_pseudosylvester} are all locally indicable.

Observe that if $H$ is a non-trivial finitely generated subgroup of a locally indicable group $\Gamma$, $N$ is the kernel of a surjection $H\to \ZZ$ and $t\in H$ maps to a generator of $\ZZ$, then the powers of $\tilde t$ in $E\ast \Gamma$ are $E\ast N$-independent. A division $E\ast\Gamma$-ring of fractions is called Hughes-free if it ``preserves'' this property.

\begin{defi}
   A division $E\ast \Gamma$-ring of fractions $E\ast \Gamma\hookrightarrow \DC$ is \emph{Hughes-free} if, for every non-trivial finitely generated subgroup $H$ of $\Gamma$, every normal subgroup $N\trianglelefteq H$ such that $H/N$ is infinite cyclic and every $t\in H$ projecting to a generator of the quotient, the powers of $\tilde t$ are $\DC_N$-linearly independent, where $\DC_N$ denotes the division closure of $E\ast N$ in $\DC$.
\end{defi}

As proved by Hughes in \cite{Hughes1970}, if there exists a Hughes-free division $E\ast \Gamma$-ring of fractions, then it is unique up to $E\ast \Gamma$-isomorphism (see also \cite{Sanchez2008}*{Hughes Theorem I} for a detailed proof of this result).
Whereas the existence of a Hughes-free division ring for a general locally indicable group has only been settled for group rings $\Gamma$ when $K$ has characteristic zero (\cite{JL2020}), it is known for crossed products with the groups considered in \cref{theo:main_pseudosylvester}:

\begin{theo}\label{theo:hughes_free_exists}
   Let $G$ be a group obtained as an extension
   \[1 \rightarrow F \rightarrow G \rightarrow \ZZ \rightarrow 1\]
   where $F$ is a free group. Then, for every division ring $E$ and every crossed product $E\ast G$, there exists a Hughes-free division $E\ast G$-ring of fractions $\DC$. Moreover, if there exists a universal division $E\ast G$-ring of fractions, then it is isomorphic to $\DC$.
\end{theo}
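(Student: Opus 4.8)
The plan is to establish existence of the Hughes-free division $E\ast G$-ring of fractions by induction that mirrors the structure $E\ast G = (E\ast F)\ast\ZZ$, and then to identify it with the candidate ring $\DC_{E\ast G} = \Ore(\DC_{E\ast F}\ast\ZZ)$ already constructed in \cref{theo:main_pseudosylvester}, using the uniqueness part of Hughes' theorem \cite{Hughes1970} to conclude. First I would recall that $E\ast F$ is a fir, hence a Sylvester domain, so by \cref{theo:main_fir_crossed_ZZ} (applied with $\F = E\ast F$) the ring $\DC_{E\ast F}$ is the universal division $E\ast F$-ring of fractions and $\DC_{E\ast G} = \Ore(\DC_{E\ast F}\ast\ZZ)$ is a division $E\ast G$-ring of fractions. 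The bulk of the argument is then to verify that this ring $\DC_{E\ast G}$ is \emph{Hughes-free} as a division $E\ast G$-ring of fractions.

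To check Hughes-freeness, I would take a non-trivial finitely generated subgroup $H\le G$, a normal subgroup $N\trianglelefteq H$ with $H/N$ infinite cyclic, and $t\in H$ mapping to a generator; I must show that the powers of $\tilde t$ are $\DC_N$-linearly independent, where $\DC_N$ is the division closure of $E\ast N$ in $\DC_{E\ast G}$. The first key step is to identify $\DC_N$ intrinsically: since $N$ is a subgroup of $G$ which (being a subgroup of a free-by-cyclic group) is itself free-by-cyclic or free, and since any division $E\ast G$-ring of fractions restricts to a division $E\ast N$-ring of fractions on the division closure, one shows $\DC_N$ is $E\ast N$-isomorphic to the corresponding universal division $E\ast N$-ring of fractions — here using that division closures inside $\DC_{E\ast G}$ of subrings of the form $E\ast N$ inherit the relevant universality, together with the fact (from \cref{lemm:tensor_vs_crossed_product2} and the localization-theoretic description) that universal localizations are compatible with the crossed-product filtration. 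The second key step is the linear independence itself: writing $H = N'\rtimes\ZZ$ for a suitable free normal subgroup $N'\le N$ if $N$ is itself non-trivial-free-by-cyclic, or arguing directly when $N$ is free, one reduces via the Malcev--Neumann / Ore structure of $\DC_{E\ast F}\ast\ZZ$ to the observation that a non-trivial $\DC_N$-linear relation among powers of $\tilde t$ would contradict the grading: the element $\tilde t$ acts as a non-trivial translation on the $\ZZ$-grading coming from $G\to\ZZ$ (or a compatible refinement), and $\DC_N$ sits in a bounded range of degrees, so distinct powers of $\tilde t$ land in distinct graded pieces. Concretely, I would use that $\DC_{E\ast F}\ast\ZZ$ is a genuine crossed product in which the powers of the indeterminate are left $\DC_{E\ast F}$-independent, pass to the Ore localization (which is faithfully flat), and track degrees.

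Once Hughes-freeness of $\DC_{E\ast G}$ is established, existence follows. For the final sentence of the statement, suppose a universal division $E\ast G$-ring of fractions $\DC'$ exists; then $\DC_{E\ast G}$, being a division $E\ast G$-ring of fractions, has its set of invertible matrices contained in that of $\DC'$, and conversely — but in fact one does not need two-sided comparison: Hughes' uniqueness theorem \cite{Hughes1970} says \emph{any} Hughes-free division $E\ast G$-ring of fractions is unique up to $E\ast G$-isomorphism, so it suffices to show $\DC'$ is also Hughes-free, or simply to note that since $\DC_{E\ast G}$ is Hughes-free and (by \cref{theo:main_pseudosylvester}) already universal, the two coincide by uniqueness of the universal object. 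The step I expect to be the main obstacle is the intrinsic identification of the division closures $\DC_N$: one must argue carefully that the division closure inside the Ore localization $\Ore(\DC_{E\ast F}\ast\ZZ)$ of the subring $E\ast N$ is genuinely the universal (equivalently Hughes-free) division $E\ast N$-ring of fractions and not something smaller, which requires combining the universality statement of \cref{theo:main_fir_crossed_ZZ}/\cref{theo:main_pseudosylvester} at the level of the subgroup $N$ with a compatibility check between the localization of $E\ast G$ and that of $E\ast N$; the degree/grading bookkeeping in the linear-independence step is routine by comparison.
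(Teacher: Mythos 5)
Your proposal diverges substantially from the paper's proof, and the route you choose has a concrete gap. The paper's argument is a short citation chain: $E\ast F$ admits a Hughes-free division ring of fractions because $F$ is free (Lewin, \cite{Lewin1974}*{Proposition~6}; \cite{Sanchez2008}*{Example~5.6(e)}), Hughes' Theorem~II (\cite{Sanchez2008}) then propagates existence through the extension $1\to F\to G\to \ZZ\to 1$, and the ``moreover'' clause is the cited universality of Hughes-free division rings (\cite{Sanchez2008}*{Example~6.19 \& Proposition~6.23} or \cite{JL2020}*{Corollary~8.2}). You instead try to verify Hughes-freeness of $\Ore(\DC_{E\ast F}\ast\ZZ)$ directly from the definition, which amounts to re-deriving the content of Hughes' theorems rather than invoking them.

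The concrete failure point is your linear-independence step. Hughes-freeness quantifies over \emph{every} non-trivial finitely generated subgroup $H\le G$, \emph{every} normal $N\trianglelefteq H$ with $H/N$ infinite cyclic, and \emph{every} $t\in H$ projecting to a generator --- not just the distinguished quotient $G\to\ZZ$. If $H$ is a non-trivial finitely generated subgroup of the free kernel $F$ (which is free, hence surjects onto $\ZZ$), then the relevant $t$ lies in $F$ and has degree zero with respect to the grading induced by $G\to\ZZ$; your argument that ``$\tilde t$ acts as a non-trivial translation on the $\ZZ$-grading \ldots and $\DC_N$ sits in a bounded range of degrees'' collapses entirely in this case, and there is no reason for $\DC_N$ to be bounded in degree when $N$ surjects onto a finite-index subgroup of $\ZZ$ either. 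The parenthetical ``or a compatible refinement'' is where all the work would have to happen, and no such refinement is exhibited. Similarly, the identification of the division closure $\DC_N$ inside $\Ore(\DC_{E\ast F}\ast\ZZ)$ with the Hughes-free division $E\ast N$-ring of fractions --- which you correctly flag as the main obstacle --- is precisely the non-trivial content of Hughes' Theorems~I and~II; deferring it without proof leaves the existence claim unestablished. (Your treatment of the final sentence, via the universality already proved in \cref{theo:main_pseudosylvester} plus Hughes' uniqueness, would be fine once Hughes-freeness of $\Ore(\DC_{E\ast F}\ast\ZZ)$ is actually known, and is a legitimate alternative to the paper's citation there.)
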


\begin{proof}
    Every crossed product of a division ring with a free group admits a Hughes-free division ring of fractions (cf.\ \cite{Lewin1974}*{Proposition~6}, \cite{Sanchez2008}*{Example~5.6(e)}). Therefore, we obtain by a result of Hughes (cf.\ \cite{Sanchez2008}*{Hughes' Theorem II}) that $E\ast G$ admits a Hughes-free division ring of fractions. The final statement can be proved either by applying \cite{Sanchez2008}*{Example~6.19 \& Proposition~6.23} to the subnormal series $1\trianglelefteq F\trianglelefteq G$ or using \cite{JL2020}*{Corollary 8.2}.
\end{proof}

Thus, in our particular setting, $\DC_{E\ast G}$ is Hughes-free, and we are going to use the uniqueness of the Hughes-free division ring of fractions to describe two concrete realizations of $\DC_{E\ast G}$.

\subsection{\texorpdfstring{$\DC_{E\ast G}$}{D\_E*G} and the Malcev--Neumann construction} \label{subsubsect:Malcev_Neumann}

Brodskii (\cite{Brodskii1984}) proved that the family of locally indicable groups coincides with the family of left orderable groups admitting a \emph{Conradian order}, i.e., a left order with the property that for any $e< g,h \in \Gamma$, we have $h< gh^2$ (cf.\ \cite{Sanchez2008}*{Proposition 2.31}).
Even for a general left order $\leq$ on a group $\Gamma$, one can consider the set $E((\Gamma,\leq))$ of formal power series
\[ x = \sum_{g\in \Gamma} \tilde{g}\lambda_g, \textrm{ with } \lambda_g\in E,
\]
whose support $\supp(x) = \{g\in \Gamma\mid \lambda_g\ne 0\}$ is well-ordered with respect to $\leq$.

Malcev (\cite{Malcev1948}) and Neumann (\cite{Neumann1949}) proved independently that, if $\leq$ is also compatible with the right multiplication, then the natural sum and product of series are well-defined, and $K((\Gamma,\leq))$ for a field $K$ is a division ring in which $K\Gamma$ embeds. In general, $E((\Gamma,\leq))$ is not a ring, but it is still a right $E$-vector space, and we can see $E\ast \Gamma$ as a subring of $\Endd(E((\Gamma,\leq)))$ by extending the (left) multiplication defined in $E\ast \Gamma$ (cf.\ \cite{Grater2019}*{Section~7} for a detailed explanation and further properties of this embedding).
For the intermediate case of a Conradian order, Gräter has recently provided a partial generalization of the Malcev--Neumann result assuming the existence of a Hughes-free division ring of fractions:

\begin{theo}[\cite{Grater2019}*{Theorem~8.1 \& Corollary~8.3}] \label{theo:dubrovin_hughes_free}
  Let $E$ be a division ring and $\Gamma$ a locally indicable group. If there exists a Hughes-free division $E\ast \Gamma$-ring of fractions, then it is isomorphic to the division closure of $E\ast \Gamma$ in $\Endd(E((\Gamma,\leq)))$, where $\leq$ is any Conradian order on $\Gamma$.
\end{theo}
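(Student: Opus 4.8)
The plan is to combine the uniqueness of Hughes-free division rings of fractions with a reconstruction of such a ring inside $\Endd(E((\Gamma,\leq)))$. Write $T=\Endd(E((\Gamma,\leq)))$ and let $\DC'$ be the division closure of $E\ast\Gamma$ in $T$; recall from the discussion preceding the statement that, $\leq$ being left-invariant, $E\ast\Gamma$ genuinely acts on the right $E$-vector space $E((\Gamma,\leq))$ and thereby embeds into $T$. We are assuming a Hughes-free division $E\ast\Gamma$-ring of fractions $\DC$ exists, and by Hughes' theorem (\cite{Hughes1970}) it is unique up to $E\ast\Gamma$-isomorphism, so it suffices to produce an $E\ast\Gamma$-isomorphism $\DC\cong\DC'$. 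First I would extend the $E\ast\Gamma$-action on $E((\Gamma,\leq))$ to a left $\DC$-module structure restricting to the given one on $E\ast\Gamma\subseteq\DC$, and then check that the resulting ring homomorphism $\DC\to T$ has image exactly $\DC'$.

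Granting the extended action, the rest is formal. The map $\DC\to T$ is nonzero, hence injective because $\DC$ is a division ring. Its image is a division ring containing $E\ast\Gamma$ which, moreover, is division closed in $T$, since the $T$-inverse of a nonzero element of the image coincides with its inverse inside the image. By minimality of the division closure, $\DC'\subseteq\im(\DC\to T)$. Conversely, since $\DC$ is generated by $E\ast\Gamma$ as a division ring, $\im(\DC\to T)$ is built up from $E\ast\Gamma$ by sums, products and inverses of nonzero elements, each of which stays inside the division-closed subring $\DC'$, so $\im(\DC\to T)\subseteq\DC'$. Hence $\DC'=\im(\DC\to T)\cong\DC$ as $E\ast\Gamma$-rings; in particular $\DC'$ is a division ring, which is the assertion.

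The whole difficulty therefore lies in extending the $E\ast\Gamma$-action on $E((\Gamma,\leq))$ to a compatible left $\DC$-action, and this is exactly where the hypothesis that $\leq$ is Conradian --- and not merely left-invariant --- is used. The approach would be an induction over the chain of convex subgroups of $(\Gamma,\leq)$: at a ``jump'' $C'\trianglelefteq C$ between consecutive convex subgroups, the structure theory of Conradian orders shows $C/C'$ is abelian and, with its induced order, order-isomorphic to a subgroup of $(\RR,+)$; writing such a subgroup as a directed union of finitely generated ones and inducting further on their rank, one reduces to extending an already-constructed $\DC_N$-action past a single quotient $H/N$ isomorphic to $\ZZ$ (here $\DC_N$ denotes the division closure of $E\ast N$). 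That step consists of prescribing the action of $\tilde t$ and $\tilde t^{-1}$, for $t$ mapping to a generator, on series whose support decomposes into $\langle t\rangle N$-cosets, and the consistency of the prescription is precisely the $\DC_N$-linear independence of the powers of $\tilde t$ in $\DC$, i.e.\ Hughes-freeness of $\DC$. Limit stages and infinitely generated convex subgroups are absorbed by a colimit/completeness argument, using that a well-ordered support, though possibly meeting infinitely many cosets, is sufficiently controlled by $\leq$. The main obstacle is to make this transfinite bookkeeping rigorous: to check that the partial prescriptions assemble into a well-defined endomorphism of the full series module with no convergence problems, and that the Hughes-free relations are exactly what is needed at each successor step. (This is in essence \cite{Grater2019}*{Section~8}; alternatively, one may skip the appeal to Hughes' theorem and instead verify Hughes-freeness of $\DC'$ directly through a leading-term computation on $E((\Gamma,\leq))$.)
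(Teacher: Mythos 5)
This statement is not proved in the paper at all: it is imported verbatim from Gr\"ater (\cite{Grater2019}*{Theorem~8.1 \& Corollary~8.3}), so there is no internal argument to compare against. Judged on its own terms, your proposal correctly identifies the right reduction --- by Hughes' uniqueness theorem it suffices to realize \emph{some} Hughes-free division $E\ast\Gamma$-ring of fractions inside $\Endd(E((\Gamma,\leq)))$ --- and the ``formal part'' of your second paragraph (the image of $\DC\to T$ is a division-closed division subring containing $E\ast\Gamma$, hence equals the division closure $\DC'$, and conversely $\phi^{-1}(\DC')$ is a division subring of $\DC$ containing $E\ast\Gamma$, hence all of $\DC$) is complete and correct.

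The genuine gap is exactly where you say it is, and it is not a small one: the construction of a left $\DC$-module structure on $E((\Gamma,\leq))$ extending the $E\ast\Gamma$-action (equivalently, the verification that $\DC'$ is a division ring and Hughes-free) is the entire mathematical content of the theorem, and your third paragraph only gestures at it. Two specific points are underdeveloped. First, at a successor step $N\trianglelefteq H$ with $H/N\cong\ZZ$ you invoke ``the $\DC_N$-linear independence of the powers of $\tilde t$ in $\DC$'' to get consistency, but the real difficulty is not consistency of a prescription on single cosets: a well-ordered support can meet infinitely many $\langle t\rangle N$-cosets, and one must show that the candidate action of a general element of $\DC_N(\!(t)\!)$-type sends well-ordered supports to well-ordered supports and that the resulting operators are genuinely invertible in $\Endd$, which is where the Conradian hypothesis (and not just left-invariance) does actual work. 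Second, the limit/colimit step over the possibly uncountable chain of convex subgroups is asserted to be ``absorbed by a completeness argument'' without saying what the directed system of partial module structures is or why it has a compatible colimit acting on the full series space. As written, the proposal is a plausible roadmap to Gr\"ater's Section~8 rather than a proof; to stand alone it would need either that construction carried out, or the alternative you mention in parentheses (a direct leading-term verification that $\DC'$ is a Hughes-free division ring of fractions) made precise.
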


Combined with \cref{theo:hughes_free_exists}, we obtain:
\begin{coro}
    The universal division $E\ast G$-ring of fractions $\DC_{E\ast G}$ can be realized as the division closure of $E\ast G$ in $\Endd(E((G,\leq)))$ for any Conradian order $\leq$ on $G$.
\end{coro}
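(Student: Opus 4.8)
The plan is to concatenate three facts already at our disposal: the existence of the universal division ring of fractions (\cref{theo:main_pseudosylvester}), its identification with the Hughes-free division ring of fractions (\cref{theo:hughes_free_exists}), and Gräter's realization of the latter inside the Malcev--Neumann endomorphism ring (\cref{theo:dubrovin_hughes_free}).

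First I would invoke \cref{theo:main_pseudosylvester}: since $G$ arises as an extension $1\to F\to G\to\ZZ\to 1$ with $F$ free, every crossed product $E\ast G$ is a pseudo-Sylvester domain and $\DC_{E\ast G}=\Ore(\DC_{E\ast F}\ast\ZZ)$ is its universal division $E\ast G$-ring of fractions; in particular it is an honest division $E\ast G$-ring of fractions. Next, \cref{theo:hughes_free_exists} produces a Hughes-free division $E\ast G$-ring of fractions and, because a universal one exists, asserts that the two agree up to $E\ast G$-isomorphism; hence $\DC_{E\ast G}$ is Hughes-free. Finally, $G$ is locally indicable (as $F$ and $\ZZ$ are and the class of locally indicable groups is closed under extensions, cf.\ the discussion preceding \cref{theo:hughes_free_exists}), so by Brodskii's theorem it carries a Conradian order, and any such order $\leq$ may be fixed. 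Applying \cref{theo:dubrovin_hughes_free} with $\Gamma=G$ identifies the Hughes-free division $E\ast G$-ring of fractions with the division closure of $E\ast G$ in $\Endd(E((G,\leq)))$. Composing the two $E\ast G$-isomorphisms gives the assertion, and since \cref{theo:dubrovin_hughes_free} allows $\leq$ to be arbitrary among Conradian orders on $G$, we obtain the realization for every such order.

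There is no real obstacle here: the proof is a formal chaining of cited statements. The only point requiring care is the category in which the isomorphisms live — both \cref{theo:hughes_free_exists} and \cref{theo:dubrovin_hughes_free} furnish isomorphisms of division $E\ast G$-rings of fractions, i.e.\ compatible with the structure maps from $E\ast G$, so the composite is again such an isomorphism and the identification of $\DC_{E\ast G}$ with the stated division closure is canonical in that sense.
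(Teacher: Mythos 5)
Your proof is correct and is exactly the paper's argument: the corollary is stated in the paper as an immediate combination of \cref{theo:hughes_free_exists} (existence of the Hughes-free division ring of fractions and its identification with the universal one supplied by \cref{theo:main_pseudosylvester}) with Gräter's realization in \cref{theo:dubrovin_hughes_free}. Your additional remark that all isomorphisms are isomorphisms of division $E\ast G$-rings of fractions, so the composite identification is canonical, is a sound (if implicit in the paper) point of care.
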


\subsection{\texorpdfstring{$\DC_{KG}$}{D\_KG} and the Atiyah conjecture} \label{subsubsect:Atiyah}

Let $\Gamma$ be a group and denote by $\ell^2(\Gamma)$ the $\ell^2$-Hilbert space with orthonormal basis $\Gamma$, i.e., the space whose elements are square-summable series $\sum_{g\in \Gamma} \lambda_g g$ with complex coefficients equipped with the standard $\ell^2$-scalar product.
The \emph{group von Neumann algebra} $\NC(\Gamma)$ is the algebra of bounded $\Gamma$-equivariant operators $T\colon\ell^2(\Gamma)\to\ell^2(\Gamma)$, where $\Gamma$ acts on $\ell^2(\Gamma)$ by left multiplication.
For every subfield $K$ of $\CC$, we can consider $K\Gamma$ as a subring of the group von Neumann algebra $\mathcal N(\Gamma)$ by identifying an element $a$ with the bounded $\Gamma$-equivariant operator $r_a\colon \ell^2(\Gamma)\to \ell^2(\Gamma)$ given by right multiplication by $a$.

The subset of (all) non-zero-divisors in the ring $\mathcal N(\Gamma)$ is right Ore, and the Ore localization, the algebra of unbounded affiliated operators $\mathcal U(\Gamma)$, is a von Neumann regular ring (cf.\ \cite{Luck2002}*{Theorem~8.22}).
The algebraic structure of the ring $\mathcal U(\Gamma)$ plays an important role for the strong Atiyah conjecture on the integrality of $L^2$-Betti numbers for torsion-free groups, which can equivalently be formulated as follows (cf.\ \cite{Luck2002}*{Lemma~10.39}):

\begin{defi}\label{conj:strong_Atiyah}
  A torsion-free group $\Gamma$ is said to satisfy the \emph{strong Atiyah conjecture} over the subfield $K$ of $\CC$ if the division closure $\DC(G;K)$ of $KG$ in $\mathcal U(G)$ is a division ring.
\end{defi}

There is no known example of a torsion-free group that does not satisfy the strong Atiyah conjecture. It is known to hold for locally indicable groups (\cite{JL2020}*{Theorem~1.1}), and the first proof covering the class of groups in \cref{theo:main_pseudosylvester} goes back to Linnell (\cite{Linnell1993}, see also \cite{Luck2002}*{Chapter~10}).

In Linnell's proof, Hughes-freeness was already used to identify the $\CC F$-rings $\DC(F;\CC)$ and $\DC_{\CC F}$, and the same arguments apply for any subfield $K$ of $\CC$. Using this, one can directly exhibit $\DC(G;K)$ as the Ore division ring of fractions of $\DC_{KF}\ast \ZZ$. Indeed, this crossed product can be built as a subring of $\UC(G)$, and hence, inasmuch as $\DC(G;K)$ is a division ring containing $\DC_{KF}\ast \ZZ$, it also contains the ring $\Ore(\DC_{KF} \ast \ZZ)$. Since the latter is a division subring containing $KG$, necessarily $\DC(G;K)=\Ore(\DC_{KF} \ast \ZZ)$. Using this, or alternatively  using \cite{JL2020}*{Corollary 6.3} together with \cref{theo:hughes_free_exists} we deduce the following.

\begin{coro}
    If $K$ is a subfield of $\CC$, then $\DC_{KG}$ agrees with the division closure $\DC(G;K)$ of $KG$ in $\mathcal U(G)$.
\end{coro}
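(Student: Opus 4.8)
The plan is to turn the abstract universal division ring $\DC_{KG}=\Ore(\DC_{KF}\ast\ZZ)$ produced by \cref{theo:main_pseudosylvester} into a concrete subring of the algebra of affiliated operators $\UC(G)$, and then to identify it with the division closure $\DC(G;K)$ by two inclusions. The indispensable external input is Linnell's theorem that the groups $G$ occurring here satisfy the strong Atiyah conjecture over $K$ (\cite{Linnell1993}, cf.\ \cite{Luck2002}*{Chapter~10}), so that $\DC(G;K)$ is genuinely a division ring; without it the statement is empty.

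First I would recall, following Linnell's argument, that the division closure $\DC(F;K)$ of $KF$ inside $\UC(F)\subseteq\UC(G)$ is a division ring and that Hughes-freeness identifies it with $\DC_{KF}$ as a division $KF$-ring of fractions. Next I would realize the crossed product $\DC_{KF}\ast\ZZ$ as a subring of $\UC(G)$: writing $KG=KF\ast\ZZ$ and fixing $\tilde t\in KG\subseteq\UC(G)$ lifting a generator of $\ZZ$, conjugation by $\tilde t$ is an automorphism of $\UC(G)$ restricting on $KF$ to the twist of the crossed product; since any automorphism carries a division-closed subring to a division-closed subring, it preserves $\DC(F;K)$, and by uniqueness of the extension of an automorphism from $KF$ to $\DC_{KF}$ (the argument in the proof of \cref{lemm:tensor_vs_crossed_product2}) it restricts there to the very automorphism used to form $\DC_{KF}\ast\ZZ$. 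Hence the subring of $\UC(G)$ generated by $\DC(F;K)$ and $\tilde t^{\pm 1}$ is a copy of $\DC_{KF}\ast\ZZ$, the needed $\DC(F;K)$-linear independence of the powers of $\tilde t$ being exactly Hughes-freeness with respect to $1\trianglelefteq F\trianglelefteq G$.

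With this model in hand the two inclusions are immediate. On the one hand $\DC(G;K)$ is division closed in $\UC(G)$ and contains $KG$, hence contains $\DC(F;K)$ and $\tilde t^{\pm 1}$, hence the above copy of $\DC_{KF}\ast\ZZ$; being a division ring it inverts every non-zero element of this domain, so by the universal property of the Ore localization the inclusion extends to a ring homomorphism $\Ore(\DC_{KF}\ast\ZZ)\to\DC(G;K)$, which is injective because the Ore localization of a domain embeds into any division ring it maps to. This realizes $\DC_{KG}$ as a sub-division-ring of $\DC(G;K)$ through $KG$. On the other hand a division ring is automatically division closed in any overring, so this copy of $\DC_{KG}$ is a division-closed subring of $\UC(G)$ containing $KG$, whence $\DC(G;K)\subseteq\DC_{KG}$ by minimality of the division closure; combining the two gives $\DC(G;K)=\DC_{KG}$. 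Alternatively one can bypass the explicit model: \cite{JL2020}*{Corollary~6.3} identifies $\DC(G;K)$ with the Hughes-free division $KG$-ring of fractions, which by \cref{theo:hughes_free_exists} is $\DC_{KG}$. The only point that requires care—mild, for a corollary—is the compatibility in the middle step, namely that conjugation by $\tilde t$ inside $\UC(G)$ reproduces the particular crossed product $\DC_{KF}\ast\ZZ$ of \cref{theo:main_fir_crossed_ZZ} rather than merely some crossed product; this is handled by the uniqueness of the automorphism extension together with the functoriality of the division closure under automorphisms of $\UC(G)$.
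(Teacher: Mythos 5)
Your proposal is correct and follows essentially the same route as the paper: identify $\DC(F;K)$ with $\DC_{KF}$ via Hughes-freeness, realize $\DC_{KF}\ast\ZZ$ as a subring of $\UC(G)$, deduce that the division ring $\DC(G;K)$ contains a copy of $\Ore(\DC_{KF}\ast\ZZ)=\DC_{KG}$, and conclude equality by minimality of the division closure; you even note the same alternative via \cite{JL2020}*{Corollary~6.3} and \cref{theo:hughes_free_exists} that the paper offers. Your treatment of the conjugation-by-$\tilde t$ step is somewhat more detailed than the paper's, which simply asserts that the crossed product can be built inside $\UC(G)$, but the substance is identical.
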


\begin{bibdiv}
\begin{biblist}

\bib{BLR2008}{article}{
    author = {Bartels, Arthur},
    author = {L\"{u}ck, Wolfgang},
    author = {Reich, Holger},
    title = {On the {F}arrell-{J}ones conjecture and its applications},
    journal = {J. Topol.},
    volume = {1},
    year = {2008},
    number = {1},
    pages = {57--86},
    note = {\doi{10.1112/jtopol/jtm008}},
}

\bib{BR2007}{article}{
    author = {Bartels, Arthur},
    author = {Reich, Holger},
     TITLE = {Coefficients for the {F}arrell-{J}ones conjecture},
   JOURNAL = {Adv. Math.},
    VOLUME = {209},
      YEAR = {2007},
    NUMBER = {1},
     PAGES = {337--362},
      note = {\doi{10.1016/j.aim.2006.05.005}},
}

\bib{Bass1968}{book}{
    AUTHOR = {Bass, Hyman},
     TITLE = {Algebraic {$K$}-theory},
 PUBLISHER = {W. A. Benjamin, Inc., New York-Amsterdam},
      YEAR = {1968},
     PAGES = {xx+762},
}

\bib{BFW2019}{article}{
  author = {Bestvina, M.},
  author = {Fujiwara, K.},
  author = {Wigglesworth, D.},
  title = {Farrell--Jones Conjecture for free-by-cyclic groups},
  year = {2019},
  note = {\arxiv{1906.00069v1}},
}

\bib{BKW2019}{article}{
  author = {Br\"uck, Benjamin},
  author = {Kielak, Dawid},
  author = {Wu, Xiaolei},
  title = {The Farrell--Jones Conjecture for normally poly-free groups},
  year = {2019},
  note = {\arxiv{1906.01360v2} (to appear in Proc. Amer. Math. Soc.)},
}

\bib{BL2020}{article}{
  author = {Bartels, Arthur},
  author = {L\"uck, Wolfgang},
  title = {Vanishing of Nil-terms and negative $K$-theory for additive categories},
  year = {2020},
  note = {\arxiv{2002.03412}},
}

\bib{Brodskii1984}{article}{
    author = {Brodskii, S. D.},
    title = {Equations over groups, and groups with one defining relation},
    journal = {Sib. Math. J.},
    volume = {25},
    year = {1984},
    number = {2},
    pages = {235--251},
    note = {\doi{10.1007/BF00971461}},
}

\bib{BZH2013}{book}{
  author = {Burde, Gerhard},
  author = {Zieschang, Heiner},
  author = {Heusener, Michael},
  title = {Knots},
  series = {De Gruyter Studies in Mathematics},
  volume = {5},
  edition = {Third edition},
  publisher = {De Gruyter, Berlin},
  year = {2013},
  pages = {xiv+417},
  note = {\doi{10.1515/9783110270785}},
}

\bib{Cohn2006}{book}{
  author = {Cohn, P. M.},
  title = {Free ideal rings and localization in general rings},
  series= {New Mathematical Monographs},
  volume = {3},
  publisher = {Cambridge University Press, Cambridge},
  year = {2006},
  note = {\doi{10.1017/CBO9780511542794}},
}

\bib{CS1982}{article}{
    AUTHOR = {Cohn, P. M.},
    AUTHOR = {Schofield, A. H.},
     TITLE = {On the law of nullity},
   JOURNAL = {Math. Proc. Cambridge Philos. Soc.},
    VOLUME = {91},
      YEAR = {1982},
    NUMBER = {3},
     PAGES = {357--374},
      ISSN = {0305-0041},
       DOI = {10.1017/S0305004100059430},
       URL = {https://doi.org/10.1017/S0305004100059430},
}

\bib{DS1978}{article}{
    author = {Dicks, Warren},
    author = {Sontag, Eduardo D.},
    title = {Sylvester domains},
    journal = {J. Pure Appl. Algebra},
    volume = {13},
    year = {1978},
    number = {3},
    pages = {243--275},
    note = {\doi{10.1016/0022-4049(78)90011-7}},
}

\bib{GW2004}{book}{
    author = {Goodearl, K. R.},
    author = {Warfield, R. B., Jr.},
    title = {An introduction to noncommutative Noetherian rings},
    series = {London Mathematical Society Student Texts},
    publisher = {Cambridge University Press, Cambridge},
    volume = {61},
    edition = {Second edition},
    year = {2004},
    note = {\doi{10.1017/CBO9780511841699}},
}

\bib{Grayson1985}{article}{
    AUTHOR = {Grayson, Daniel R.},
     TITLE = {The {$K$}-theory of semilinear endomorphisms},
   JOURNAL = {J. Algebra},
    VOLUME = {113},
      YEAR = {1988},
    NUMBER = {2},
     PAGES = {358--372},
      NOTE = {\doi{10.1016/0021-8693(88)90165-2}},
}

\bib{Grater2019}{article}{
    AUTHOR = {Gr\"{a}ter, Joachim},
     TITLE = {Free division rings of fractions of crossed products of groups
              with {C}onradian left-orders},
   JOURNAL = {Forum Math.},
    VOLUME = {32},
      YEAR = {2020},
    NUMBER = {3},
     PAGES = {739--772},
      NOTE = {\doi{10.1515/forum-2019-0264}},
}

\bib{HKS1972}{article}{,
    author = {Hoare, A. Howard M.},
    author = {Karrass, Abraham},
    author = {Solitar, Donald},
    title = {Subgroups of infinite index in {F}uchsian groups},
    journal = {Mathematische Zeitschrift},
    volume = {125},
    year = {1972},
    pages = {59--69},
    note = {\doi{10.1007/BF01111114}},
}

\bib{Hughes1970}{article}{
    author = {Hughes, Ian},
    title = {Division rings of fractions for group rings},
    journal = {Comm. Pure Appl. Math.},
    volume = {23},
    year = {1970},
    pages = {181--188},
    note = {\doi{10.1002/cpa.3160230205}},
}

\bib{Jaikin2019_survey}{article}{
    label = {Jai19a},
    author = {Jaikin-Zapirain, A.},
    title = {$L^2$-Betti numbers and their analogues in positive characteristic},
    book ={
     title = {Groups St Andrews 2017 in Birmingham},
     series = {London Math. Soc. Lecture Note Ser.},
     publisher = {Cambridge Univ. Press},
     volume = {455},
     year = {2019},
    },
    pages = {346--406},
    note = {\doi{10.1017/9781108692397.015}},
}

\bib{Jaikin2019_hughes}{article}{
 label = {Jai19b},
 author = {Jaikin-Zapirain, A.},
 title = {The universality of {H}ughes-free division rings},
 note = {Preprint available at \url{http://matematicas.uam.es/~andrei.jaikin/preprints/universal.pdf}},
 year = {2019},
}

\bib{Jaikin2019_sylvester}{article}{
  label = {Jai19c},
  author = {Jaikin-Zapirain, A.},
  title = {An explicit construction of the universal division ring of fractions of $E\langle\langle x_1,\dots,x_d\rangle\rangle$},
  year = {2019},
  note = {Preprint available at \url{http://matematicas.uam.es/~andrei.jaikin/preprints/sylvester.pdf} (to appear in J. Comb. Algebra)},
}

\bib{JL2020}{article}{
   label = {JL20},
   author= {Jaikin-Zapirain, A.},
   author = {L\'{o}pez-\'{A}lvarez, D.},
   title = {The strong {A}tiyah and {L}\"{u}ck approximation conjectures for one-relator groups},
   journal = {Math. Ann.},
   volume = {376},
   year = {2020},
   number = {3-4},
   pages = {1741--1793},
   note = {\doi{10.1007/s00208-019-01926-0}},
}

\bib{Lam1978}{book}{
    author = {Lam, T. Y.},
    title = {Serre's conjecture},
    publisher = {Springer-Verlag, Berlin--New York},
    series = {Lecture Notes in Mathematics, Vol. 635},
    year = {1978},
    note = {\doi{10.1007/BFb0068340}},
}

\bib{Lewin1969}{article}{
    author = {Lewin, Jacques},
    title = {Free modules over free algebras and free group algebras: {T}he
              {S}chreier technique},
    journal = {Trans. Amer. Math. Soc.},
    volume = {145},
    year = {1969},
    pages = {455--465},
    note = {\doi{10.2307/1995080}},
}

\bib{Lewin1974}{article}{
    author = {Lewin, Jacques},
    title = {Fields of fractions for group algebras of free groups},
    journal = {Trans. Amer. Math. Soc.},
    volume = {192},
    year = {1974},
    pages = {339--346},
    note = {\doi{10.2307/1996839}},
}

\bib{Lewin1982}{article}{
    author = {Lewin, Jacques},
    title = {Projective modules over group-algebras of torsion-free groups},
    journal = {Michigan Math. J.},
    volume= {29},
    year = {1982},
    number = {1},
    pages = {59--64},
    note = {\doi{10.1307/mmj/1029002614}},
}

\bib{Linnell1993}{article}{
    AUTHOR = {Linnell, P. A.},
     TITLE = {Division rings and group von {N}eumann algebras},
   JOURNAL = {Forum Math.},
    VOLUME = {5},
      YEAR = {1993},
    NUMBER = {6},
     PAGES = {561--576},
      ISSN = {0933-7741},
      NOTE = {\doi{10.1515/form.1993.5.561}},
}

\bib{LL2018}{article}{
    author = {Linnell, P. A.},
    author = {L\"{u}ck, W.},
    title = {Localization, {W}hitehead groups and the {A}tiyah conjecture},
    journal = {Ann. K-Theory},
    volume = {3},
    year = {2018},
    number = {1},
    pages = {33--53},
    note = {\doi{10.2140/akt.2018.3.33}},
}

\bib{LLS2003}{article}{
    label = {LLS03},
    author = {Linnell, P. A.},
    author = {L\"{u}ck, W.},
    author = {Schick, T.},
    title = {The Ore condition, affiliated operators, and the lamplighter group},
    book = {
        title = {High-dimensional manifold topology},
        publisher = {World Sci. Publ., River Edge, NJ},
        year = {2003},
    },
    pages = {315--321},
    note = {\doi{10.1142/9789812704443\_0013}},
}

\bib{Luck2002}{book}{
  label = {L\"{u}c02},
  author = {L\"{u}ck, W.},
  title = {$L^2$-invariants: theory and applications to geometry and $K$-theory},
  publisher = {Springer-Verlag, Berlin},
  series = {Ergebnisse der Mathematik und ihrer Grenzgebiete. 3. Folge},
  volume = {44},
  year = {2002},
  note = {\doi{10.1007/978-3-662-04687-6}},
}

\bib{Luck2019}{book}{
    label = {L\"{u}c19},
    author = {L\"{u}ck, W.},
    title = {Isomorphism conjectures in $K$- and $L$-theory},
    note = {draft available at \href{https://www.him.uni-bonn.de/lueck/data/ic.pdf} (version: May 3, 2019)}
}

\bib{Malcev1937}{article}{
    author = {Malcev, A. I.},
    title = {On the immersion of an algebraic ring into a field},
    journal = {Math. Ann.},
    volume = {113},
    year = {1937},
    number = {1},
    pages = {686--691},
    note = {\doi{10.1007/BF01571659}},
}

\bib{Malcev1948}{article}{
    author = {Malcev, A. I.},
    title = {On the embedding of group algebras in division algebras},
    journal = {Doklady Akad. Nauk SSSR (N.S.)},
    volume = {60},
    year = {1948},
    pages = {1499--1501},
}

\bib{MR2001}{book}{
    AUTHOR = {McConnell, J. C.},
    author = {Robson, J. C.},
     TITLE = {Noncommutative {N}oetherian rings},
    SERIES = {Graduate Studies in Mathematics},
    VOLUME = {30},
 PUBLISHER = {American Mathematical Society, Providence, RI},
      YEAR = {2001},
     PAGES = {xx+636},
      note = {\doi{10.1090/gsm/030}},
}

\bib{Neumann1949}{article}{
    author = {Neumann, B. H.},
    title = {On ordered division rings},
    journal = {Trans. Amer. Math. Soc.},
    volume = {66},
    year = {1949},
    pages = {202--252},
    note = {\doi{10.2307/1990552}},
}

\bib{OS1971}{article}{
    AUTHOR = {Ojanguren, M.},
    AUTHOR = {Sridharan, R.},
     TITLE = {Cancellation of {A}zumaya algebras},
   JOURNAL = {J. Algebra},
    VOLUME = {18},
      YEAR = {1971},
     PAGES = {501--505},
      ISSN = {0021-8693},
       DOI = {10.1016/0021-8693(71)90133-5},
       URL = {https://doi.org/10.1016/0021-8693(71)90133-5},
}

\bib{RV2018}{article}{
    label = {RV18},
    author = {Reich, Holger},
    author = {Varisco, Marco},
    title = {Algebraic {$K$}-theory, assembly maps, controlled algebra, and
              trace methods: A primer and a survey of the Farrell--Jones Conjecture},
    book = {
      title = {Space -- Time -- Matter: Analytic and Geometric Structures},
      publisher = {Berlin, Boston: De Gruyter},
      year = {2018},
    },
    pages = {1--50},
    note = {\doi{10.1515/9783110452150-001}},
}

\bib{Rotman2009}{book}{
  author = {Rotman, Joseph J.},
  title = {An introduction to homological algebra},
  series = {Universitext},
  edition = {Second edition},
  publisher = {Springer, New York},
  year = {2009},
  note = {\doi{10.1007/b98977}},
}

\bib{Sanchez2008}{thesis}{
 author = {S\'anchez, Javier},
 title = {On division rings and tilting modules},
 place = {Universitat Aut\`onoma de Barcelona},
 year = {2008},
 note = {Ph.D. thesis, available at \url{www.tdx.cat/bitstream/handle/10803/3107/jss1de1.pdf}},
}

\bib{Schofield1985}{book}{
    author = {Schofield, A.},
    title = {Representations of rings over skew fields},
    series = {London Mathematical Society Lecture Note Series},
    volume = {92},
    publisher = {Cambridge University Press, Cambridge},
    year = {1985},
    note = {\doi{10.1017/CBO9780511661914}},
}

\bib{Stafford1977}{article}{
    AUTHOR = {Stafford, J. T.},
     TITLE = {Stable structure of noncommutative {N}oetherian rings},
   JOURNAL = {J. Algebra},
    VOLUME = {47},
      YEAR = {1977},
    NUMBER = {2},
     PAGES = {244--267},
      ISSN = {0021-8693},
       DOI = {10.1016/0021-8693(77)90224-1},
       URL = {https://doi.org/10.1016/0021-8693(77)90224-1},
}

\bib{Stafford1977b}{article}{
    AUTHOR = {Stafford, J. T.},
     TITLE = {Weyl algebras are stably free},
   JOURNAL = {J. Algebra},
    VOLUME = {48},
      YEAR = {1977},
    NUMBER = {2},
     PAGES = {297--304},
      ISSN = {0021-8693},
       DOI = {10.1016/0021-8693(77)90308-8},
       URL = {https://doi.org/10.1016/0021-8693(77)90308-8},
}

\bib{Stafford1985}{article}{
    author = {Stafford, J. T.},
    title = {Stably free, projective right ideals},
    journal = {Compositio Math.},
    volume = {54},
    year = {1985},
    number = {1},
    pages = {63--78},
}

\bib{Swan1978}{article}{
 author = {Swan, Richard G.},
 journal = {Transactions of the American Mathematical Society},
 pages = {111--120},
 publisher = {American Mathematical Society},
 title = {Projective Modules over Laurent Polynomial Rings},
 volume = {237},
 year = {1978},
 note = {\doi{10.2307/1997613}},
}

\end{biblist}
\end{bibdiv}

\end{document}